\theoremstyle{plain}
\newtheorem{thm}{Theorem}[section]
\newtheorem*{starthm}{Theorem}
\newtheorem{cor}[thm]{Corollary}
\newtheorem{prop}[thm]{Proposition}
\newtheorem{lemma}[thm]{Lemma}
\newtheorem{definition}{Definition}
\newtheorem{remark}{Remark}[section]
\newcommand{\calc}{{\mathcal C}}
\newcommand{\calf}{{\mathcal F}}
\newcommand{\calm}{{\mathcal M}}
\newcommand{\cals}{{\mathcal S}}
\newcommand{\calv}{{\mathcal V}}
\newcommand{\calslao}{{\mathcal S_{\lambda}^0}}
\renewcommand{\AA}{{\mathbb A}}
\newcommand{\CC}{{\mathbb C}}
\newcommand{\DD}{{\mathbb D}}
\newcommand{\RR}{{\mathbb R}}
\newcommand{\ZZ}{{\mathbb Z}}
\newcommand{\chat}{{\widehat{\mathbb C}}}
\newcommand{\rmm}{{\rm M}}
\renewcommand{\hat}{\widehat}
\newcommand{\la}{\lambda}
\newcommand{\Log}{\mathrm{ Log}}
\begin{document}
\title[Two Asymptotic Values]{Slices of Parameter Space for Meromorphic Maps with Two Asymptotic Values}

\author{Tao Chen, Yunping Jiang, and Linda Keen}

\address{}
\email{}

\keywords{slices of parameter space, meromorphic functions, asymptotic value, shift locus, virtual center, virtual center parameter, transversality}

\thanks{This material is based upon work supported by the National Science Foundation. It is partially supported by a collaboration grant from the Simons Foundation (grant number 523341) and PSC-CUNY awards.  
}

\subjclass[2010]{Primary: 37F30, 37F20, 37F10; Secondary: 30F30, 30D30, 32A20}

\maketitle

\begin{abstract} This paper is part of a program to understand the parameter spaces of dynamical systems generated by meromorphic functions with finitely many singular values.  We give a full description of the parameter space for a specific family based on the exponential function that has precisely two finite asymptotic values and one attracting fixed point.  It represents a step beyond the previous work in \cite{GK} on degree 2 rational functions with analogous constraints: two critical values and an attracting fixed point.  What is interesting and promising for pushing the general program even further, is that, despite the presence of the essential singularity,  our new functions exhibit a dynamic structure as similar as one could hope to the rational case, and  that the philosophy of the techniques used in the rational case could be adapted.

\end{abstract}

\section{Introduction}
A general principle in complex dynamics is that singular values control the dynamical behavior.  There is now a long history of isolating interesting families of functions whose singular values can be parameterized in a way that allows one to understand how the dynamics varies across the family.  In practice, one constrains the number of singular values and the behavior of one or more of them--for example, by demanding that the orbit of one tends to an attracting fixed point.

This paper is  a step along the way to a general theory for meromorphic functions with finitely many singular values.  We adapt a technique developed by Douady, Hubbard and their students to study spaces of cubic polynomials, and used in \cite{GK} for rational maps of degree 2, in which the parameter space is modeled on the dynamic space of a fixed map in the family.  We will be looking at a family of meromorphic functions that are close enough to rational maps of degree 2 that there should be (and is) a direct similarity between the behaviors.  To put this in context, it helps to review some of the history.

The study of the parameter space for families of complex dynamical systems began with the family of quadratic polynomials.
They have one critical value whose behavior determines the dynamics and it is this behavior that is captured by the Mandelbrot set and its complement.   The next step was to study families with two  free critical values --- cubic polynomials and rational maps of degree two.  Moving out of the realm of rational functions and into that of  dynamics of transcendental functions, we see more substantial differences between entire and meromorphic functions than between polynomials and rationals.   Rational maps define  finite coverings of the plane, but transcendental maps define  infinite coverings. Moreover, while the poles of rational maps are no different from regular points,  the poles of (transcendental) meromorphic functions add a new flavor to the dynamics.   It turns out that there  are more similarities between the parameter space of   rational maps of degree 2 and that of the tangent family $\la \tan z$ than between  quadratic polynomials and the exponential family.    See e.g. \cite{DFJ,DK,FG,KK,RG,Sch}.   Is this similarity just good fortune, or is it suggestive of a more  general pattern of relationships with rational maps?

 Thanks to invariance under M\"obius transformations, in order to study rational maps of degree $2$, we can restrict our attention to maps of the form $(z+b+1/z)/\rho$ where $b \in \CC$ and $\rho \in \CC^*$.  This family is often called $Rat_2$ in the literature. These
functions fix infinity where the derivative (multiplier) is $\rho \neq 0$ and have {\em two  free} critical values, $(b \pm 2)/\rho$, rather than one as in the  quadratic  polynomial case.
Constraining  $\rho$ to lie in the punctured unit disk, $\DD^*$, makes infinity an attracting fixed point for all
values of $b$.  In the paper \cite{GK}, a structure theorem is proved for this family that is as close as one could hope to the earlier examples:

\medskip
\begin{starthm}[Structure Theorem for $Rat_2$]
Fix $\rho \in \DD^*$, and consider the family $(z+b+1/z)/\rho$ where $b \in \CC$. The $b$ plane is divided into three components by a bifurcation locus: two copies of the Mandelbrot set that meet at the origin and are symmetric about it, and a ``shift locus''.   For $b$ in either copy of the Mandelbrot set, one or the other critical value is attracted to infinity and the other is not.  In the shift locus, both critical values are attracted to infinity.
\end{starthm}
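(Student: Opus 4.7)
The plan is to stratify the $b$-plane by the escape behaviour of the two free critical values $c_\pm(b) = (b\pm 2)/\rho$ and then pin down each stratum. Set
\[
\calm_\pm := \{\,b \in \CC : c_\pm(b) \text{ is not attracted to } \infty\,\}, \qquad \cals := \CC \setminus (\calm_+ \cup \calm_-).
\]
The symmetry of the picture is immediate: the involution $\sigma(z) = -z$ conjugates $f_{b,\rho}$ to $f_{-b,\rho}$ and swaps $c_+$ with $c_-$, so $\calm_+ = -\calm_-$. A standard normal-family argument makes the condition ``$c_\pm(b) \in A_\infty(b)$'' open in $b$, so each $\calm_\pm$ is closed; since $|c_\pm(b)| \to \infty$ with $|b|$, each is also bounded.

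The central step is to identify each $\calm_\pm$ with a homeomorphic copy of the Mandelbrot set $\MM$ via Douady--Hubbard's polynomial-like mapping theorem. For $b_0 \in \calm_+$, where $c_-(b_0)$ escapes, I would construct topological disks $U_{b_0} \Subset V_{b_0}$ and an integer $n = n(b_0)$ so that
\[
f_{b_0,\rho}^{\,n} : U_{b_0} \longrightarrow V_{b_0}
\]
is a quadratic-like mapping with unique critical point $+1$. The disk $V_{b_0}$ is bounded by an equipotential of the Koenigs linearisation at $\infty$ together with external rays landing at a repelling periodic point that separates the orbits of $c_+$ and $c_-$, in the spirit of the puzzle constructions used for degree-$2$ rational maps. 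Holomorphic dependence on $b$ in a neighbourhood of $b_0$ yields a holomorphic family of quadratic-like mappings, and Douady--Hubbard straightening then delivers a continuous map $\chi_+ : \calm_+ \to \MM$. A quasiconformal surgery, gluing any given quadratic polynomial $z \mapsto z^2 + c$ with a model of the escaping dynamics of $c_-$ and restoring the result to the form $f_{b,\rho}$, supplies the inverse and upgrades $\chi_+$ to a homeomorphism.

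With Step~2 in hand the remaining claims are short. Symmetry $\calm_- = -\calm_+$ arranges the two copies as mirror images about $0$, and the $\sigma$-invariance of $f_{0,\rho}$ identifies $0$ as the natural meeting point of the two copies. A degree count precludes any further intersection: two disjoint quadratic-like restrictions of the same degree-$2$ rational map would inflate its topological degree past $2$. The shift locus $\cals$ is open by definition, and on it the Koenigs coordinate at $\infty$ yields two analytic escape functions $\varphi_\pm : \cals \to \DD^*$ whose induced marked fibration exhibits $\cals$ as a connected ``shift locus'' in the sense of Milnor.

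The main technical obstacle is Step~2. The puzzle pieces $U_{b_0} \Subset V_{b_0}$ must be produced continuously in $b$ across all of $\calm_+$, the combinatorics of the slicing rays must be controlled as $b$ traverses internal bifurcations, and the resulting holomorphic quadratic-like family must wind once around $\calm_+$ so that straightening surjects onto all of $\MM$ rather than a proper subset. The accompanying surgery supplying the inverse requires a compatible combinatorial bookkeeping of the orbit of the escaping critical value. Everything else --- symmetries, openness and boundedness of the strata, and connectedness of $\cals$ via the escape functions --- is routine once the polynomial-like structure is in place.
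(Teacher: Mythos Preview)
This theorem is not proved in the paper; it is quoted from \cite{GK} as background and motivation for the paper's own Main Structure Theorem about the meromorphic family $\calf_2$. There is therefore no proof in this paper to compare your proposal against.

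That said, the paper explicitly describes its proof of the analogous meromorphic result as an adaptation of the \cite{GK} technique, so one can infer the shape of the original argument, and it is \emph{not} polynomial-like mappings plus Douady--Hubbard straightening. The method is ``critical point surgery'' (attributed to Wittner): one fixes a model map $Q$ in a period-$1$ hyperbolic component, uses the K\"onigs coordinate at the attracting fixed point to build a conformal map $E$ from the shift locus into the attracting basin $K_0$ of $Q$ (with a dynamically defined disk removed), and then constructs an inverse for $E$ by an inductive direct-limit construction followed by quasiconformal surgery, the bookkeeping being done via the Teichm\"uller theory of the twice-punctured torus obtained as the quotient of the basin by the grand-orbit relation. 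The Mandelbrot copies then fall out as the complementary components of the resulting annular shift locus.

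Your route via quadratic-like restrictions is a plausible alternative strategy for producing copies of $\MM$, but as written it has real gaps. Step~2 is only a sketch: manufacturing puzzle pieces $U_b \Subset V_b$ uniformly over all of $\calm_+$, controlling the combinatorics through internal bifurcations, and checking that the straightening map has degree exactly one onto $\MM$ is substantial work you have not carried out. Your argument that $\calm_+ \cap \calm_-$ reduces to $\{0\}$ is also incorrect: the ``degree count'' fails because the two quadratic-like maps are restrictions of \emph{iterates} $f^n$, not of $f$ itself, so their coexistence does not push the topological degree of $f$ past $2$.
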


This paper looks at   the family of meromorphic functions whose members ``look like degree 2 rationals'':  they have two finite   omitted asymptotic values   $\lambda$ and $\mu$ and an attracting fixed point (in this case, at the origin)   with multiplier $\rho$:
\begin{equation}~\label{family}
f_{\la, \rho}(z) = \frac{e^z - e^{-z}}{\frac{e^z}{\la} - \frac{e^{-z}}{\mu}} \mbox{  where  } \frac{1}{\la} - \frac{1}{\mu}=\frac{2}{\rho}, \,\rho \in \mathbb{D}^*.
\end{equation}
We use $\calf_2$ to denote this family. Our main result is a structure theorem for the slice of the parameter space defined by a fixed $\rho \in \DD^*$, and those $\la$'s for which that $\rho$ has a corresponding $\mu$, namely $\la$ not equal to $0$ or $\rho/2$.  It is a direct analogue of the $Rat_2$ theorem:

\medskip
\begin{starthm}[Main Structure Theorem]~\label{main1}
For each $\rho \in \DD^*$,  the parameter slice, $\la \in \CC \setminus \{0, \rho/2 \}$ divides into three distinct regions: two copies of connected and full sets $\calm_{\la}$ and $\calm_{\mu}$ in which only one  of the asymptotic values  $\mu$  or $\la$ is attracted to the origin and a ``shift locus'' $\cals$ in which both asymptotic values are attracted to the origin. The shift locus $\cals$ is  conformally equivalent to a  punctured annulus.   The puncture is at the origin.  The other puncture of the parameter plane, $\rho/2$, is on the boundary of the shift locus.
\end{starthm}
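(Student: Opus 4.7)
The plan is to parallel the $Rat_2$ strategy of \cite{GK}, adapted for the transcendental setting. I would first establish basic dynamics: for every $\la$ in the slice, the origin is attracting with multiplier $\rho$, and the omitted asymptotic values $\la$ and $\mu = \rho\la/(\rho - 2\la)$ are the only singular values of $f_{\la,\rho}$. By the no-wandering-domains theorem for meromorphic functions with finitely many singular values together with Fatou's classification, every stable orbit is eventually absorbed by the basin $\calb_\la$ of $0$. This yields the trichotomy into $\calm_\la$, $\calm_\mu$, $\cals$ according to which asymptotic values lie in $\calb_\la$.

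To show $\calm_\la$ (resp.\ $\calm_\mu$) is connected and full I would use a polynomial-like mapping argument. For $\la \in \calm_\la$, $\mu$ is captured at $0$ and only $\la$ remains free. On a suitable puzzle piece $P_\la$ built from preimages of a Koenigs neighborhood of $0$ together with the asymptotic tract over $\mu$, an appropriate iterate of $f_{\la,\rho}$ restricts to a quadratic-like map whose single free asymptotic value is $\la$. Douady--Hubbard straightening then produces a holomorphic map $\calm_\la \to \calm$ to the Mandelbrot set, from which connectedness and fullness of $\calm_\la$ follow. The argument for $\calm_\mu$ is analogous with the roles of $\la$ and $\mu$ exchanged.

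For the shift locus I would adapt the Douady--Hubbard shift-locus construction. For $\la \in \cals$, let $k(\la), \ell(\la)$ be the least integers with $f_{\la,\rho}^{k(\la)}(\la), f_{\la,\rho}^{\ell(\la)}(\mu) \in U_\la$, and let $\phi_\la \colon U_\la \to \CC$ be the Koenigs coordinate normalized by $\phi_\la'(0)=1$. The natural model parameter is
\[
\Psi(\la) = \frac{\phi_\la\bigl(f_{\la,\rho}^{k(\la)}(\la)\bigr)}{\phi_\la\bigl(f_{\la,\rho}^{\ell(\la)}(\mu)\bigr)},
\]
taken modulo the $\langle z \mapsto \rho z\rangle$-action on $\CC^*$. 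I would then prove, in turn: (i) well-definedness across the codimension-one loci where $k(\la)$ or $\ell(\la)$ jump, using that a jump changes numerator or denominator by exactly the factor $\rho$ and is therefore absorbed by the quotient; (ii) holomorphy in $\la$; (iii) injectivity via a pullback / holomorphic motion argument; (iv) properness, giving $\Psi$ as a conformal isomorphism of $\cals$ onto a punctured annulus. Direct inspection of (\ref{family}) shows the family degenerates at $\la = 0$ (denominator blowup) and at $\la = \rho/2$ (where $1/\mu = 0$); the former is the interior puncture and the latter a boundary point of this annulus.

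The principal obstacle is the shift-locus construction. The essential singularity at $\infty$ forces $k(\la), \ell(\la)$ to be unbounded near the punctures, so infinitely many locally-constant strata must be sewn together coherently by the $\rho$-quotient. Controlling the Koenigs coordinate $\phi_\la$ holomorphically in $\la$ across these strata, organizing the combinatorics of first-landing times of the two asymptotic values, and proving that the resulting global map is a conformal isomorphism (rather than a nontrivial covering of an annulus) constitutes the central technical content and is what distinguishes this proof from its rational predecessor.
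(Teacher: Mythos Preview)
Your plan has two genuine gaps.

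First, the polynomial-like argument for $\calm_\la$ cannot work as stated. The maps $f_{\la,\rho}$ have no critical points; their singular values are \emph{omitted} asymptotic values. No restriction of an iterate to a bounded domain is quadratic-like in the Douady--Hubbard sense, since a quadratic-like map must be proper of degree $2$ with an interior critical point. Indeed the hyperbolic components of $\calm_\la$ are \emph{shell components}: each is a universal cover of $\DD^*$ with no center (the multiplier of the attracting cycle never vanishes), so there is no straightening map to the Mandelbrot set $\calm$. The paper does not prove connectedness and fullness of $\calm_\la$ directly; they fall out as corollaries once the shift locus is shown to be an annulus.

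Second, your shift-locus coordinate $\Psi$ naturally lands in the torus $\CC^*/\langle \rho\rangle$, not in an annulus, and you give no mechanism forcing the image to be annular. The paper handles this by first splitting $\cals = \cals_\la \cup \cals_* \cup \cals_\mu$ according to which asymptotic value lies on the boundary of the maximal injectivity domain $O_\la$ of the linearizing map $\phi_\la$. On $\cals_\la^0$ one normalizes $\phi_\la(\mu)$ to a fixed value and records $\phi_\la(\la)$, pulled back into the dynamical plane $K_0$ of a fixed model map $Q=f_{\la_0}$ chosen in the period-$1$ shell component with $Q'(q_0)=\rho$; this gives an injective holomorphic map $E:\cals_\la^0 \to K_0\setminus\Delta$, a genuine topological annulus. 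The inverse of $E$ is built by a Wittner/Goldberg--Keen surgery: given $p\in K_0\setminus\Delta$, one inductively constructs infinite-degree coverings $(U_n,Q_n)$ with omitted values $\la_0$ and $p$, passes to a direct limit $(U_\infty,Q_\infty)$, and then uses the Teichm\"uller theory of the quotient twice-punctured torus together with Nevanlinna's theorem to realize $Q_\infty$ as some $f_{\la(p)}\in\cals_\la^0$. The inversion $I(\la)=-\mu$ carries $\cals_\la$ to $\cals_\mu$, and gluing along $\cals_*$ gives the punctured annulus. Your ratio $\Psi$ captures the torus quotient but loses the ordering (which asymptotic value is ``inner''), and ``pullback/holomorphic motion'' is not a substitute for the surgery and Teichm\"uller-theoretic construction of the inverse.
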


We are able to give a description of the sets $\calm_{\la}$ and $\calm_{\mu}$.  Like the Mandelbrot set in $Rat_2$, $\calm_{\la}$ and $\calm_{\mu}$ contain hyperbolic components in which one or the other of the asymptotic values tends to a non-zero attracting cycle.  Within each component the functions are quasiconformally conjugate.  Components like this were first studied in \cite{KK} where they occur in the parameter plane of the tangent family $\la \tan z$.  There, and in other  later work,  (see \cite{KK, FK, CK}), it was proved that each component is a universal cover of $\DD^*$;  based on the computer pictures, these components were called {\em shell components}. Thus, unlike the Mandelbrot set, the hyperbolic components do not contain a ``center'' where the  periodic cycle contains the critical value and has multiplier zero.  Instead, they contain a distinguished boundary point with the property that as the parameter approaches this point, the limit of the multiplier of the periodic cycle attracting the asymptotic value is zero. It is thus called a ``virtual center''.

 Like the characterization of centers of the components of the Mandelbrot set in terms of the sequence of inverse branches that keep the critical value fixed, a virtual center $\la^*$ can also be characterized by the property that  there is some $n$ such that $f_{\la^*}^{n-1}(\la^*) = \infty$ or $f_{\la^*}^{n-1}(\mu(\la^*,\rho)) = \infty$; the point is thus also called a ``virtual cycle parameter of order $n$''.   In this paper we give a complete combinatorial description of the virtual cycle parameters:

\medskip
\begin{starthm}[Combinatorial Structure Theorem]~\label{main2}
 The virtual cycle parameters $\lambda_{{\mathbf k}_{n}}$ of order $n$ can all be labelled by  sequences ${\mathbf k}_n= k_{n-1} \ldots k_1$,
where $k_i \in \ZZ$, in such a way that each of the parameters $\lambda_{{\mathbf k}_{n}}$ is an accumulation point in $\CC$
of a sequence of parameters $\lambda_{{\mathbf k}_{n+1}} $ of order $n+1$ and  related to ${{\mathbf k}_{n}}$; that is, ${\mathbf  k}_{n+1} =k_{n-1} \ldots k_1k_{0,j} $, $j\in \ZZ$.
This combinatorial description  of the virtual cycle parameters determines combinatorial descriptions of the sets $\calm_{\la}$ and $\calm_{\mu}$. 
\end{starthm}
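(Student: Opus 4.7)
The plan is induction on the order $n$, using the fact that every $\ZZ$-label appearing in the statement traces back to a single source: the $2\pi i$-periodic vertical string of poles of $f_{\la,\rho}$. Setting the denominator of (\ref{family}) equal to zero gives the poles $z_{k}=\tfrac12\Log(\la/\mu)+k\pi i$, $k\in\ZZ$, a canonical $\ZZ$-labeled arithmetic progression, while $f_{\la,\rho}$ omits the two asymptotic values $\la$ (attained as $\Re z\to+\infty$) and $\mu$ (as $\Re z\to-\infty$); both features will be used repeatedly.

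For the base case $n=2$, a virtual cycle parameter of order $2$ satisfies $f_{\la,\rho}(\la)=\infty$, i.e.\ $\la$ is one of its own poles. Using the relation $\mu=\la\rho/(\rho-2\la)$, the equation $\la=z_{k_{1}}$ becomes transcendental in $\la$ alone, and an argument-principle count on the horizontal strip $\{\Im\la\in((k_{1}-\tfrac12)\pi,(k_{1}+\tfrac12)\pi)\}$ produces exactly one solution $\la_{k_{1}}$ per strip, yielding the $\ZZ$-labeling $\mathbf{k}_{2}=k_{1}$ (symmetrically for the $\mu$-branch).

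For the inductive step, given an order-$n$ parameter $\la^{*}=\la_{\mathbf{k}_{n}}$, I would study the parameter-space iterate
\[
\Phi_{n}(\la) := f_{\la,\rho}^{\,n-1}\bigl(\nu(\la)\bigr),
\]
where $\nu(\la)\in\{\la,\mu(\la)\}$ is the asymptotic value whose orbit lands at $\infty$ at $\la^{*}$, so $\Phi_{n}(\la^{*})=\infty$. The crucial claim is that near $\la^{*}$ the map $\Phi_{n}$ has a logarithmic singularity over $\infty$ with the same asymptotic-value structure as $f_{\la,\rho}$: the first $n-2$ iterates are holomorphic near $\la^{*}$, and the simple pole at the $(n-1)$-st step, after a conformal change of coordinate, reduces $\Phi_{n}$ to an exponential-type model whose range covers every neighborhood of $\infty$ and whose only omitted values are the parameter-deformations of $\la$ and $\mu$. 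Solving $f_{\la,\rho}\circ\Phi_{n}(\la)=\infty$, i.e.\ pulling back each pole $z_{k_{0,j}}$ through this model, then produces a $\ZZ$-indexed family of order-$(n+1)$ parameters $\la_{\mathbf{k}_{n+1}}$, $\mathbf{k}_{n+1}=k_{n-1}\ldots k_{1}k_{0,j}$, $j\in\ZZ$, accumulating at $\la^{*}$. The combinatorial description of $\calm_{\la}$ and $\calm_{\mu}$ then follows by combining this labeling with the Main Structure Theorem and the shell-component dichotomy from \cite{KK,FK,CK}: each virtual cycle parameter is the virtual center of a unique shell component, so the tree of sequences $\mathbf{k}_{n}$ indexes the tree of shell components inside $\calm_{\la}\cup\calm_{\mu}$, and the complement is described by the Main Structure Theorem.

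The main obstacle is the local analysis of $\Phi_{n}$ at $\la^{*}$. One must verify not only that $\Phi_{n}$ has an essential-type singularity at $\la^{*}$ but that it has a genuine logarithmic tract over $\infty$ whose omitted values are exactly the two asymptotic values $\la,\mu$, so that the preimages of each pole $z_{k_{0,j}}$ form a single $\ZZ$-orbit and the resulting labels remain distinct and consistent as $n$ grows. This requires transferring Iversen-type asymptotic-value arguments from the dynamic plane to the parameter plane, uniformly in $n$, while using the hypothesis $\rho\in\DD^{*}$ to keep the multiplier of the attracting fixed point at the origin bounded away from the unit circle so that the local normal forms stay under control.
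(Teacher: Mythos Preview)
Your inductive framework is natural, but the central claim of the inductive step is wrong. You assert that $\Phi_n(\la)=f_{\la,\rho}^{\,n-1}(\nu(\la))$ has ``an essential-type singularity at $\la^*$'' with ``a genuine logarithmic tract over $\infty$'' and omitted values. It does not: the first $n-2$ iterates are holomorphic near $\la^*$, and $f_\la^{\,n-2}(\nu(\la))$ lands on a \emph{simple pole} of $f_\la$, so composing with the last $f_\la$ gives $\Phi_n$ a \emph{pole} at $\la^*$, not a logarithmic singularity. A pole is a local conformal isomorphism onto a full neighborhood of $\infty$; nothing is omitted, there is no Iversen-type structure to transfer from the dynamical plane, and your ``exponential-type model'' does not exist --- you are conflating the asymptotic-value structure of $z\mapsto f_\la(z)$ with the behavior of the parameter map $\la\mapsto\Phi_n(\la)$. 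The paper exploits precisely this pole: with $\hat h(\la)=1/\Phi_n(\la)$ and $\hat p_k(\la)=1/p_k(\la)$ holomorphic on a small disc $V\ni\la^*$, one has $\hat h(\la^*)=0$ and $\hat p_k\to 0$ uniformly on $\overline V$ (since $p_k(\la)=\tfrac12\Log((\rho-2\la)/\rho)+ik\pi\to\infty$ as $|k|\to\infty$), so Rouch\'e's theorem gives a zero of $\hat h-\hat p_k$ in $V$ for each large $|k|$, i.e.\ an order-$(n+1)$ parameter with last symbol $k$ (Corollary~\ref{pre-pole centers} and Proposition~\ref{labels acc}). No tract analysis or uniform multiplier control is needed.

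There is also a global issue you do not address. The label $k_j$ attached to an inverse branch is only well defined if the pole functions $p_k(\la)$ --- which involve $\Log((\rho-2\la)/\rho)$ --- do not undergo monodromy as $\la$ varies over the relevant parameter region. The paper secures this via Lemma~\ref{invwelldef}, which produces a simply connected domain $\Sigma\supset\calm_\la$ (avoiding $\calm_\mu$) on which all pole functions and inverse branches $g_{\la,k}$ are single-valued; this in turn relies on the topology of the shift locus established in Section~\ref{Shift}. Your horizontal-strip argument-principle count in the base case and your purely local inductive step provide no such global coherence, so without an analogue of $\Sigma$ the itinerary $\mathbf k_n$ is defined only up to a shift ambiguity, and the claimed bijection between itineraries and shell components in $\calm_\la$ and $\calm_\mu$ is not established.
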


 In~\cite{ CJK19}, we proved a ``transversality theorem'' for functions in the tangent family. Combining the techniques in the proof of this theorem with the results here,  we prove

 \medskip
\begin{starthm}[Common Boundary Theorem]~\label{main3}
Every virtual cycle parameter is both a boundary point of a shell component and a boundary point of the shift locus.
Furthermore,  the dynamics of the family $\{f_{\la}\}$ is transversal at each of these virtual cycle parameters (see Definition~\ref{trans} and Remark~\ref{tranrmk}).
And even further,  the set of all virtual cycle parameters is dense in the common boundary of the shift locus $\cals$ and the sets $\calm_{\la}\cup \calm_{\mu}$.
\end{starthm}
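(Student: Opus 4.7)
The plan is to split the statement into three assertions (boundary, transversality, density) and handle them in order. Fix a virtual cycle parameter $\las$ of order $n$, and without loss of generality assume $f_{\las}^{n-1}(\las)=\infty$; then the other asymptotic value $\mu^*=\mu(\las,\rho)$ is attracted to $0$. For $\la$ near $\las$ the $(n-1)$st iterate $z_\la:=f_\la^{n-1}(\la)$ is near $\infty$, and $f_\la(z_\la)$ lies close to one of the two asymptotic values depending on which asymptotic tract carries $z_\la$. In the tract leading back near $\la$ itself, the implicit function theorem applied to $f_\la^n(\la)=\la$ yields a family of exact $n$-cycles whose multiplier is small, and the standard shell-component theory (cf.~\cite{KK, FK, CK}) identifies this family as a hyperbolic component of $\calm_\la$ with $\las$ as its virtual center. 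In the other tract, $f_\la(z_\la)$ is close to $\mu(\la,\rho)$, which is attracted to $0$ by continuity from $\mu^*$; hence the orbit of $\la$ also enters the basin of $0$, placing such $\la$ in $\cals$. Since both behaviors occur in every punctured neighborhood of $\las$, $\las$ is a common boundary point.

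Transversality follows by adapting the argument of~\cite{CJK19}. The condition $f_{\las}^{n-1}(\las)=\infty$ has no classical derivative, but composing with the logarithmic uniformization of the asymptotic tract near $\infty$ turns it into a genuine holomorphic equation $G_n(\la)=0$. The $\la$-derivative of $G_n$ at $\las$ is then computed via the chain rule along the finite orbit $\la, f_\la(\la), \ldots, f_\la^{n-2}(\la)$, with an additional contribution from $d\mu/d\la=-\mu^2/\la^2$ read off from~\eqref{family}. Because the essential singularity and the two asymptotic tracts have the same exponential form as in the tangent family, the CJK19 computation transports essentially verbatim to show $G_n'(\las)\neq 0$.

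For density of the virtual cycle parameters in the common boundary $\partial\cals\cap\partial(\calm_\la\cup\calm_\mu)$, fix $\la_0$ in this set. Since $\la_0\in\partial(\calm_\la\cup\calm_\mu)$, shell components cluster at $\la_0$; since $\la_0\in\partial\cals$ as well, these components must shrink in diameter, forcing the order of their virtual centers to grow without bound. By the Combinatorial Structure Theorem the virtual cycle parameters form a tree in which order $(n+1)$ parameters accumulate at order $n$ ones, and an induction along this tree extracts a subsequence of virtual centers converging to $\la_0$. The main obstacle in the whole argument is the transversality step: the essential singularity makes naive derivatives infinite, so the uniformization of the asymptotic tracts is required both to make the parameter derivative meaningful and to power the implicit function argument in paragraph one; without it, neither the shell-component nor the nonvanishing-derivative conclusion is accessible.
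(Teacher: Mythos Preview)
Your boundary and transversality arguments are broadly in the right spirit and, like the paper, ultimately defer the hard work to the shell-component theory of \cite{FK,CK} and the transversality computation of \cite{CJK19}. Two remarks, though. First, the sentence ``the implicit function theorem applied to $f_\la^n(\la)=\la$ yields a family of exact $n$-cycles'' is misstated: this is a single equation in the single variable $\la$, and even where it holds it would put $\la$ \emph{on} the cycle rather than in its basin. What you need is to solve $f_\la^n(z)=z$ for $z$ near the virtual cycle, as a function of $\la$, and check the multiplier is small---which is exactly the shell-component result you then cite. Second, the paper's argument for the $\cals$ side is simpler than your tract analysis: it observes that $\la\mapsto f_\la^{n-1}(\la)$ is holomorphic with value $\infty$ at $\la^*$, that $\infty$ lies on $\partial A_\la$ for every $\la$, and then invokes the open mapping theorem to find nearby $\la$ with $f_\la^{n-1}(\la)\in A_\la$.

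The density argument, however, has a genuine gap. Your first step, ``shell components cluster at $\la_0$,'' is not justified: a boundary point of $\calm_\la\cup\calm_\mu$ need not a priori be approached by \emph{hyperbolic} shell components, since $\calm_\la$ and $\calm_\mu$ also contain non-hyperbolic parameters. The subsequent claims (components shrink, orders grow, the tree structure yields a convergent subsequence) are all unproved and do not obviously follow from the Combinatorial Structure Theorem. The paper's approach is entirely different and much cleaner: it shows $\partial\mathcal{J}\subset\overline{\mathcal{B}_{cv}}$ via the generalized Montel theorem with moving targets (Theorem~\ref{generalMontel}). Concretely, if no virtual cycle parameter lay in a neighborhood $U$ of $\la_0$, then $f_\la^n(\la)\neq p_k(\la)$ for all $n,k$ and all $\la\in U$; since the pole functions $p_k(\la)$ are holomorphic with disjoint ranges, Montel forces $\{f_\la^n(\la)\}$ to be normal on $U$, contradicting $\la_0\in\partial\mathcal{J}$. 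This is the key idea your argument is missing.
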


The paper is broken down into two parts.   Part 1 provides the background we need and some of the basic facts about the dynamical systems for functions in $\calf_2$.  Part 2 contains the main results of the paper.

We begin by quickly reviewing the basic definitions and facts we need about the dynamics of meromorphic functions and a theorem of Nevanlinna's, theorem~\ref {Nev}, that characterizes the functions we work with in terms of their Schwarzian derivatives.  We next take a detailed look at $\calf_2$.  In particular, in \ref{Julia set dichotomy} we show that  there is a dichotomy in the  dynamics in this family analogous to that for quadratic polynomials: either the Julia set is a Cantor set or there is a connected ``filled Julia set''  analogous to the filled Julia set of a quadratic polynomial.  
 
 Part 2 begins with the description of the sets $\calm_{\la}$ and $\calm_{\mu}$ from the Main Structure Theorem and gives the definitions of virtual cycle parameters and virtual centers.     The combinatorial description of the virtual cycle parameters,  the Combinatorial Structure Theorem,  is given in section~\ref{combinatorics}.  Pictures of the parameter plane follow and the rest of the paper contains the  the proof of the Common Boundary  Theorem, which leads to  a detailed discussion of the shift locus and the rest of Main Structure Theorem.
 
We would like to thank the reviewer for his or her careful reading  on the first version of this paper.  We have taken the comments into account in this version and it is a real improvement.

 \part{Background}
\section{Basic Dynamics}
\label{Basic Dynamics}
 Here we give the basic definitions, concepts and notations we will use.  When we say a function is meromorphic we mean that it is transcendental meromorphic.
We refer the reader to standard sources on meromorphic dynamics for details and proofs.  See e.g. \cite{Berg, BF,BKL1,BKL2,BKL3,BKL4,DK,KK}.

 We denote the complex plane by $\CC$, the Riemann sphere by $\hat\CC$ and the unit disk by $\DD$.  We denote the punctured plane by $\CC^* = \CC \setminus \{  0 \}$ and the punctured disk by $\DD^* = \DD \setminus \{  0 \}$.

\medskip
Given  a family of meromorphic functions, $\{ f_{\la}(z) \}$,  we look at the orbits of points formed by iterating the function $f(z)=f_{\la}(z)$.   If $f^k(z)=\infty$ for some $k>0$, $z$ is called a pre-pole of order $k$ --- a pole is a pre-pole of order $1$.  For meromorphic functions, the poles and pre-poles have  finite orbits that end at infinity.  The {\em Fatou set or Stable set, $F_f$}, consists of those points at which the iterates  $\{f_{\lambda}^{n}\}_{n=0}^{\infty}$  are well-defined and form a normal family in a neighborhood of each of them.   The Julia set $J_f$ is the complement of the Fatou set and  contains infinity as well as all the poles and pre-poles.  

\medskip
A point $z$ such that $f^n(z)=z$ is called {\em periodic}.  The minimal such $n>0$ is called the period. Periodic points are classified by their multipliers, $\nu(z)=(f^n)'(z)$ where $n$ is the period: they are repelling if $|\nu(z)|>1$, attracting if $0< |\nu(z)| < 1$,   super-attracting  if $\nu=0$ and neutral otherwise.  A neutral periodic point is {\em parabolic} if $\nu(z)=e^{2\pi i p/q}$ for some rational $p/q$.  The Julia set is the closure of the repelling periodic points.  For meromorphic $f$, it is also the closure of the pre-poles, (see e.g. \cite{BKL1}).

\medskip
If $D$ is a component of the Fatou set,  either $f^n(D) \subseteq f^m(D)$ for some integers $n,m$ or $f^n(D) \cap f^m(D) = \emptyset$ for all pairs of integers $m \neq n$.  In the first case $D$ is called {\em eventually periodic} and in the latter case it is called {\em wandering}.   The   periodic cycles of stable domains are classified as follows:
\begin{itemize}
\item Attracting or super attracting if the periodic cycle of domains contains an attracting or superattracting cycle in its interior.
\item Parabolic if there is a parabolic periodic cycle on its boundary.
\item Rotation if $f^n: D \rightarrow D$ is holomorphically conjugate to a rotation map.  Rotation domains are either simply connected or topological annuli.  These are called {\em Siegel disks and Herman rings} respectively.
\item Essentially parabolic, or Baker, if there is a point $z_{\infty} \in \partial D$ such that  $f^{n} (z_{\infty})$ is not well defined and for every $z \in D$,  $\lim_{k \to \infty} f^{nk}(z) = z_{\infty}.$
\end{itemize}

\medskip

A point $a$ is a {\em singular value} of $f$ if $f$ is not a regular covering map over $a$.
\begin{itemize}
\item    $a$ is a {\em critical value} if for some $z$, $f'(z)=0$ and $f(z)=a$.
\item    $a$ is an {\em asymptotic value}  if there is a path $\gamma(t)$, called an {\em asymptotic path},  such that $\lim_{t \to \infty} \gamma(t) = \infty$ and $\lim_{t \to \infty} f(\gamma(t))=a$.
\item The {\em set of singular values $S_f$} consists of the closure of the critical values and the asymptotic values.  The {\em post-singular set is
\[P_f= \overline{\cup_{a \in S_f} \cup_{n=0}^\infty f^n(a)  \cup \{\infty\}}. \]}
For notational simplicity, if a pre-pole $s$ of order $p$ is a singular value, $\cup_{n=0}^{p} f^n(s)$ is a finite set with $f^{p} (s)=\infty$.
\end{itemize}

\medskip
\begin{definition}\label{asymptract} If an asymptotic value $a$ is isolated, it has neighborhoods $U$ such that for at least one unbounded simply connected component $V$ of $f^{-1}(U\setminus \{a\})$, $f:V \rightarrow U\setminus \{a\}$ is a universal covering map and we call $V$ an {\em asymptotic tract for $a$}.  If $V_1$ and $V_2$ are asymptotic tracts for $a$, and $f: V_1 \cap V_2 \rightarrow U\setminus \{a\}$ is a universal covering map, we say $V_1$ and $V_2$ are equivalent.  The {\em multiplicity of the asymptotic value $a$} is the number of distinct equivalence classes of asymptotic tracts of $a$.  An asymptotic value is called simple if its multiplicity is $1$.
\end{definition}  

\medskip
Another important concept is
\begin{definition}
A map $f$ is {\em hyperbolic} if $J_f \cap {P_f} = \emptyset$.
\end{definition}

In rational dynamics, a map is hyperbolic if it satisfies an expansion property on its Julia set;  that is, (see \cite{Mil}),  there exist constants $c>0$ and $K>1$ such that for all $z$ in a neighborhood $V \supset J(f)$,   $|(f^{n})'(z)|> c K^n$.  For such maps, 
this property is equivalent to the condition that $J_f \cap {P_f} = \emptyset$.  

Because the Julia set of a meromorphic function is unbounded and its iterates have singularities at the prepoles we need a version of this condition tailored to transcendental maps.  
 We use   the following one proved in \cite{RS} which applies to hyperbolic functions in $\calf_2$.

 \begin{prop}[Rippon-Stallard] \label{RS}   If $S(f)$ is bounded and $\overline{PS(f)} \cap J(f) = \emptyset$,   then there exist two constants  $c>0$ and $K>1$ satisfying 
 \begin{equation}\label{Ripponstal}
 |(f^n)'(z) | > c K^n(|f^n(z)|+1)/(|z|+1).
 \end{equation}
 for all $z \in J(f) \setminus A_n(f)$ and all $n$ where $A_n(f)$ is the set of points where $f^n$ is not analytic (prepoles of lower order).  
 \end{prop}

\medskip
A standard result in dynamics is that each attracting, super-attracting, parabolic or Baker cycle of domains contains a singular value. Moreover, unless the cycle is superattracting, the orbit of the singular value is infinite and accumulates on the cycle, or  the orbit of $z_{\infty}$ associated with the Baker domain. The boundary of each rotation domain is contained in  the post singular set. (See e.g.~\cite{Mil}, chap 8-11 or~\cite{Berg}, Sect.4.3.)

\medskip
\subsection{Nevanlinna's theorem }\label{nev thm}

 Recall that the Schwarzian derivative is defined by
\[  S(f) = \big(\frac{f''}{f'} \big)' -\frac{1}{2} \big(\frac{f''}{f'} \big)^2.  \]
It satisfies the cocycle relation
\[  S(f \circ g) = S(f)( g')^2 + S(g).  \]
Since the Schwarzian derivative of a M\"obius transformations is zero,  solutions to the Schwarzian differential equation $S(f)(z)=P(z)$ are unique up to post-composition by a M\"obius transformation. See~\cite{Hil} and \cite{Nev1} for proofs.  

Nevanlinna's theorem characterizes  transcendental functions with finitely many singular values and finitely  many critical values
 in terms of their Schwarzian derivatives. 

\begin{thm}[Nevanlinna,\cite{Nev1}, Chap XI,  \cite{Hil}] \label{Nev}
Every meromorphic function $g$ with   $p < \infty$ asymptotic values and no critical values  has the property that its Schwarzian derivative is a polynomial function of degree $p -2$.
Conversely,  for every polynomial function $P(z)$ of degree $ p-2$,  the solution to
the Schwarzian differential equation $S(g)=P(z)$ is a meromorphic function with exactly $p$ asymptotic values and no critical points. The only essential singularity is at infinity.
\end{thm}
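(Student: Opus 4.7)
The plan is to prove the two directions of Nevanlinna's theorem separately, using a mix of local complex analytic computations and, at the crux, the asymptotic analysis of a second-order linear ODE.

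For the forward direction, I would first analyze the Schwarzian $S(g)$ as a meromorphic function on $\CC$. A direct local expansion at a zero of $g'$ of order $m$ shows that $S(g)(z)$ has a double pole there with principal part $-\tfrac{m(m+2)}{2}(z-z_{0})^{-2}$; in particular, when $q=0$ the function $S(g)$ is holomorphic on $\CC$. At a pole of $g$, I would invoke the cocycle relation together with the vanishing of $S$ on M\"obius transformations to write $S(g)=S(1/g)$; since $1/g$ has a simple zero (not a critical point) there, $S(g)$ is regular. Thus the only finite singularities of $S(g)$ are double poles at the $q$ critical points, and the essential issue is controlling $S(g)$ at $\infty$.

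The crux of the forward direction is showing that $S(g)$ grows only polynomially at $\infty$, of the prescribed order. Here I would appeal to Nevanlinna's theory of covering surfaces and the Denjoy--Carleman--Ahlfors theorem, which both relates $p$ to the order of $g$ and provides a standard conformal model on each asymptotic tract $\cala_{a}$: the restriction $g:\cala_{a} \to U\setminus\{a\}$ is the universal cover of a punctured disk. Using this model to estimate $|g''/g'|$ on each tract, and combining with regular behavior on the complement, one forces $S(g)$ to be rational on $\hat\CC$ with total polar divisor of degree $p+q-2$; in particular, polynomial of degree $p-2$ when $q=0$.

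For the converse, given a polynomial $P$ of degree $p-2$, I would consider the linear ODE $y'' + \tfrac{1}{2}P(z)y = 0$, choose two linearly independent entire solutions $y_{1}, y_{2}$ with constant nonzero Wronskian $W$, and set $g = y_{1}/y_{2}$. A routine computation gives $S(g) = P$; since $g' = W/y_{2}^{2}$ never vanishes, $g$ has no critical points, and the zeros of $y_{2}$ (which are necessarily simple by the ODE, since simultaneous vanishing of $y_{1}$ and $y_{2}$ would contradict $W\neq 0$) produce simple poles of $g$. So $g$ is meromorphic on $\CC$ with essential singularity only at $\infty$. To count asymptotic values, I would analyze the irregular singular point at $\infty$ via WKB/Stokes theory: a polynomial coefficient of degree $p-2$ produces exactly $p$ Stokes sectors at $\infty$, each of opening $2\pi/p$, in which the solutions admit dominant and subdominant exponential asymptotics. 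The ratio $g$ therefore tends to a finite limit along rays in each sector, and a Stokes-matrix argument shows these $p$ limits are pairwise distinct, yielding exactly $p$ asymptotic values.

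I expect the main obstacle to lie in the analytic matching of the asymptotic count with the degree of $P$. On the forward side this is the growth estimate for $S(g)$ at $\infty$ based on the singular set alone -- the deepest part of Nevanlinna's classical argument, via his theory of covering surfaces. On the converse side it is the Stokes analysis needed to verify that the $p$ candidate asymptotic limits are genuinely distinct rather than identified by the M\"obius ambiguity in the choice of $y_{1}, y_{2}$. All the routine algebraic identities then follow from the cocycle relation for $S$.
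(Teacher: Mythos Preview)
The paper does not prove this theorem. It is stated as a classical result with explicit citations to Nevanlinna~\cite{Nev1} and Hille~\cite{Hil}, and the text immediately following it says only that ``a summary of the proof is given in \cite{DK1}''. So there is no proof in the paper to compare your proposal against.

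That said, your outline is a faithful sketch of the classical argument as it appears in Hille and Nevanlinna. The converse direction via the linearization $y'' + \tfrac{1}{2}P(z)y = 0$ and the ratio $g = y_1/y_2$ of independent solutions is exactly the standard route, and your identification of the Stokes/sector analysis at the irregular singular point $\infty$ as the mechanism producing exactly $p$ asymptotic tracts is correct. For the forward direction, your local computations (double poles of $S(g)$ at critical points, regularity at poles of $g$ via $S(g)=S(1/g)$) are right, and you have correctly located the real content in the growth estimate at $\infty$; this is where Nevanlinna's covering-surface theory and the Denjoy--Carleman--Ahlfors bound enter. One point to tighten: the phrase ``total polar divisor of degree $p+q-2$'' is not quite the bookkeeping you want, since the critical points contribute double poles to $S(g)$ while the $p$ in the count comes from the order of growth at $\infty$, not from finite poles; the precise accounting is what makes the degree come out to $p+q-2$, and that step deserves to be written out rather than asserted.
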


A summary  of the proof is given in \cite{DK1} where the behavior of the function in a neighborhood of infinity is described.  There are $p$ equally spaced asymptotic tracts separated by Julia directions along which the poles tend asymptotically to infinity.   An immediate corollary of the theorem is

\begin{cor}\label{Nevcor}  If $f$ is a meromorphic functions with $p$ finite simple asymptotic values and no critical values and $h$ is a homeomorphism of the complex plane $\CC$ such that $g=h^{-1} \circ f \circ h$ is holomorphic (meromorphic), then $S(g)$ is a polynomial of degree $p$. 
\end{cor}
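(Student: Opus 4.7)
The plan is to reduce the corollary to Theorem~\ref{Nev} by verifying that $g$ satisfies its hypotheses: $g$ is meromorphic (given), has exactly $p$ finite simple asymptotic values, and has no critical points. The essence is that both the number of asymptotic values and the presence/absence of critical points are topological invariants of the map, so they transfer through the homeomorphism $h$.

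First I would justify that $h$ extends to a homeomorphism of $\hat\CC$ fixing $\infty$: any homeomorphism $\CC\to\CC$ is proper, hence $h(z_n)\to\infty$ whenever $z_n\to\infty$, and setting $h(\infty)=\infty$ gives a homeomorphism of $\hat\CC$. Consequently, if $\gamma(t)\to\infty$ is an asymptotic path for $f$ with $f(\gamma(t))\to a$, then $\til\gamma(t):=h^{-1}(\gamma(t))$ tends to $\infty$ and
\[
g(\til\gamma(t))=h^{-1}\bigl(f(\gamma(t))\bigr)\longrightarrow h^{-1}(a),
\]
so $h^{-1}(a)$ is an asymptotic value of $g$. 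Exchanging the roles of $f$ and $g$ produces the reverse correspondence, giving a bijection between the asymptotic values of $f$ and those of $g$ via $h^{-1}$. Hence $g$ has exactly $p$ finite asymptotic values, and each is simple because $h^{-1}$ carries each asymptotic tract of $f$ homeomorphically onto the corresponding tract of $g$.

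Next I would check that $g$ has no critical points. A holomorphic map has a critical point at $z$ iff its local mapping degree at $z$ exceeds one, and local degree is invariant under topological conjugation: the local degree of $g$ at $z$ equals the local degree of $f$ at $h(z)$. Since $f$ has no critical values it has no critical points, so its local degree is everywhere $1$; the same therefore holds for $g$, giving $q=0$.

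With $p$ finite asymptotic values and $q=0$ critical points established, Theorem~\ref{Nev} applied to $g$ immediately yields that $S(g)$ is a polynomial of the asserted degree. The entire argument is a bookkeeping exercise in topological invariance; if one must identify a main obstacle, it is verifying that asymptotic paths and local mapping degree really are preserved under $h$, and the crucial ingredient for the former is the propriety of $h$ that forces its continuous extension to $\infty$.
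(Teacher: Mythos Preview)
Your proposal is correct and matches the paper's approach: the paper offers no explicit proof, calling the result ``an immediate corollary'' of Theorem~\ref{Nev}, and your argument supplies precisely the topological-invariance details (asymptotic values, their simplicity, and absence of critical points transfer under the conjugacy $h$) needed to apply that theorem to $g$. As a side note, Theorem~\ref{Nev} with $q=0$ actually yields a polynomial of degree $p-2$, not $p$ as the corollary states; your phrase ``of the asserted degree'' inherits this slip from the statement rather than introducing any error of your own.
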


In \cite{DK1}, this corollary is used to prove that if $f$ has polynomial Schwarzian derivative and all its asymptotic values are finite, then $f$ cannot have a Baker domain.
    
\section{Functions with two simple asymptotic values}
\label{2avs}

Our focus in this paper is on parameter spaces of meromorphic functions with two finite simple asymptotic values and no critical values.  By theorem~\ref{Nev}, such functions are characterized  by the property that they have a constant Schwarzian derivative.  Each asymptotic value is simple and there are exactly two distinct non-equivalent asymptotic tracts.  We  denote this family by ${\mathbb F}_{2}$.

It is easy to compute that $S(e^{2kz})= -2k^2$ and therefore that the most general solution to the equation $S(f)=-2k^2$ is
\begin{equation}\label{basicf} f(z)= \frac{a e^{kz} + be^{-kz}}{ce^{kz}+de^{-kz}},  \quad  ad - bc  \neq 0,
\end{equation}
and its asymptotic values are $\{ a/c, b/d \}$.  Note that both of them are omitted values. According to theorem~\ref{Nev}, the converse is true too. 
Moreover,  by corollary~\ref{Nevcor}, the solution to is unique up to post composition by an affine map.  Precomposition by an affine map multiplies the constant $k$ by the  scaling factor. 

Functions of the form~(\ref{basicf}) have a single essential singularity at infinity and their dynamics are invariant under affine conjugation. If one of the asymptotic values is equal to infinity, $c$ or $d=0$ and the family is the well-studied exponential family. See e.g. \cite{DFJ, RG}. The dynamics are quite different if both asymptotic values are finite and here we restrict ourselves to this situation.

Because we assume the asymptotic values are finite, neither $c$ nor $d$ can be zero. We  choose a representative of an equivalence class, where the equivalence relation is  defined by  affine conjugation, such that $k=1$ and $f(0)=0$; this implies $b=-a$. 
If the asymptotic values are $\la$ and $\mu$ we have
\[  f_{\la, \mu}(z) = \frac{e^{z}-e^{-z}}{\frac{e^{z}}{\la}-\frac{e^{-z}}{\mu}}   \]
where $\la, \mu \in \CC^*$.   If $f'(0)= \rho \in {\mathbb C}^{*}$ we have the relation
\begin{equation}\label{eqn - mult}
\frac{1}{\la} - \frac{1}{\mu} = \frac{2}{\rho}.
\end{equation}

 We still have the freedom to conjugate by the affine map $z \to -z$ so we see that the maps $f_{\la, \mu}(z)$ and $f_{\la', \mu'}(z)=-f_{\la, \mu}(-z)$ have the same dynamics.   That is, 
 $$f_{\la', \mu'}(z)=\frac{e^{z}-e^{-z}}{\frac{e^{z}}{\la'}-\frac{e^{-z}}{\mu'}}=\frac{e^z-e^{-z}}{\frac{e^z}{-\mu}-\frac{e^{-z}}{-\lambda}},$$
 where 
 \begin{equation}\label{eqn - mult}
\frac{1}{\la'} - \frac{1}{\mu'} = \frac{2}{\rho}=-\frac{1}{\mu} + \frac{1}{\la}.
\end{equation}
Set $f_{\la,\mu} \sim f_{\la',\mu'}$ if $\la'=-\mu, \mu'=-\la$ and use this equivalence relation to define the space of pairs of functions: 
$$
\hat{\mathbb{F}}_{2} =\Big\{ f_{\lambda, \rho} (z)=  \frac{e^{z}-e^{-z}}{\frac{e^{z}}{\la}-\frac{e^{-z}}{\mu}} \; \;\Big| \;\; \rho \in \CC^{*},\;\;\lambda \in \CC^*\setminus \{\rho/2\}, \;\; \frac{1}{\la} - \frac{1}{\mu} = \frac{2}{\rho} \Big \} \Big/ \sim.
$$
Note that each pair  of complex numbers $(\la, \rho)$ uniquely determines a pair of functions so that we also use $\hat{\mathbb F}_{2}$   to denote the moduli space of ${\mathbb F}_{2}$.

\medskip
Because of the ambiguity left by the normalization, it is difficult to study $\hat{\mathbb{F}}_2$ directly.  This situation is similar to the space $Rat_2$ of rational functions of degree $2$ with a fixed point at infinity discussed in the introduction.  The affine conjugation $z \to -z$  identifies maps with the same dynamics and sends the parameter $b$ to $-b$.  Thus  the $(b, \rho)$ space  is a 2-fold covering map of the space of functions.  To understand the role of the parameters, however, it is easier to work in this covering space.  This can be done by {\em marking} the singular points and choosing a ``preferred'' point.  In \cite{GK}, the preferred point was taken as $R(+1)$.  The conjugation $z \to -z$ interchanged the marking and corresponded to the involution $b \to -b$ in the lifted parameter space of functions with marked critical values.  See e.g. \cite{M1, GK} for more details.

We proceed in the same way here. To mark the asymptotic values, we choose $\la$ as the ``preferred" value and $\mu$ as the ``non-preferred" value.   That is, $\la = \lim_{t \to \infty}f_{\la,\rho}(\gamma(t))$ where $\Re \gamma(t) \to +\infty$.   We call the space with marked asymptotic values $\mathbb{F}_2$. 
Again the marked space is a 2-fold cover of the space of functions.  Note that if $\la=\infty, \mu=-\rho/2$ and  if $\mu=\infty, \la=\rho/2$.     
 
 Because the stable dynamics of functions in $\mathbb{F}_2$ are controlled by the behavior of the orbits of the asymptotic values, it will be convenient to choose a one dimensional ``slice" in this covering space of $\mathbb{F}_2$ in such a way that at each point in the slice, the orbit of one   asymptotic value has  fixed dynamics. One way to do this is to require that both asymptotic values have similar behavior.  For example, if $\mu=-\lambda$, so that $\la=\rho$, the slice obtained is the tangent family $ f_{\rho}(z)=\rho \tanh z = i \rho \tan (iz)$.  The properties of this slice have been investigated in \cite{KK,  CJK19}.  

 \subsection{The space $\calf_2$.}
 \label{calf2}
In this paper, we begin with the $2$ dimensional subfamily $\calf_2 \subset \mathbb F_2$ where $\rho$ is in the punctured unit disk $\DD^*$.  This means that the origin is an attracting fixed point so the orbit of at least one of the asymptotic values converges to zero.    It may be either the preferred asymptotic value $\la$ or not.   We can parameterize this subspace as 
\[ \calf_2 = \{ f_{\la,\rho} \} =\big( \CC \setminus \{0,\rho/2\} \big) \times \DD^* \]

Each $\rho \in \DD^*$ defines a one dimensional slice we denote by   $\calf_{2,\rho}$.   This is a ``dynamically natural slice'' in the sense of \cite{FK} because one asymptotic value is always attracted to the origin where the multiplier is fixed and the other is free.   We choose the asymptotic value $\la$ as   parameter for our slice; either it, or $\mu(\la)$ (determined by equation~(\ref{eqn - mult})) is the free asymptotic value. Note that because of equation~(\ref{eqn - mult}), when $\la = \rho/2$, $\mu=\infty$ and the function is in the exponential family, not our family.  Also, if $\la=0$, the function reduces to the constant $0$.    The  points of the slice are denoted  by $\la$, $f_{\la}$ or $f_{\la,\rho}$ if we want to emphasize the dependence on $\rho$;    if the context is clear, for readablity we use $f$.   We will prove these slices all have the same structure.

 Simple calculations show
 \[ f_{\la,\mu, \rho}(-z)=f_{\mu,\la,-\rho}(z) \mbox{  and  }  f_{\la,\mu, \rho}(-z)=f_{-\mu,-\la,\rho}(z) \]
  so that interchanging the asymptotic values $\la$ and $\mu$ changes  multiplier from $\rho$ to $-\rho$;   interchanging and negating the asymptotic values changes the marking.
  
\subsection{Fatou components for $f_{\la} \in \calf_{2}$}\label{fatoucomps}

For any $f_{\la} \in \calf_{2}$, the origin is an attracting fixed point with multiplier $\rho$.   Denote its attracting basin (which is non-empty) by $A_{\la}$.

\begin{prop}\label{compinv}
The attracting basin $A_{\la}$ is completely invariant.
\end{prop}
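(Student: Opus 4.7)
The plan is to unwrap the definition of the basin $A_\la$ and of complete invariance, then verify the two inclusions $f_\la(A_\la)\subseteq A_\la$ and $f_\la^{-1}(A_\la)\subseteq A_\la$ separately; together they give the desired identity $f_\la^{-1}(A_\la)=A_\la$. Recall that $A_\la$ is the set of $z\in\CC$ whose forward orbit is defined for all $n\geq 0$ and satisfies $f_\la^n(z)\to 0$; equivalently, it is the union of those Fatou components on which $f_\la^n\to 0$ locally uniformly. I would begin by observing that $A_\la$ contains no pole, since the orbit of a pole lands at $\infty$ after one step and therefore cannot converge to $0$; in particular $f_\la$ is holomorphic on $A_\la$.

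For forward invariance, take $z\in A_\la$. Because $z$ is not a pole, $f_\la(z)$ is well defined, and the orbit of $f_\la(z)$ is the tail $\{f_\la^{n+1}(z)\}_{n\geq 0}$ of the orbit of $z$, so it converges to $0$ as well; hence $f_\la(z)\in A_\la$. Equivalently, $A_\la\subseteq f_\la^{-1}(A_\la)$.

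For backward invariance, suppose $w\in\CC$ satisfies $f_\la(w)\in A_\la\subseteq F_{f_\la}$. I would first invoke the standard backward invariance of the Julia set of a meromorphic function, $f_\la^{-1}(J_{f_\la})\subseteq J_{f_\la}$, recalled in Section~\ref{Basic Dynamics} (see e.g.~\cite{BKL1}), to conclude $w\in F_{f_\la}$. The orbit of $w$ from its first iterate onward coincides with the orbit of $f_\la(w)$, which tends to $0$, so the orbit of $w$ itself converges to $0$ and $w\in A_\la$. The two omitted asymptotic values $\la$ and $\mu$ contribute no preimages to $f_\la^{-1}(A_\la)$, but this absence only makes the preimage set smaller and poses no obstruction to the inclusion. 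The one point worth flagging, and the only mild subtlety, is precisely this appeal to backward invariance of $J_{f_\la}$ in the transcendental meromorphic setting; once that is cited, the rest is bookkeeping on definitions and requires no computation.
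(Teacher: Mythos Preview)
Your argument is correct for the proposition as literally stated, and indeed it shows that complete invariance of the \emph{full} attracting basin is a general fact requiring nothing special about $\calf_2$: if $f_\la^n(z)\to 0$ then $f_\la^n(f_\la(z))\to 0$, and conversely. (The detour through backward invariance of $J_{f_\la}$ is harmless but unnecessary; your orbit-tail argument already places $w$ in $A_\la$ directly.)

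The paper, however, proves something strictly stronger along the way: it shows that the \emph{immediate} basin $I_\la$ is itself backward invariant, and hence that $I_\la=A_\la$. This connectedness of $A_\la$ is not automatic and genuinely uses the structure of the family --- specifically, that one of the two omitted asymptotic values lies on the boundary of the linearization domain $O_\la$, so that every inverse branch of $f_\la$ applied to $O_\la$ yields a set containing a path out to infinity through the corresponding asymptotic tract; all preimages of $0$ therefore land in a single Fatou component. The equality $I_\la=A_\la$ is what the rest of the paper actually relies on (for instance in the dichotomy of Proposition~\ref{Cantordichot}, and throughout Section~\ref{Shift} where $A_\la$ is treated as a single simply connected domain). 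Your proof, while valid for the stated claim, does not supply this, so be aware that the proposition is doing double duty and the substantive content is the connectedness, not the complete invariance per se.
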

\begin{proof}
Since the origin is fixed, It is sufficient to prove  that its immediate basin of attraction $I_{\la} \subset A_{\la}$ is backward invariant.

On a neighborhood  $N \subset I_{\la}$ of the origin,  we can define a uniformizing map $\phi_{\la}(z)$ such that $\phi_{\la}(0)=0$,  $\phi_{\la}'(0)=1$ and $\phi_{\la} \circ f_{\la} = \rho \phi_{\la}$.      It extends by analytic continuation to the whole immediate attractive basin $I_{\la}$.
 Denote by  $O_{\la}$  the largest neighborhood of the origin  on which  $\phi_{\la}$ is injective.  One (or both) of the asymptotic values must be on the boundary of $O_{\la}$.   Assume for argument's sake that $\mu \in  \partial O_{\la}$.
 Choose a path $\gamma$ joining $0$ to $\mu$ in $O_{\la}$.  If $g$ is any inverse branch of $f_{\la}$,   then  $g(O_{\la})$ contains a path joining $g^{-1}(0)$ to infinity that passes through the asymptotic tract $\mathcal A_{\mu}$ of $\mu$.   Thus all these paths are contained in the same component of $f_{\la}^{-1}(O_{\la})$.  Therefore this component contains all the pre-images of $0$,  and since one branch fixes $0$, this component is $I_{\la}$.   It follows that
  $ I_{\la}$ is backward invariant and $I_{\la}=A_{\la}$.
   \end{proof}

\medskip
\begin{remark}  
The main point in the above argument is that whenever there is an attracting cycle, its basin contains a singular value, which, in this family is an omitted asymptotic value, and thus the component of the basin containing the asymptotic value can have only one preimage and it contains the asymptotic tract.   Above, because the attracting cycle consisted of the fixed point $0$, these components coincided.  If there is a second, non-zero attracting cycle, and  period of the cycle is one, it too has an invariant basin and the Fatou set consists of two completely invariant components.  If the period is greater than one, the component containing the asymptotic value and its preimage  are distinct and the full  basin contains infinitely many components; all but the component containing the asymptotic value have infinitely many preimages.  
 \end{remark}

\subsubsection{No Herman rings} 

Here we digress to prove a proposition about the non-existence of Herman rings for slightly larger classes of functions than $\calf_2$.  See also~\cite{Nay} for a similar study. 

\begin{prop}\label{NHR1}
 Suppose $f$ is a meromorphic function with an attracting fixed point whose basin of attraction has the properties that it contains at least one asymptotic value and  is completely invariant.  Then $f$ cannot have a Herman ring.  
 \end{prop}
\begin{proof}\footnote{ We thank the referee for suggesting this simplification of our original proof.}
We may assume without loss of generality that $f$ fixes the origin and attracts the asymptotic value $\mu$.  By hypothesis the basin of attraction  of the origin, $A_0$, is completely invariant and hence connected.  Moreover, because it contains an asymptotic value, it is unbounded. It follows that $\partial A_0 \subset J$ is also completely invariant.   A standard property of Julia sets is that if $z \in J$, then $J=\overline{\cup_{n \in \ZZ}f^n(z)},$ so that $\partial A_0=J$.   

If $f $ had a Herman ring $R$, its complement would consist of two components.  Moreover, because  the poles are dense in $J$, the bounded complementary component,  $B_R$, would contain a pole of some minimal order $m$ and   therefore   $B_R$ would contain an $m^{th}$ preimage of an asymptotic tract of $\mu$  so that it would intersect  $A_0$.   It follows that $A_0$ is disconnected which is a contradiction.

\end{proof}

\subsection{The parameter space trichotomy} 

Proposition~\ref{compinv} implies the following  trichotomy for $\calf_{2}$:
\begin{itemize}
\item $A_{\la}$ contains both asymptotic values: this is called the {\em shift locus} and denoted $\cals$.
\item  $A_{\la}$ contains only the preferred  asymptotic value $\la$:  in this case the other asymptotic value $\mu$ is not attracted to the origin and we call the set of such $\la$'s  $\calm_{\mu}$.   We denote the subset where $\mu$ is attracted to an attracting periodic cycle  by $\calm_{\mu}^0$.
\item $A_{\la}$ contains only the non-preferred asymptotic value $\mu$:  in this case the other asymptotic value $\la$ is not attracted to the origin and  we call the set of these $\la$'s  $\calm_{\la}$. We denote the subset where $\la$ is attracted to an attracting periodic cycle  by $\calm_{\la}^0$.
\end{itemize}

The maps in $\cals$, $\calm_{\la}^0$ and $\calm_{\mu}^0$ are hyperbolic because the orbits of their asymptotic values accumulate on attracting cycles.   The  connected components of these three subsets of parameter space are  thus called {\em hyperbolic components}. 

\medskip
As with the space $Rat_2$, there is an inversion of the space $\calf_{2}$ that 
 interchanges the regions $\calm_{\la}$ and $\calm_{\mu}$ and leaves $\cals$ invariant.

 Let ${\mathcal C}_0$ be the circle  in the $\la$ plane centered at  the parameter singularity $\rho/2$ with radius $|\rho/2|$ and let $D$ be the disk it bounds, punctured at the singularity $\rho/2$.  (See figure~\ref{circleofinversion}.) The inversion 
   \[ I(\la)=-\mu = \frac{\la}{2\la/\rho -1} \] leaves ${\mathcal C}_0$ invariant and interchanges $\la$ and $-\mu$.  
   \begin{prop}\label{calmu and calla}  If $f_{\la}^n(\la) \not\rightarrow 0$  as $n \to \infty$, then $f_{I(\la)}^n(I(\la)) \rightarrow 0$.  That is, the inversion interchanges the regions $\calm_{\la}$ and $\calm_{\mu}$ in the plane where only one of the asymptotic values goes to zero.
  \end{prop}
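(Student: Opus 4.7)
The plan is to exploit the involutive symmetry already identified in Section~\ref{2avs}. Writing $\sigma(z)=-z$, the identity $f_{\la,\mu,\rho}(-z)=f_{-\mu,-\la,\rho}(z)$ recorded there says precisely that $f_{I(\la)}=\sigma\circ f_{\la}\circ \sigma$, since the marked asymptotic value of $f_{-\mu,-\la,\rho}$ is $-\mu=I(\la)$ and the non-preferred one is $-\la$. Because $\sigma$ is an involution, this conjugacy iterates cleanly to give
\[ f_{I(\la)}^{n}(z)=-f_{\la}^{n}(-z) \quad \text{for every } n\geq 0 \text{ and every } z\in\CC \text{ for which the right side is defined.} \]
Specialising to $z=I(\la)=-\mu$ produces the key identity $f_{I(\la)}^{n}(I(\la))=-f_{\la}^{n}(\mu)$, so it suffices to show that $f_{\la}^{n}(\mu)\to 0$.

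For this I translate the hypothesis into a statement about the attracting basin $A_{\la}$ of the origin. Since $\rho\in\DD^{*}$, the basin $A_{\la}$ is non-empty, and by the standard result quoted in Section~\ref{Basic Dynamics} every attracting cycle of a meromorphic function contains a singular value in its basin. By Nevanlinna's theorem (Section~\ref{nev thm}), functions in $\calf_{2}$ have no critical values, so the only singular values of $f_{\la}$ are $\la$ and $\mu$; hence $A_{\la}\cap\{\la,\mu\}\neq\emptyset$. The hypothesis $f_{\la}^{n}(\la)\not\to 0$ is equivalent to $\la\notin A_{\la}$, and this forces $\mu\in A_{\la}$, i.e. $f_{\la}^{n}(\mu)\to 0$. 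Combined with the identity from the first paragraph, $f_{I(\la)}^{n}(I(\la))=-f_{\la}^{n}(\mu)\to 0$, as desired.

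There is essentially no serious obstacle in this argument; the only point that demands care is the bookkeeping at the start, making sure that the algebraic symmetry $z\mapsto -z$ sends $f_{\la}$ precisely to $f_{I(\la)}$ (with the correct \emph{marked} asymptotic value) and not to a relative of it. This is immediate from the normalization conventions fixed for the slice $\calf_{2,\rho}$ in Section~\ref{calf2}, together with the fact that $I(\la)=-\mu$ by definition.
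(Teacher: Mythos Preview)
Your proof is correct and follows essentially the same approach as the paper: both use the affine conjugacy $f_{I(\la)}(z)=-f_{\la}(-z)$ (equivalently, $f_{I(\la)}=\sigma\circ f_{\la}\circ\sigma$ with $\sigma(z)=-z$) to transport the orbit of $\mu$ under $f_{\la}$ to the orbit of $I(\la)=-\mu$ under $f_{I(\la)}$. The paper verifies the key identity by writing out the explicit rational-in-$e^{z}$ expressions (in fact with a harmless sign slip), whereas you package the same computation as the abstract conjugacy and additionally spell out why $\mu\in A_{\la}$ via the singular-value-in-basin principle; the paper simply invokes the trichotomy already established in Section~\ref{fatoucomps} for that step.
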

  
  \begin{proof}  Suppose   $f_{\la}^n(\la) \not\rightarrow 0$ so that $f_{\la}^n(\mu) \rightarrow 0$.  Since $ I(\la)=-\mu$ and $I(\mu)=-\la$, we can 
   write 
  \[ f_{-\mu}(-\mu)=\frac{e^{-2\mu}-1}{\frac{e^{-2\mu}}{-\mu} -\frac{1}{-\la}} = \frac{e^{2\mu}-1}{\frac{e^{2\mu}}{\la} -\frac{1}{\mu}}= f_{\la}(\mu) \]
and 
     \[ f_{-\mu}(-\la)=\frac{e^{-2\la}-1}{\frac{e^{-2\la}}{-\mu} -\frac{1}{-\la}} = \frac{e^{2\la}-1}{\frac{e^{2\la}}{\la} -\frac{1}{\mu}}= f_{\la}(\la). \]
  
    \end{proof}
  
 It follows that the inversion also preserves the region $\cals$ where both asymptotic values go to zero.  

  When $\rho$ is real, we can say more. 
   
 \begin{prop}\label{invpts}  Suppose $\rho$ is real  and $\la \in {\mathcal C}_0$. Then both $f_{\la}^n(\la) \rightarrow 0$ and $f_{\la}^n(\mu) \rightarrow 0$.    \end{prop}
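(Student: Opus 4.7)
The plan is to exploit an anti-holomorphic symmetry of $f_{\la}$ that exists precisely when $\la$ lies on $C_0$ and $\rho$ is real.

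First I would parameterize $C_0$. For $\rho$ real, write $\la=(\rho/2)(1+e^{i\theta})$, so
\[
-\mu \;=\; I(\la) \;=\; \frac{\la}{2\la/\rho -1} \;=\; \frac{(\rho/2)(1+e^{i\theta})}{e^{i\theta}} \;=\; \frac{\rho}{2}(1+e^{-i\theta}) \;=\; \overline{\la}.
\]
Thus for $\la\in C_0$ with $\rho$ real, one has the key identity $\mu=-\overline{\la}$.

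Next I would establish the symmetry of $f_\la$. A direct computation with $f_{\la,\mu}(z)=(e^z-e^{-z})/(e^z/\la-e^{-z}/\mu)$ gives $\overline{f_{\la,\mu}(\overline{z})}=f_{\overline{\la},\overline{\mu}}(z)$. Substituting $\overline{\la}=-\mu$ and $\overline{\mu}=-\la$, together with the relation $f_{-\mu,-\la}(z)=-f_{\la,\mu}(-z)$ noted earlier in the paper, yields $\overline{f_{\la}(\overline{z})}=-f_{\la}(-z)$. Setting $\sigma(z)=-\overline{z}$ (the reflection across the imaginary axis), this is exactly
\[
f_{\la}\circ\sigma \;=\; \sigma\circ f_{\la}.
\]
Hence $f_{\la}^{n}\circ\sigma=\sigma\circ f_{\la}^{n}$ for every $n\geq 0$. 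Note that $\sigma$ is continuous, fixes $0$, and—by the identity $\mu=-\overline{\la}$—interchanges the two asymptotic values: $\sigma(\la)=\mu$ and $\sigma(\mu)=\la$.

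Finally I would combine the symmetry with the standard fact recalled in Section~\ref{Basic Dynamics}: the attracting basin $A_{\la}$ of the fixed point $0$ must contain at least one singular value. Since the only singular values of $f_{\la}$ are $\la$ and $\mu$, at least one of the orbits $\{f_{\la}^{n}(\la)\}$, $\{f_{\la}^{n}(\mu)\}$ converges to $0$. But then applying $\sigma$ and using the commutation relation,
\[
f_{\la}^{n}(\sigma(w)) \;=\; \sigma\bigl(f_{\la}^{n}(w)\bigr) \longrightarrow \sigma(0)=0,
\]
so convergence of one orbit to $0$ forces convergence of the other. Therefore both $f_{\la}^{n}(\la)\to 0$ and $f_{\la}^{n}(\mu)\to 0$, which is the desired conclusion.

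The proof is essentially an exercise in book-keeping with the defining formula; the only non-routine point is recognizing the correct anti-holomorphic involution $\sigma(z)=-\overline{z}$, and its key feature is that the circle $C_0$ is exactly the locus where $\sigma$ swaps the two marked asymptotic values while preserving the attracting fixed point. No further dynamical input is needed.
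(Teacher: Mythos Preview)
Your proof is correct and follows essentially the same approach as the paper: both hinge on the relation $\mu=-\overline{\la}$ for $\la\in C_0$ with $\rho$ real, and on the resulting anti-holomorphic symmetry $f_{\la}(-\overline{z})=-\overline{f_{\la}(z)}$, from which one concludes that the orbits of $\la$ and $\mu$ are $\sigma$-conjugate and hence converge to $0$ together. Your write-up is in fact a bit cleaner than the paper's, since you explicitly state and verify the full commutation $f_{\la}\circ\sigma=\sigma\circ f_{\la}$ (which the paper only spells out later, in the proof of Proposition~\ref{cals circle}), whereas the paper's proof of this proposition records only the special instance $f_{\la}(\mu)=-\overline{f_{\la}(\la)}$ and leaves the iteration step implicit.
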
  

\begin{proof}   
Set $\la_1=I(\la)$. Then  $-\mu = I(\la) = \bar{\la}  =\la_1$.    Thus,
\[\overline{f_{\la} (\la)} = f_{\bar{\la}}(\bar{\la})=f_{\la_1}(\la_1); \] 
so if $f_{\la}^n(\la) \rightarrow 0$, $f_{\la_1}^n(\la_1) \rightarrow 0$.  

We can rewrite this as 
\[ f_{\la}(\mu)=f_{\la}(-\bar{\la}) = - \overline{f_{\la}(\la)}.\]
Therefore, either both asymptotic values iterate to zero or neither does.  Since  the origin is an attracting fixed point with multiplier $\rho$, at least one must and so they both do.   
\end{proof}

This proposition says when $\rho$ is real,  the region where both asymptotic values are attracted to zero contains the invariant circle of the inversion. 

Notice that the point $\la = \rho$ is on the circle ${\mathcal C}_0$. At that point we have $\mu = -\la$, $f_{\la} = \la \tanh z$ and  $I(\la)=\la$ so that it is a branch point of the double covering defined by the marking.    Moreover, because of the symmetry  both  asymptotic values are attracted to  zero.   

If $\la \in \calm_{\mu}^0 \mbox{ or } \calm_{\la}^0$, $f_{\la}$ has an attracting periodic cycle different from the origin.  This cycle has an attractive basin which we denote by $K_{\la}$ and $A_{\la} =\hat\CC \setminus \overline{K_{\la}}$.     Thus $\partial{K_{\la}}$ is the Julia set and $\overline{K_{\la}}$ is  the ``filled Julia set''.  Both of them are unbounded sets in $\CC$.  

    \subsection{The set $\overline{K_{\la}}$}\label{Julia set dichotomy}

  In \cite{KK}, it is proved that for $\rho \in \DD^*$, the Julia set $J_{\la}$ of the function $T_{\rho}(z)=\rho \tanh (z)$ is a  Cantor set.  Moreover, it is homeomorphic to a space consisting of finite and infinite sequences on an alphabet isomorphic to the natural numbers and infinity.  The finite sequences end with infinity. The homeomorphism conjugates $T_{\rho}$ to the shift map on this alphabet. See \cite{DK1, Mo} for details.    
 
At this point in this paper we can prove:

\begin{prop}\label{Cantordichot}
If $\Omega$ is the hyperbolic component of the $\la$ plane  containing  $f_{\la_0}=\rho \tanh z$
and  $\la \in \Omega$,  then the Julia set of $f_{\la}$ is a Cantor set.  If $\la \in \calm_{\la}$ or $\calm_{\mu}$  then  $\overline{K_{\la}}$ is  full.
\end{prop}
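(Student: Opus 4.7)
The statement splits into two assertions that I would handle separately. For the first (Cantor Julia set throughout $\Omega$), my plan is to reduce to the tangent case $\la=\rho$ using $J$-stability in the hyperbolic component. For the second (fullness of $\overline{K_\la}$ when only one asymptotic value is attracted to $0$), the plan is to extract the result directly from Proposition~\ref{compinv}.

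For part one: since $\Omega$ lies inside $\cals$, both asymptotic values are attracted to the origin for every $\la\in\Omega$, so $f_\la$ is hyperbolic and its post-singular set stays uniformly bounded away from $J_{f_\la}$. Nevanlinna's theorem guarantees that the singular set of every member of $\calf_2$ consists precisely of the two marked asymptotic values, and both $\la$ and $\mu(\la,\rho)$ depend holomorphically on $\la$. This is exactly the setup for a Ma\~n\'e--Sad--Sullivan type argument adapted to the transcendental family $\calf_2$: I would build a holomorphic motion of $J_{f_\la}$ parametrized by $\Omega$, pulled back from the motion of the repelling periodic cycles (dense in the Julia set) and extended to all of $J_{f_\la}$ via the $\lambda$-lemma. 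This motion produces a homeomorphism $J_{f_\la}\to J_{T_\rho}$ for every $\la\in\Omega$. Since $T_\rho=\rho\tanh z$ has Cantor Julia set by Keen--Kotus~\cite{KK}, and being a Cantor set is a topological property, $J_{f_\la}$ is Cantor for every $\la\in\Omega$.

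For part two: suppose $\la\in\calm_\la\cup\calm_\mu$. By Proposition~\ref{compinv} the attracting basin $A_\la$ of the origin coincides with its immediate basin $I_\la$, which is a single Fatou component and therefore connected. Using the relation $A_\la=\hat{\CC}\setminus\overline{K_\la}$, the complement of $\overline{K_\la}$ in the Riemann sphere is a connected open set, which is precisely the statement that $\overline{K_\la}$ is full. Note that in this case the asymptotic value not captured by $A_\la$ is free to behave in various ways, but the fullness conclusion depends only on the connectedness of $A_\la$ guaranteed by Proposition~\ref{compinv}, so no case analysis on the fate of the free asymptotic value is needed.

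The main obstacle is the first part: rigorously setting up the holomorphic motion and $J$-stability across all of $\Omega$ in the transcendental setting, rather than just locally around $\la_0=\rho$. Once hyperbolicity on $\Omega$ is in hand and the holomorphic dependence of the two asymptotic values on $\la$ is noted, the $\lambda$-lemma does the work; the delicate point is ensuring that the motion extends globally over the component $\Omega$ without encountering a bifurcation, which is ultimately a consequence of the definition of $\Omega$ as a hyperbolic component. The second part is then a one-line consequence of a proposition already proved.
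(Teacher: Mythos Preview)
Your Part~1 is essentially the paper's argument: both transport the Keen--Kotus Cantor-set result from $\lambda_0=\rho$ across the hyperbolic component by structural stability, you via a holomorphic motion and the $\lambda$-lemma, the paper via quasiconformal surgery producing conjugacies $g(t)$ along a path. Same idea, different packaging.

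Your Part~2 is correct but takes a genuinely different and weaker route. You invoke Proposition~\ref{compinv} to get $A_\la=I_\la$ as a single Fatou component, hence connected, and then read off that $\overline{K_\la}=\hat\CC\setminus A_\la$ is full. The paper instead proves the stronger statement that $A_\la$ is \emph{simply connected}: writing $A_\la=\bigcup_{n\ge1} f^{-n}(D_r(0))$ as an increasing union, it observes that since $\la\notin A_\la$ the map $f:f^{-(n+1)}(D_r(0))\to f^{-n}(D_r(0))\setminus\{\mu\}$ is a covering over a once-punctured disk, so each $f^{-n}(D_r(0))$ is simply connected by induction. Your shortcut suffices for fullness in the usual sense, but the simple connectivity is not a throwaway: the same argument, applied to the basin $K_0$ of the non-zero fixed point of the model map $Q=f_{\la_0}$, is what the paper later uses in the Model Space construction (Lemma~\ref{the map E}) to guarantee a monodromy-free analytic continuation of $\xi_\la$ over $K_0$. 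So your approach proves the proposition as stated but discards a piece of information the paper relies on downstream.
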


\begin{remark}  In theorem~\ref{thm:shift locus} we will prove that the shift locus $\cals$ is connected so that $\Omega=\cals$.  It will then follow that we have a dichotomy similar to that for quadratic polynomials.
\end{remark}

\begin{proof}   If $\la_0=\rho$, by symmetry, both $\lambda_0$ and $\mu_0=-\lambda_0$ are in $A_{\la}$.  By the results in \cite{KK}, the Julia set of $f_{\la_0}$ is a Cantor set.  Suppose $\la \in \Omega$, and let $\la(t)$, with $\la(0)=\la_0$ and $\la(1)=\la$, be a path in $\Omega$.
By standard arguments using quasiconformal surgery,  see e.g. \cite{McMSul,BF} and sections~\ref{map e} and~\ref{teichth}, we can construct quasiconformal homeomorphisms $g(t)$ conjugating $f_{\la_0}$ to $f_{\la(t)}$ that preserve the dynamics. Since the maps are hyperbolic, 
   the Julia sets of $f_{\la(t)}$ are  quasiconformally equivalent and thus also topologically equivalent.  

 Suppose now that $\la \in \calm_{\la}$ so that  $\la$ is not in $A_{\la}$.  The same argument works for $\la \in \calm_{\mu}$, interchanging the roles of $\la$ and $\mu$.  Take a generic small $r$, such that $\partial D_r(0)$ does not contain a point in the forward orbit of $\mu$. Then by definition,  $A_\lambda=\cup_{n\geq 1} f^{-n}(D_r(0))$ and $f^{-n}(D_r(0))\subset f^{-(n+1)}(D_r(0))$. Note that  because $\lambda\notin A_\lambda$, $f: f^{-(n+1)}(D_r(0))\to f^{-n}(D_r(0))\setminus\{\mu\}$ is a covering and so  $f^{-1}(D_r(0))$ is simply connected. Therefore $A_\lambda$ simply connected, which implies that it is complement $\overline{K_\lambda}$ is full.
\end{proof}
 
 Note that the argument above adapts easily to show that if $f_{\la}$ has a non-zero attracting or parabolic fixed point   the attracting basin of this fixed point is unbounded and completely invariant.  Other standard arguments, \cite{Mil}, show that if $f_{\la}$ has a neutral fixed point with a Siegel multiplier, its boundary must be contained in the post singular set.  Thus there are two completely invariant domains in the Fatou set separated by the Julia set.   An example of this is shown in figure~\ref{Juliaset1} where $\rho=2/3$ and $\la=2+2i$.  The yellow is the basin of $0$ and the blue is the basin of the fixed point  $2.25818 + 2.12632i$.   The proof of the Main Structure Theorem uses another example of a function with two attractive fixed points and its dynamic space is shown in figure~~\ref{modelbasin}.   
 
 \begin{figure}
     \centering
  \includegraphics[width=5in]{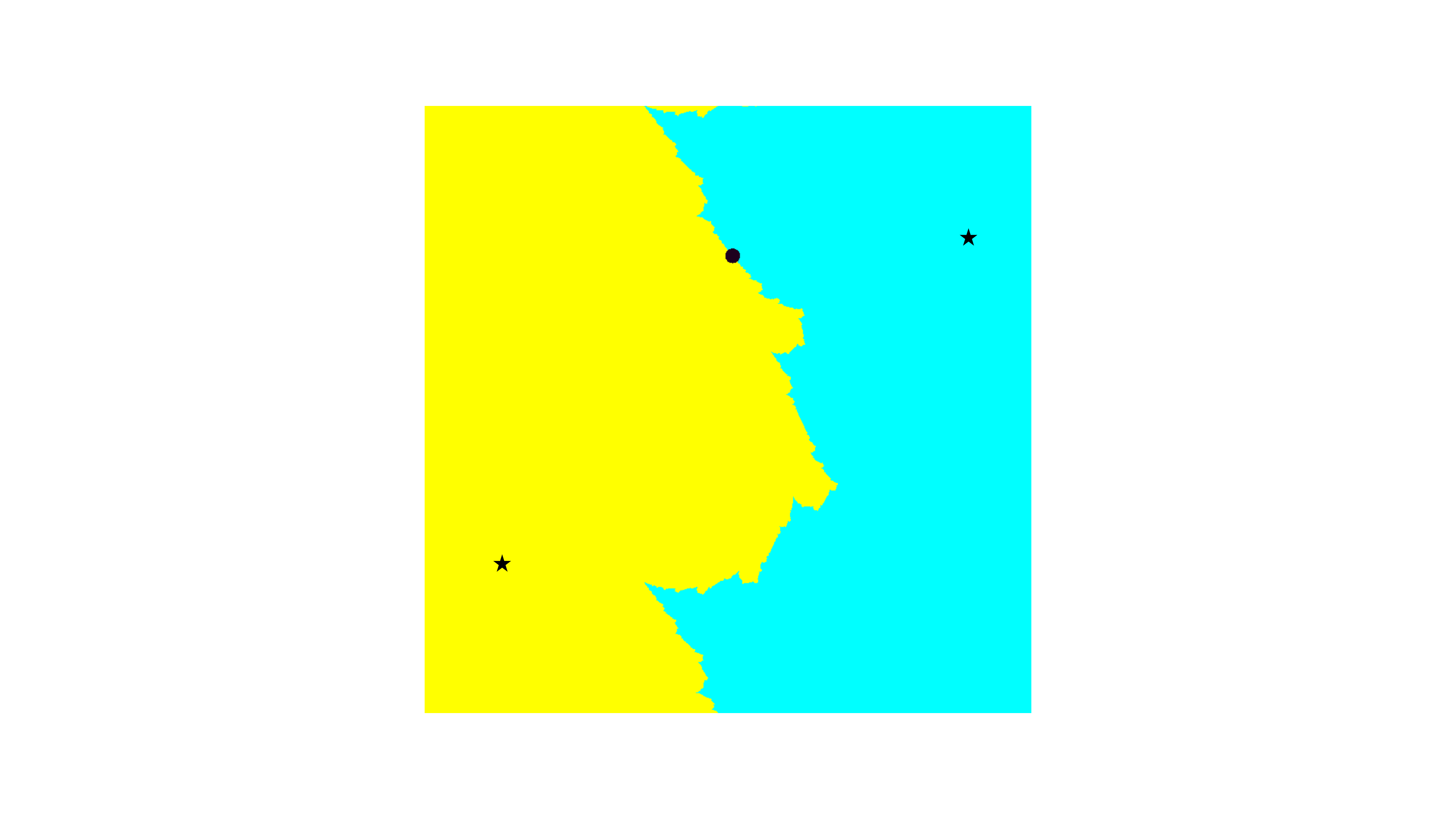}
  \caption{The dynamic plane of $f_{\la}$ with $\rho=2/3$ and $\la=2+2i$.  The fixed points are stars and the black dot is a pole. }
  \label{Juliaset1}
\end{figure}
 
\part{Properties of the Hyperbolic Components of the $\la$-plane.}

\section{Shell components:  Properties of $\calm_{\la}$ and $\calm_{\mu}$ }
\label{sec:shell components}
In this section, we work only with hyperbolic components in $\calm_{\la}^0$.  By propositions~\ref{calmu and calla} and~\ref{invpts}, the discussion for $\calm_{\mu}^0$ is essentially the same. 
 By definition, all the maps in $\calm_{\la}^0$ are hyperbolic; $\calm_{\la}^0$ consists of components in which standard arguments (see e.g. \cite{BF}) show any two functions corresponding to parameters in the component are quasiconformally conjugate.  Following \cite{FK} we call these  components {\em Shell Components}.    In that paper,   more general functions were considered and the properties of the shell components were described.  Here we summarize what we need from that description.
 We begin with some definitions.

\subsection{Virtual Cycle Parameters and Virtual Centers}\label{sec:vcs}

Let $\Omega$ be a hyperbolic component in $\calm_{\la}$ and let $\la \in \Omega$.  Both $\la$ and $\mu$ are attracted by attracting cycles of $f_{\la}$, and since $\la \in \calm_{\la}$, $\mu$ is attracted to the origin and $\la$ is attracted to a different cycle of order $n \geq 1$.  Since all the $f_{\la}$, $\la \in \Omega$ are quasiconformally conjugate,  all the functions in $\Omega$  have non-zero attracting cycles of  the same period, say $n$.    We say {\em $\Omega$ has period $n$} and where appropriate, denote it by $\Omega_n$.

We need the following definitions:

\begin{definition}  If $\la \in \calf_2$ and there exists an integer $n>1$ such that either $f_{\la}^{n-1}(\la)=\infty$ or $f_{\la}^{n-1}(\mu) = \infty$,  then $\la$ is called a {\em virtual cycle parameter}.  In the first case
  set $a_1=\la$ and in the second case set $a_1=\mu$.  Next set  $a_{i+1}=f_\la(a_i)$ where $i$ is taken modulo $n$ so that $a_{0}=\infty$.  We call the set  $\mathbf{a}=\{a_1, a_2, \ldots, a_{n-1},  a_{0}  \}$ a {\em virtual cycle}, or if we want to emphasize the period, {\em virtual cycle of period $n$}.
\end{definition}

 This definition is justified by the following.  Assume  for argument's sake that we are in the first case.  Let $\gamma(t)$ be an asymptotic path for $\la=a_1$, that is, $\lim_{t\to \infty} \gamma (t) =\infty$ and $\lim_{t\to \infty} f(\gamma(t))=a_1$.

Then $\hat\gamma=f^{-n}(\gamma(t))$ is again an asymptotic path where the inverse branches are chosen so that $f^{-1}(a_i)=a_{i-1}$; that is,  \[ \lim_{t \to \infty} f(\gamma(t))=\lim_{t \to \infty} f^{n+1}(\hat\gamma(t))=\la=a_1,  \]
so that in this limiting sense, the points form a cycle.

\begin{definition} Let $\Omega_n$ be a shell component of period $n$ and let
\[\mathbf a_\la=\{a_0, a_1, \ldots, a_{n-2},  a_{n-1} \} \]
be the attracting cycle of period $n$ that attracts $\la$ or $\mu$.   Suppose that as $k \to \infty$,  $\la_k \to \la^* \in \partial{\Omega_n}$ and     the multiplier $\nu_{\la_k}=\nu(\mathbf a_{\la_k})=\Pi_{i=0}^{n-1} f'(a_{i}(\la_k)) \to 0.$   Then $\la^*$ is called a {\em virtual center} of $\Omega_n$.
\end{definition}

\begin{remark} Note that if $n=1$ and $a_0(\la_k)$ is the fixed point,  the definition implies that $f'(a_{0}(\la_k)) \to 0$.  This in turn implies that either $\la$ tends to $\infty$ or $\la$ tends to the parameter singularity $\rho/2$ so that $\mu$ tends to $\infty$.   These ``would be'' virtual centers do not belong to the parameter space but they share many properties with proper virtual centers including transversality (see definition~\ref{trans}).
\end{remark}

Since the attracting basin of the cycle $\mathbf a_\la$ must contain an asymptotic value, we will assume throughout the paper that the points in the cycle are labeled so that $\la$ or $\mu$ and $a_1$ are in the same component of the immediate basin.

In the next theorem we collect the results in \cite{FK} about shell components  for fairly general families of functions.  The proof of Parts [b] and [c] are based on an estimate of the growth of the orbits of the singular values given in  lemma 2.2 of \cite{RS},  and on proposition 6.8 of \cite{FK}. Part [d] is theorem 6.10 of \cite{FK}. Part [e] combines the accessibility in Part [d] and theorem A of \cite{CK}, whose proof contains a construction that shows that every virtual cycle parameter is on the boundary of a shell component.

\begin{thm}[Properties of Shell Components of $\calf_{2}$]\label{thm:shell components}
Let $\Omega$ be a shell component in $\calf_{2}$.  Then
\begin{enumerate}[(\rm a)]
\item The map $\nu_\la: \Omega \rightarrow \DD^*$ is a universal covering map.  It extends continuously to $\partial\Omega$ and $\partial\Omega$ is piecewise analytic;  $\Omega$ is  simply connected and   $\nu_\la$ is infinite to one.

\item There is a unique virtual center on $\partial\Omega$.   If   the period of the component is $1$ and $\Omega$ is a shell component of $\calm_{\la}$, the component is unbounded and the virtual center is at infinity;  if, however,  $\Omega$ is a shell component of $\calm_{\mu}$ of period $1$, then it is bounded and the virtual center is at the finite point $\rho/2$ which is a parameter singularity.  This is the only difference between $\calm_{\la}$ and $\calm_{\mu}$.  

\item  If $\la_k \in \Omega$ of period greater than $1$ is a sequence tending to the virtual center  $\la^*$ and $a_0(\la_k)$ is the periodic point of the cycle $\mathbf a(\la_k)$ in the component containing the asymptotic tract and $a_1(\la_k)=f_{\la_k}(a_0(\la_k))$, then as $k \to \infty$, $a_0(\la_k) \to \infty$ and $a_1(\la_k) \to \la^*$.

\item  Every   virtual center of a shell component is a virtual cycle parameter and it is an accessible boundary point.

\item  Every virtual cycle parameter is a virtual center.
\end{enumerate}
\end{thm}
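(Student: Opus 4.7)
The plan is to establish the four parts in sequence, drawing on the shell-component theory of \cite{FK} and the orbit-growth estimate of \cite{RS}.

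For part (a), I would realize $\Omega$ as a moduli space via Koenigs linearization. Fix a basepoint $\la_0 \in \Omega$ with attracting cycle $\mathbf a(\la_0)$ of multiplier $\nu_0$. Near each periodic point $f_{\la_0}^n$ is holomorphically conjugate to $z \mapsto \nu_0 z$, and the quotient of the grand orbit of the immediate basin by $f_{\la_0}^n$ is a torus whose modulus records $\nu_0$. For any prescribed $\nu \in \DD^*$ one constructs an $f_{\la_0}^n$-invariant Beltrami differential supported on the immediate basin whose straightening via the measurable Riemann mapping theorem produces a quasiconformal conjugate $f_\la \in \Omega$ with multiplier $\nu$; this gives surjectivity of $\nu_\la$ onto $\DD^*$. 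Lifting to the universal cover $\HH \to \DD^*$, $\tau \mapsto e^{2\pi i \tau}$, identifies $\Omega$ with $\HH$, which simultaneously yields simple connectivity, the universal covering property, and the infinite-to-one statement (the deck group is $\ZZ$). Continuity of $\nu_\la$ up to $\partial\Omega$ and piecewise analyticity of $\partial\Omega$ follow from holomorphic dependence of the cycle on $\la$ and the analyticity of the bifurcation locus $\{|\nu_\la|=1\}$.

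For part (c), which is prerequisite to the case analysis in (b), the key input is lemma 2.2 of \cite{RS}: along $\la_k \to \la^*$ with $\nu_{\la_k} \to 0$, the product $\prod_{i=0}^{n-1} f_{\la_k}'(a_i(\la_k))$ tends to $0$, so at least one factor tends to $0$. Since $f_\la \in \calf_2$ has no critical points, $f_\la'(z) \to 0$ can occur only as $z \to \infty$ along an asymptotic tract; the labeling convention places that tract in the component containing $a_0$, forcing $a_0(\la_k) \to \infty$. Then $a_1(\la_k) = f_{\la_k}(a_0(\la_k))$ tends to the asymptotic value captured by the cycle, namely $\la^*$ (taking the convention that we are in a $\calm_\la$ component; the $\calm_\mu$ case is parallel after applying the inversion of Proposition~\ref{calmu and calla}).

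For part (b), the identification $\Omega \cong \HH$ from (a) isolates a unique cusp where $\nu_\la \to 0$, giving the virtual center; uniqueness is part of Proposition 6.8 of \cite{FK}. For period-$1$ components one then uses part (c) together with the explicit formula for $f_\la$: a period-$1$ cycle is a fixed point $a_0$ with $a_0 \to \infty$ at the virtual center. In a $\calm_\la$ shell component the captured value is $\la$, and the asymptotic identity $f_\la(z) \to \la$ as $z$ escapes along the preferred tract gives $\la \to \infty$, so $\Omega$ is unbounded with virtual center at $\infty$. In a $\calm_\mu$ shell component the captured value is $\mu$, and the same argument gives $\mu \to \infty$, which by (\ref{eqn - mult}) is equivalent to $\la \to \rho/2$; thus $\Omega$ is bounded with virtual center at the parameter singularity $\rho/2$.

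Part (d) is where the main obstacle lies. One direction is immediate from (c): at a virtual center the limiting data $a_0 = \infty$, $a_1 = \la^*$, $a_{i+1} = f_{\la^*}(a_i)$ is exactly the data of a virtual cycle, so $\la^*$ is a virtual cycle parameter. The converse, that every virtual cycle parameter $\la^*$ of order $n$ arises as the virtual center of some shell component of period $n$, is the delicate step. I would invoke theorem 6.10 of \cite{FK} together with Corollary A of \cite{CK}. These supply transversality of the iteration at $\la^*$ (compare Definition~\ref{trans}), and transversality is exactly what is needed to guarantee that small perturbations of $\la^*$ create genuine attracting $n$-cycles whose multipliers, viewed as analytic functions of $\la$ near $\la^*$, sweep out a punctured neighborhood of $0$ in $\DD^*$. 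This places $\la^*$ on the boundary of a shell component with virtual center at $\la^*$.
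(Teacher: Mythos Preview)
Your proposal is correct and aligns with the paper's own treatment: the paper does not give a self-contained proof of this theorem but simply records it as a compilation of results from \cite{FK}, \cite{RS}, and \cite{CK}, and your sketch is a faithful expansion of exactly those citations (Koenigs/surgery for (a), the \cite{RS} derivative estimate plus proposition~6.8 of \cite{FK} for (b)--(c), and theorem~6.10 of \cite{FK} with Corollary~A of \cite{CK} for (d)).

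One small caution on part~(d): you phrase the role of \cite{FK} and \cite{CK} as ``supplying transversality,'' but in this paper transversality in the sense of Definition~\ref{trans} is established separately, later, in the Common Boundary Theorem via the techniques of \cite{CJK19}. The cited results give the virtual-center/virtual-cycle-parameter correspondence directly, so it is safer to invoke them as black boxes here rather than route the argument through transversality, which would be a forward reference. Similarly, in part~(c) your passage from ``some factor tends to $0$'' to ``$a_0(\la_k)\to\infty$'' leans on more than just the labeling convention---one needs that the other basin components cannot carry a periodic point off to infinity, which is part of the content of proposition~6.8 of \cite{FK} rather than an immediate consequence of the convention.
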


As a corollary we have
\begin{cor}\label{pre-pole centers} If $\la^*$ is a virtual center of a shell component of period $n$, there are virtual centers of  period $n+1$ accumulating on it.
\end{cor}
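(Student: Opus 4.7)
The plan is to construct an explicit sequence of virtual cycle parameters of order $n+1$ accumulating on $\la^*$ and then invoke Theorem~\ref{thm:shell components}(d) to promote each of them to a virtual center. By that same theorem $\la^*$ is itself a virtual cycle parameter, so after possibly swapping the roles of $\la$ and $\mu$ we may assume $f_{\la^*}^{\,n-1}(\la^*)=\infty$.

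First I would track the orbit in the parameter. Set $a_1(\la)=\la$ and $a_i(\la)=f_\la(a_{i-1}(\la))$ for $i=2,\dots,n$. Because $a_{n-1}(\la^*)$ is a pole of $f_{\la^*}$ while every earlier $a_i(\la^*)$ is regular, the maps $a_1,\dots,a_{n-1}$ are holomorphic on a neighborhood $U$ of $\la^*$, and $a_n(\la)=f_\la(a_{n-1}(\la))$ is meromorphic on $U$ with $a_n(\la^*)=\infty$. Now $\la$ is a virtual cycle parameter of order $n+1$ exactly when $a_n(\la)$ is a pole of $f_\la$, i.e.\ when
\[
e^{2a_n(\la)}=\la/\mu(\la).
\]

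Since $a_n$ has a pole at $\la^*$, it maps some punctured disk around $\la^*$ onto the exterior $\{|w|>R\}$ of a large disk; pick a univalent local inverse $\psi$ on a sector and substitute $w=a_n(\la)$, reducing the equation to
\[
e^{2w}=\psi(w)/\mu(\psi(w)).
\]
As $w\to\infty$ the right-hand side tends to $c^*:=\la^*/\mu(\la^*)$, which is nonzero because $\la^*\notin\{0,\rho/2\}$. The ``unperturbed'' equation $e^{2w}=c^*$ has the discrete family of roots $w_k=\tfrac12\Log c^*+k\pi i$, $k\in\ZZ$. I would then apply Rouch\'e's theorem (or the implicit function theorem) to $w\mapsto e^{2w}-\psi(w)/\mu(\psi(w))$ on a small disk about each $w_k$ for $|k|$ large, producing an honest root $w_k'$ close to $w_k$. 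Setting $\la_k:=\psi(w_k')$ gives $f_{\la_k}^{\,n}(\la_k)=\infty$, so each $\la_k$ is a virtual cycle parameter of order $n+1$, and by Theorem~\ref{thm:shell components}(d) also a virtual center of a shell component of period $n+1$; since $w_k'\to\infty$ we have $\la_k\to\la^*$.

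The main obstacle will be the Rouch\'e estimate: on a circle of fixed small radius around each $w_k$ one must show that the error $\psi(w)/\mu(\psi(w))-c^*$ is dominated by the oscillation of $e^{2w}-c^*$. Generically $a_n$ has a simple pole at $\la^*$, so $\psi(w)-\la^*=O(1/w)$ and the bound is immediate; if the pole is of higher order the only change is a local sheet count, and the essential-singularity behavior of $e^{2a_n(\la)}$ near $\la^*$ still forces infinitely many solutions.
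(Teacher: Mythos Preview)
Your argument is correct and close in spirit to the paper's: both prove the result via Rouch\'e's theorem, perturbing from the known poles of $f_{\la^*}$ to nearby solutions of the parametrized pole equation, and both finish by invoking Theorem~\ref{thm:shell components}(d). The executions differ, though. The paper stays in the $\la$-variable: it sets $h(\la)=f_\la^{\,n-1}(\la)$, writes the pole functions $p_k(\la)=\tfrac12\Log\!\big(\tfrac{\rho-2\la}{\rho}\big)+ik\pi$, and compares $\hat h=1/h$ with $\hat h-\hat p_k$ on the boundary of a small disk $V$ about $\la^*$; since $\hat p_k\to0$ uniformly on $\overline V$ while $\hat h$ has a zero at $\la^*$, Rouch\'e produces a zero $\la_k$ of $\hat h-\hat p_k$, i.e.\ a solution of $h(\la_k)=p_k(\la_k)$, for each large $|k|$. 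You instead push the problem through the change of variable $w=a_n(\la)$ and compare $e^{2w}-c^*$ with $e^{2w}-\psi(w)/\mu(\psi(w))$ near each $w_k$.

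What each buys: the paper's inversion trick sidesteps the need for a local inverse $\psi$ and the attendant bookkeeping of sectors/sheets when $a_n$ has a higher-order pole at $\la^*$; it also treats all $k$ at once on a single contour $\partial V$. Your route is more explicit about \emph{which} pole is being hit (you see the lattice $w_k=\tfrac12\Log c^*+k\pi i$ directly), at the cost of having to choose branches of $\psi$ covering both ends of the imaginary axis and of the slightly hand-wavy last paragraph; these are cosmetic issues, not gaps.
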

\begin{proof}  The poles of $f_{\la}$ are given by 
\begin{equation}\label{poles}
 p_k(\lambda)=\frac{1}{2}\Log (\frac{\rho-2\lambda}{\rho})+ik\pi, 
\end{equation}
 where $\Log$ is the branch of the logarithm with imaginary part in $[-\pi,\pi)$.  They and all their preimages are holomorphic functions of $\la$.

Let $V$ be a neighborhood of $\la^*$ in the parameter plane that does not contain any poles of $f_{\la}^{k}$ for all $1\leq k<n-1$.   Such a neighborhood exists because the poles of $f_{\la}^{k}$ form a discrete set.  The holomorphic function $h(\la)=f_{\la}^{n-1}(\la)$ maps $V$ to a neighborhood $W$ of infinity and $h(\la^*)=\infty$.  Since infinity is an essential singularity,  for $\la \in V$  and large enough $|k|$, $W$ contains infinitely many poles $p_k(\la)$ of the functions $f_{\la}(z)$;  moreover, for each $\la$, as $k \to \infty$,    $p_k(\la)$ converge to infinity. The zeroes of the functions $h_k(\la)=h(\la)-p_k(\la)$  are virtual centers of components of period $n+1$.   We want to show there is a sequence of these zeroes in $V$ converging to $\la^*$.

The functions $\hat{h}(\la)=1/h(\la)$ and $\hat{p}_k(\la)=1/p_k(\la)$ take values in a neighborhood of the origin.    Since the $p_k(\la)$ converge to infinity as $|k| \to \infty$  uniformly on $\overline{V}$ as long as $V$ is small enough,  we can find $N$ large enough so that if $k>N$ and $\la \in \partial V$, then  $|\hat{p}_k(\la)| <  |\hat{h}(\la)|$.  By Rouch\'e's theorem, we conclude $\hat{h}$ and $\hat{h} -\hat{p}_k$ have the same number of zeroes in $V$;  $\hat{h}$ has a zero at $\la^*$ 
and thus  each $\hat{h} -\hat{p}_k$ has a zero $\la_k \in V$;  it follows that  $h(\la_k) =p_k(\la_k)$ so that $\la_k$ is a virtual center of period $n+1$.
\end{proof}

\subsection{Combinatorics}\label{combinatorics}

Theorem~\ref{thm:shell components} allows us to assign a  label to each of the shell components   
 of $\calm_{\la}$ in terms of its virtual center.   To label the virtual centers we need to know that the indices of the poles are well defined.   In section~\ref{Inverse branches} we will prove lemma~\ref{invwelldef}  that says that we can find a simply connected domain $\Sigma$, containing  $\calm_{\la}$ and not containing $\calm_{\mu}$, in which,  after an initial choice, as above, of a basepoint and  a branch of the logarithm,    the poles and inverse branches of $f_{\la}$ can be labelled consistently.    The discussion here will assume that lemma.  
 
Pick a basepoint   that   is not in $\calm_{\la}$, for example,   the symmetric point $\la_0 =-\mu_0=\rho$. 
It has poles  $p_k(\la_0)$ defined by the principal branch of the logarithm.   
With the poles $p_k(\la_0)$ defined by equation~(\ref{poles}), denote the branch of $f_{\la_0}^{-1}$ that maps $\infty$ to $p_k(\la_0)$  by  $g_{\la,k}(z)=g_{\la_0,k}(z)$.   With the  choice of a fixed base point and logarithm branch,  the inverse branches are well defined since the set $\Sigma$ (to be defined in section~\ref{Inverse branches})  is simply connected.  

We use these branches to define labels for the prepoles of all orders, and thus for labels of the virtual cycle parameters.  Because of part [d] of theorem~\ref{thm:shell components} each virtual cycle parameter is a virtual center of a shell component so the label of the virtual center  defines a label for the shell component.    
 
 The formula for  $p_{k}(\la)$ shows that the poles are injective functions of  $\la$ in $\Sigma$.   Let 
 \[ \calv_1=\{ \la^*_k \in \Sigma \, | \,  g_{\la^*,k}(\infty)=  \la^* \}. \]
     That is, $\calv_1$ is the set of $\la^*_k$ such that $f_{\la^*_k}(\la^*_k)=\infty $. It  is 
 the set of  virtual cycle parameters of order $1$ and hence virtual centers of shell components $\Omega_2$ of period $2$.   We assign the label $k$ to each point in $\calv_1$ and the same label to  the component for which it is the virtual center.   
 
 The prepoles $p_{k_1k_2}(\la)=g_{\la,k_2}(p_{k_1}(\la))$ are defined for all $\la \in \Sigma \setminus \calv_1$.  Since they  are holomorphic functions of $\la$ with non-vanishing derivative,  each $g_{\lambda, k}$ is an injective function of both  $\lambda$ and $z$.  Next, we inductively define the sets of virtual cycle parameters of order $n-1$ with labelled points by 
  \[ \calv_{n-1}=\{ \la^*_{k_{n-1} \ldots k_1} \in \Sigma \setminus \cup_{i=1}^{n-2} \calv_i \, | \,  g_{\la^*_{k_{n-1} \ldots k_1},k_{n-1} \ldots k_1}(\infty)=  \la^*_{k_{n-1} \ldots k_1}\}. \]
   The prepoles $p_{k_{n-1} \ldots k_1}(\la)$ are defined for all $\la \not\in \calv_{n-1}$ and, as above, move injectively.  
   
     We now assign the label  $k_{n-1} \ldots k_1$ to the shell component of order $n$ for which $\la^*_{k_{n-1} \ldots k_1}$ is the virtual center.  
    
\begin{definition}\label{itin}    We call the label $\mathbf k_n=k_nk_{n-1} \ldots k_1$ assigned to each prepole and each virtual cycle parameter its {\em itinerary}.
\end{definition}

We can also use the labelling of the inverse branches to assign an itinerary to each attractive cycle. 
 \begin{definition}\label{itin cycle}
    For simplicity we suppress the dependence on $\la$ and assume the shell component is in $\calm_{\la}$.  Suppose $f^n(a_0)=a_0$ for $n \geq 1$,  where, by our numbering convention in part [c] of theorem~\ref{thm:shell components}, $a_0$ is in the asymptotic tract of $\la$ and $a_j=f(a_{j-1})$, $j=1, \ldots n$.  Then for some $k_j$, $a_{j-1}= g_{k_j}(a_{j})$.   In fact there is  a unique sequence $\{k_1,\ldots, k_{n}\}$ such that
 \[  a_0= g_{k_{n}} \circ \ldots g_{k_2}   \circ g_{k_{1}}( a_0).\]
We say  the cycle $\mathbf a$ has {\em itinerary}
$ \mathbf k_n=  k_{n}k_{n-1}\ldots k_2 k_1$.
\end{definition}
 
\begin{prop}\label{compitin} Let $\Omega_n$ be a shell component and suppose for $\la_0 \in \Omega_n$, the cycle $\mathbf a(\la_0)$ has itinerary $k_nk_{n-1} \ldots k_1$.   Then  for every $\la \in \Omega_n$, the itinerary of $\mathbf a(\la)$ is of the form $k_{0,j} k_{n-1} \ldots k_1$ for some $j \in \ZZ$. 
\end{prop}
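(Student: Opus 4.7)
The plan has three stages. First I would set up a holomorphic, single-valued parametrization of the attracting $n$-cycle on $\Omega_n$. Second, using the explicit form of the inverse branches, I would show that each itinerary digit $k_j(\lambda)$ is a locally constant, hence globally constant, function of $\lambda\in\Omega_n$. Third, I would identify the constant tail $k_{n-1}\ldots k_1$ with the label of the virtual center of $\Omega_n$ from Section~\ref{combinatorics}, and interpret the leading digit $k_n=k_{0,j}$ as the index distinguishing $\Omega_n$ among the shell components accumulating at the same virtual cycle parameter.

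For stage one, since $\nu_\lambda\in\DD^*$ throughout $\Omega_n$ each cycle point is a non-degenerate fixed point of $f_\lambda^n$, so the implicit function theorem provides local holomorphic continuations $a_j(\lambda)$; simple connectivity of $\Omega_n$ (Theorem~\ref{thm:shell components}(a)) patches these into single-valued holomorphic maps $a_0,\dots,a_{n-1}\colon\Omega_n\to\CC$, and the convention from Theorem~\ref{thm:shell components}(c) singling out $a_0$ as the cycle point lying in the unbounded immediate basin component containing the asymptotic tract of $\lambda$ persists consistently along this continuation.

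For stage two I would solve $f_\lambda(z)=w$ explicitly; the resulting relation $e^{2z}=\lambda(\mu-w)/\mu(\lambda-w)$ gives the inverse branches in closed form as
\[
g_{\lambda,k}(w)=\tfrac{1}{2}\log\frac{\lambda(\mu-w)}{\mu(\lambda-w)}+ik\pi,\qquad k\in\ZZ,
\]
with the branch of $\log$ and the $k$-labeling fixed via the base point on the simply connected parameter domain $\Sigma\supset\Omega_n$ of Lemma~\ref{invwelldef}. Isolating $k_j$ from $a_{j-1}(\lambda)=g_{\lambda,k_j(\lambda)}(a_j(\lambda))$ expresses it as $\frac{1}{i\pi}$ times an explicit continuous function of $\lambda$, continuity holding because cycle points never coincide with the omitted asymptotic values $\lambda,\mu$. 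An integer-valued continuous function on the connected set $\Omega_n$ must be constant, so each $k_j$ is constant on $\Omega_n$.

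For stage three I would send $\lambda\to\lambda^*\in\partial\Omega_n$, the virtual center, and invoke Theorem~\ref{thm:shell components}(c): $a_0(\lambda)\to\infty$, $a_1(\lambda)\to\lambda^*$, and by continuity of forward iteration $a_i(\lambda)\to f_{\lambda^*}^{\,i-1}(\lambda^*)$ for $i=1,\dots,n-1$, with $a_{n-1}(\lambda)\to p_{k_n}(\lambda^*)$, a pole since $f_{\lambda^*}^{\,n-1}(\lambda^*)=\infty$. The constant digits $k_{n-1},\dots,k_1$ then record the successive inverse-branch hops taking one backward from this pole through $\lambda^*$ via successive prepoles, which by the inductive construction in Section~\ref{combinatorics} is precisely the label $\mathbf k_n=k_{n-1}\ldots k_1$ assigned to $\lambda^*$ as a virtual cycle parameter. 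The leading digit $k_n$ records which pole $a_{n-1}$ limits to; Corollary~\ref{pre-pole centers} shows that distinct integer choices for this pole produce distinct shell components accumulating at $\lambda^*$, so writing $k_n=k_{0,j}$ with $j\in\ZZ$ yields the claimed form. The main obstacle will be making the continuity–integer argument in stage two rigorous: it requires both that the particular branch of $\log$ fixed at the base point of $\Sigma$ extends consistently along every path in $\Omega_n$—exactly where simple connectivity and Lemma~\ref{invwelldef} are indispensable—and that the cycle points stay uniformly away from $\lambda$ and $\mu$, which holds because periodic points, being iterated images under $f_\lambda$, cannot equal the omitted asymptotic values.
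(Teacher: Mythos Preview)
Your stage 2 proves too much, and what it would prove is actually false. You conclude that \emph{every} digit $k_j$ is constant on $\Omega_n$, but the proposition asserts only that the tail $k_{n-1}\ldots k_1$ is constant while the leading digit can vary---and the paper's proof makes clear that it genuinely does vary as $\lambda$ moves between sheets of the universal cover $\nu_\lambda:\Omega_n\to\DD^*$. Concretely, $a_0(\lambda)$ lies in the unbounded component $D_0$ containing the asymptotic tract, and as $\lambda$ ranges over $\Omega_n$ the imaginary part of $a_0(\lambda)$ is unbounded; since the ranges of the branches $g_{\lambda,k}$ are horizontal strips of height $\pi$, the index $k$ for which $a_0=g_{\lambda,k}(a_1)$ must change.

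The gap in your continuity argument is this: the label $k_j$ is defined through the \emph{fixed} branch $\mathrm{Log}$ chosen at the basepoint of $\Sigma$, so that $k_j(\lambda)=\frac{1}{i\pi}\bigl(a_{j-1}(\lambda)-\tfrac12\,\mathrm{Log}\,W(\lambda)\bigr)$ with $W(\lambda)=\frac{\lambda(\mu-a_j(\lambda))}{\mu(\lambda-a_j(\lambda))}$. This expression jumps whenever $W(\lambda)$ crosses the branch cut of $\mathrm{Log}$; the condition $a_j\neq\lambda,\mu$ that you check only keeps $W$ away from $0$ and $\infty$, not away from the cut. If instead you analytically continue $\log W(\lambda)$ along paths in the simply connected $\Omega_n$, you do get a continuous function---but that function differs from $\mathrm{Log}\,W(\lambda)$ by $2\pi i$ times the winding number of the path $t\mapsto W(\lambda(t))$ about $0$, and it is precisely this winding that changes the label. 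Lemma~\ref{invwelldef} guarantees the branches $g_{\lambda,k}$ are single-valued in the \emph{parameter} $\lambda$; it says nothing about continuity of $g_{\lambda,k}(w)$ when $w=a_j(\lambda)$ is allowed to cross the branch cut in the dynamical plane.

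The paper's argument separates the two behaviors geometrically rather than through a formula: for $j=1,\dots,n-1$ the restriction $f_\lambda:D_j\to D_{j+1}$ is a conformal isomorphism (neither asymptotic value lies in $D_{j+1}$), so the inverse branch $D_{j+1}\to D_j$ is uniquely determined and its label is locally, hence globally, constant on the connected set $\Omega_n$. By contrast $f_\lambda:D_0\to D_1$ is infinite-to-one because $D_1$ contains the asymptotic value $\lambda$, so $a_1$ has infinitely many preimages $a_{0,j}\in D_0$ and the one that belongs to the cycle changes with $\lambda$. Your stage~3 also misreads Corollary~\ref{pre-pole centers}: that result concerns distinct shell components of period $n+1$ accumulating at the virtual center of $\Omega_n$, not different values of the leading digit for points within the single component $\Omega_n$.
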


\begin{proof} If the component of the basin $\mathbf a(\la)$ containing $a_j(\la)$ is denoted by $D_j(\la)$, then for  $j=1, \ldots n-1$,  $f_{\la}:D_j(\la) \rightarrow D_{j+1}(\la)$ is one to one.  Inside  $\Omega_n$, the points of the periodic  cycle  move holomorphically and are related by the  inverse branches $g_{\la,k_j} : D_{j+1}(\la) \rightarrow D_j(\la)$.  The branch is the same  for all $\la  \in \Omega_n$ since it  is simply connected;  in it the $g_{\la,k_j}$ are quasiconformally conjugate and the $a_j(\la)$ move holomorphically.  At the last step in the cycle,  however,  the map $f_{\la}:D_0(\la) \rightarrow D_{1}(\la)$   is infinite to one and so $a_1(\la)$ has infinitely many inverses, $a_{0,j}(\la) \in D_0(\la)$.  They are all in the asymptotic tract of $\la$ but only one of them can belong to the cycle. Thus although the inverse branch $g_j=g_{0,j}$ is well defined for each $\la$, the branch that defines the cycle  changes as $\la$ moves in $\Omega_n$.  
\end{proof}

Above we assigned a label, or itinerary to the virtual center of each shell component.   We now address the questions of the uniqueness of these labels and their relation to the itineraries of their attracting cycles.  As we stated above, this is based on lemma~\ref{invwelldef}, which will be proved later, that the inverse branches are defined as single valued functions of $\la$.

\begin{prop}\label{label shells} Every shell component $\Omega_n \in \calm_{\la}$ and $\Omega_n' \in \calm_{\mu}$, $n>1$,  has a unique label defined  by the itinerary of its virtual center $\la^*$, a pre-pole of order $n-1$  where $n$ is the minimal such integer.
\end{prop}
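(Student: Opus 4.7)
The plan is to first verify that the virtual center $\la^*$ of a shell component $\Omega_n \subset \calm_\la$ of period $n>1$ is a pre-pole of order exactly $n-1$, and then to show that the itinerary $\mathbf{k}_{n-1}$ assigned to $\la^*$ in Section~\ref{combinatorics} depends only on $\Omega_n$.

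For the first step I would fix $\la_k \in \Omega_n$ with $\la_k \to \la^*$ and let $\mathbf{a}(\la_k) = \{a_0(\la_k), \ldots, a_{n-1}(\la_k)\}$ be the period-$n$ attracting cycle, indexed so that $a_0$ lies in the basin component containing the asymptotic tract of $\la$. Theorem~\ref{thm:shell components}(c) then yields $a_0(\la_k) \to \infty$ and $a_1(\la_k) \to \la^*$. For the remaining cycle points $a_j(\la_k)$ with $j = 2, \ldots, n-1$, which lie in basin components on which $f_\la$ is a biholomorphism onto the next component of the cycle, continuity of the cycle under perturbation will give $a_j(\la_k) \to f_{\la^*}^{j-1}(\la^*)$ with each limit a finite element of $\CC$. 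Applying $f_{\la_k}$ once more to $a_{n-1}(\la_k)$ and passing to the limit should yield $f_{\la^*}^{n-1}(\la^*) = \infty$, so $\la^*$ will be a pre-pole of order at most $n-1$; the finiteness of the intermediate iterates rules out any smaller order and delivers minimality.

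For the second step, once the basepoint $\la_0=\rho$ and the branch of $\log$ are fixed, Lemma~\ref{invwelldef} will guarantee that each inverse branch $g_{\la,k}$ is a single-valued holomorphic function of $\la$ throughout the simply connected region $\Sigma$. Hence the composed prepole functions
\[
  p_{k_{n-1}\ldots k_1}(\la) = g_{\la,k_{n-1}} \circ \cdots \circ g_{\la,k_1}(\infty)
\]
will be single-valued holomorphic on $\Sigma \setminus \bigcup_{j<n-1}\calv_j$ and depend injectively on $\la$, so the equation $p_{\mathbf{k}_{n-1}}(\la^*) = \la^*$ has a unique solution $\mathbf{k}_{n-1}$. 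Combined with the uniqueness of $\la^*$ on $\partial\Omega_n$ from Theorem~\ref{thm:shell components}(b), this produces a well-defined label for $\Omega_n$. For $\Omega_n' \subset \calm_\mu$ the argument will be identical after replacing $\la$ by $\mu$ throughout and appealing to the inversion symmetry of Proposition~\ref{calmu and calla}.

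The main obstacle will be the finiteness claim in the first step: one must be sure that no cycle point other than $a_0$ escapes to $\infty$ along the sequence $\la_k \to \la^*$. This should follow from part (c) of Theorem~\ref{thm:shell components}, which singles out $a_0$ as the distinguished cycle point tending to $\infty$, so that the remaining $a_j(\la_k)$ stay in a compact subset of $\CC$ determined by the persistent basin components that carry them. Once this finiteness is secured, the rest of the argument is a straightforward application of the already-established uniqueness statements.
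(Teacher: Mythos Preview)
Your plan is close in spirit to the paper's, but the paper avoids the very obstacle you single out by running the argument \emph{backward} rather than forward. Instead of iterating from $a_1(\la_k)\to\la^*$ and hoping that $a_2,\ldots,a_{n-1}$ stay finite, the paper invokes Proposition~\ref{compitin}: throughout $\Omega_n$ the cycle itinerary has the fixed form $k_{0,j}k_{n-1}\cdots k_1$, with only the first entry varying. Thus $a_{n-1}(\la)=g_{\la,k_1}(a_0(\la))$, $a_{n-2}(\la)=g_{\la,k_2}(a_{n-1}(\la))$, and so on, each via a fixed inverse branch. Letting $\la\to\la^*$ with $a_0(\la)\to\infty$ sends $a_j(\la)$ to the prepole $p_{k_{n-j}\cdots k_1}(\la^*)$, automatically finite; in particular $a_1\to p_{k_{n-1}\cdots k_1}(\la^*)=\la^*$, and both the itinerary and the minimality of the order $n-1$ are read off directly. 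Your forward route is not wrong, but the justification you offer for finiteness---that Theorem~\ref{thm:shell components}(c) ``singles out $a_0$''---is not what that theorem says; part~(c) asserts $a_0\to\infty$ and $a_1\to\la^*$ but is silent about the other $a_j$. The backward argument via Proposition~\ref{compitin} closes this gap without further work. The paper's separate remark on minimality (only one asymptotic value is attracted, unlike the tangent family) addresses why the period $n$ is genuine rather than a doubled period.

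There is also a slip in your second step: you argue that the prepole functions $\la\mapsto p_{\mathbf{k}}(\la)$ are injective in $\la$, and conclude that $\mathbf{k}_{n-1}$ is uniquely determined by $\la^*$. That is the wrong direction of injectivity. What you need is that, for the fixed value $\la^*$, the assignment $\mathbf{k}\mapsto p_{\mathbf{k}}(\la^*)$ is injective; this follows from the fact that distinct inverse branches $g_{\la^*,k}$ have disjoint images, not from anything about variation in $\la$.
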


Because the shell components of period $1$ have virtual centers that do not belong to the parameter space, we cannot label them in this way.  There are only two such points, $\rho/2$ and $\infty$ and hence only two such components with no label.  In order to have a label for every component, we  arbitrarily assign the label $\infty$ to these components.
\begin{proof}
The boundary of each  shell component $\Omega_n$ contains one and only one virtual center $\la^*$ \footnote{It will follow from the Common Boundary Theorem that it is on the boundary of only one shell component.  This is different from the tangent family where pairs of shell components share virtual centers. See e.g. \cite{CJK19}} and the label  $ \mathbf k_{n-1}=  k_{n-1}k_{n-2}\ldots k_1$ of the virtual center is its itinerary.       
Let $V$ be  a neighborhood of $\la^*$ and let $W=\Omega_n \cap V$.  By proposition~\ref{compitin}, the itineraries of the points in $W$ agree except for their first entry.  By proposition 6.8 of \cite{FK}, as $\la \in W$ tends to the virtual center,  the point $a_0(\la) = g_j(a_1(\la))$ of the cycle tends to infinity and the point $a_1(\la)$ tends to the virtual cycle parameter $\la^*$, a pre-pole of order $n-1$ with 
itinerary, $ \mathbf k_{n-1}=  k_{n-1}k_{n-2}\ldots k_1$.

Note that because $\lambda \in \mathcal{M}_\lambda$ or $\mathcal{M}_\mu$,  the cycle $\mathbf a(\la)$ attracts only one of     the asymptotic asymptotic values. Therefore unlike the tangent family, where both asymptotic values can be attracted by a single cycle of double the period, $n-1$ is minimal.
\end{proof}

The proof of Corollary~\ref{pre-pole centers} also implies that
\begin{prop}\label{labels acc} Let $\la^*$ be the virtual center of a shell component  $\Omega_n$ and let $\Omega_{n+1,i}$, be a sequence of components whose virtual  centers $\la^*_i$   converge to $\la^*$ as $i$ goes to infinity.   If the itinerary of $\la^*$  is given by $  \mathbf k_{n-1}=  k_{n-1}k_{n-2}\ldots k_1 $, the itineraries of the $\la^*_i$ are given by $\mathbf k_{n,i}= k_{n-1}k_{n-2}\ldots k_1 k_{0,i} $.
\end{prop}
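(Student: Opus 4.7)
The plan is to refine the proof of Corollary~\ref{pre-pole centers} to track the itinerary of the new virtual centers it produces. Writing the itinerary of $\la^*$ as $\mathbf{k}_{n-1}=k_{n-1}k_{n-2}\cdots k_1$, the defining relation is $\la^*=g_{\la^*,\,k_{n-1}\cdots k_1}(\infty)$; by lemma~\ref{invwelldef}, the composition $G_\la:=g_{\la,\,k_{n-1}\cdots k_1}$ is single-valued and jointly holomorphic in $(\la,z)$ on a simply connected neighborhood of $\la^*$ in $\Sigma$ avoiding the sets $\calv_j$ for $j<n-1$. First I would choose a disk $V\subset\Sigma$ around $\la^*$ on which the prepole function $P(\la)=G_\la(\infty)=p_{k_{n-1}\cdots k_1}(\la)$ is holomorphic with $P(\la^*)=\la^*$; since $G_\la$ is an $(n-1)$-fold composition of inverse branches of $f_\la$ it satisfies $f_\la^{n-1}\circ G_\la=\mathrm{id}$, so for $w$ near $\infty$ and $\la\in V$ the equation $f_\la^{n-1}(\la')=w$ has the unique solution $\la'=G_\la(w)$ inside $V$, provided $V$ is small enough that $V\subset G_\la(\{w:|w|>R\})$ for all $\la\in V$ and some large $R$.

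Next I would invoke the Rouch\'e argument of Corollary~\ref{pre-pole centers} applied to $\hat h(\la)=1/f_\la^{n-1}(\la)$ and $\hat p_k(\la)=1/p_k(\la)$: for every sufficiently large $|k|$, there is a unique $\la_k^*\in V$ with $f_{\la_k^*}^{n-1}(\la_k^*)=p_k(\la_k^*)$, and $\la_k^*\to\la^*$ as $|k|\to\infty$. Conversely, any given sequence $\la^*_i\to\la^*$ of order-$(n+1)$ virtual centers eventually lies in $V$; by continuity $f_{\la^*_i}^{n-1}(\la^*_i)$ must tend to $\infty$, so it equals a pole $p_{k_{0,i}}(\la^*_i)$ for some $k_{0,i}$ with $|k_{0,i}|\to\infty$, and by uniqueness in the Rouch\'e count $\la^*_i$ coincides with $\la_{k_{0,i}}^*$.

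The itinerary is then read off by applying the section $G_{\la^*_i}$ to the identity $f_{\la^*_i}^{n-1}(\la^*_i)=g_{\la^*_i,\,k_{0,i}}(\infty)$:
\[ \la^*_i \;=\; G_{\la^*_i}\!\bigl(g_{\la^*_i,\,k_{0,i}}(\infty)\bigr) \;=\; g_{\la^*_i,\,k_{n-1}\cdots k_1}\bigl(g_{\la^*_i,\,k_{0,i}}(\infty)\bigr) \;=\; g_{\la^*_i,\,k_{n-1}\cdots k_1\,k_{0,i}}(\infty), \]
which by definition~\ref{itin} is exactly the statement that $\la^*_i$ has itinerary $k_{n-1}\cdots k_1\,k_{0,i}$. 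The hard part I anticipate is verifying that $G_\la$ is the correct local section of $f_\la^{n-1}$ for every $\la\in V$ (not just at $\la^*$), so that every Rouch\'e solution factors through the inverse branch dictated by $\la^*$'s itinerary; this should follow by combining the global single-valuedness of inverse branches on $\Sigma$ from lemma~\ref{invwelldef} with a shrinking argument on $V$ forcing the only preimage of $p_k(\la)$ under $f_\la^{n-1}$ that lies in $V$ to be $G_\la(p_k(\la))$.
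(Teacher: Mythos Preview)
Your proposal is correct and follows essentially the same route as the paper: both define the composed inverse branch $G_\la=g_{\la,k_{n-1}\cdots k_1}$ on a neighborhood $V$ of $\la^*$, invoke the Rouch\'e argument of Corollary~\ref{pre-pole centers} to locate the nearby order-$n$ virtual centers as solutions of $f_\la^{n-1}(\la)=p_k(\la)$, and then read off the itinerary by identifying the extra inverse branch $g_{\la,k_{0,i}}$. Your version is in fact more careful than the paper's, which conflates the neighborhood $W$ of infinity in the dynamical plane with the location of the $\la_i^*$ in parameter space; your explicit check that $G_\la$ is the unique local section of $f_\la^{n-1}$ landing in $V$ fills exactly the gap needed to justify the factorization $\la_i^*=G_{\la_i^*}\bigl(g_{\la_i^*,k_{0,i}}(\infty)\bigr)$.
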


\begin{remark}    There is an interesting duality here.   As we approach the virtual center from inside a shell component of order $n$, we are taking a limit of cycle itineraries; the first entry in the itinerary (corresponding to the last inverse branch applied) disappears. Thus an itinerary with $n$ entries becomes one with $n-1$ entries.  However, if we consider the labels of the shell components of order $n+1$ approaching the shell component of order $n$,  it is the last entry (corresponding to the first inverse branch applied) that disappears in the limit.
 \end{remark}

 \begin{proof} 
As above, 
 let $V$ be a small neighborhood of  $\la^*$.  We may assume it contains no virtual center of order less than $n-1$.  The functions  $g_{\la,k_j}$ that define the virtual cycle $a_j(\la^*)$, $j=1, \ldots n-1$, are defined in  $V \cap \Omega_n$ where they track the attracting cycle.  They   also extend to all of $V \setminus \{\la^*\}$ by analytic continuation.  Also for  $\la \in V \cap \Omega_n$,  the functions $a_{0,i}(\la)=g_{\la,k_{0,i}}(a_1(\la))$ are defined for all $i$ but for only one $i$ does it belong to the attracting cycle.  All of these functions extend to $V \setminus \{\la^*\}$.    
 
  Now let $W$ be a neighborhood of infinity and 
 let $G(\la,z)$ be a map from $V \times W$ to $\CC$ defined by $g_{\la,k_{n-1}} \circ \ldots \circ g_{\la,k_1}(z)$.   By corollary~\ref{pre-pole centers} 
 the neighborhood $W$ contains the virtual centers $\la_i^*$ of a sequence of shell components $\Omega_{n+1,i}$ with limit $\la^*$.  These are  poles $p_i^*$ of $f_{\la_i^*}^{n-1}$ so we can find inverse branches  of $f_{\la}$, which we denote by $g_{\la,k_{0,i}}$, such that $p_i^*=g_{\la^*_i,k_{0,i}}(\infty)$.
 It then follows that the itineraries of the $\la_i^*$ are $\mathbf k_{n,i}=  k_{n-1}k_{n-2}\ldots k_1 k_{0,i}$ as claimed.
\end{proof}

 Thus  the combinatorics of the prepoles enable us to   label   each shell component $\Omega_n \in \calm_{\la}$ and $\Omega_n' \in \calm_{\mu}$ by the itinerary of its virtual center. 
   If $  \mathbf k_{n-1}= k_{n-1}k_{n-2}\ldots k_1 $ is the itinerary of the virtual center of a shell component of period $n$, and we want to emphasize it, we write $\Omega_{\mathbf k_{n-1}}$ or $\Omega_{\mathbf k_{n-1}}'$.   

The above discussion, modulo the proof of lemma~\ref{invwelldef},  gives us a proof of   the Combinatorial Structure Theorem:

\medskip
\begin{starthm}[Combinatorial Structure Theorem]~\label{main2}
 The virtual cycle parameters $\lambda_{{\mathbf k}_{n}}$ of order $n$ can be labelled by  sequences ${\mathbf k}_n=k_n k_{n-1} \ldots k_1$,
where $k_i \in \ZZ$, in such a way that each of the parameters $\lambda_{{\mathbf k}_{n}}$ is an accumulation point in $\CC$
of a sequence of parameters $\lambda_{{\mathbf k}_{n+1}} $ of order $n+1$, where ${\mathbf  k}_{n+1} =k_{n}k_{n-1} \ldots k_1k_{0,j} $, $j\in \ZZ$.
This combinatorial description  of the virtual cycle parameters determines combinatorial descriptions of the sets $\calm_{\la}$ and $\calm_{\mu}$. 
\end{starthm}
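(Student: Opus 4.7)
The plan is to assemble the results already developed in Section~\ref{combinatorics}; in effect the statement is an omnibus reformulation of Propositions~\ref{label shells} and~\ref{labels acc} together with Corollary~\ref{pre-pole centers}. First I would invoke lemma~\ref{invwelldef} to fix a simply connected domain $\Sigma$ containing $\calm_{\la}$ (and, symmetrically, one containing $\calm_{\mu}$), on which the poles $p_k(\la)$ and every finite composition of inverse branches $g_{\la,k_n}\circ\cdots\circ g_{\la,k_1}$ extend to single-valued holomorphic functions of $\la$. This is what makes the labelling consistent: each pre-pole of order $n$ receives a well-defined itinerary ${\mathbf k}_n=k_n k_{n-1}\ldots k_1\in\ZZ^n$, recording the sequence of inverse branches used to produce it from $\infty$.

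Next I would inductively define, as in the text, the set $\calv_n$ of virtual cycle parameters of order $n$ as the set of fixed points $\la^*_{{\mathbf k}_n}$ of $g_{\la,k_n}\circ\cdots\circ g_{\la,k_1}$ acting on $\infty$. By part (d) of Theorem~\ref{thm:shell components}, each such $\la^*_{{\mathbf k}_n}$ is the virtual center of a shell component $\Omega_{{\mathbf k}_n}$, and by Proposition~\ref{label shells} the assignment ${\mathbf k}_n\mapsto\la^*_{{\mathbf k}_n}$ is injective, so the itinerary is a bona fide label rather than merely a tag. Since every hyperbolic component in $\calm_{\la}^0\cup\calm_{\mu}^0$ is a shell component (Section~\ref{sec:shell components}) and every shell component has a unique virtual center on its boundary, this bijection between itineraries and virtual centers descends to a bijection between itineraries and shell components, which is the combinatorial description of $\calm_{\la}$ and $\calm_{\mu}$ asserted in the last sentence.

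To finish, I would prove the accumulation claim. Corollary~\ref{pre-pole centers} already produces, in a prescribed neighborhood $V$ of any virtual center $\la^*_{{\mathbf k}_n}$, infinitely many zeros $\la_j$ of the holomorphic functions $\hat h(\la)-\hat p_{k_{0,j}}(\la)$, obtained by Rouch\'e's theorem and converging to $\la^*_{{\mathbf k}_n}$ as $|j|\to\infty$; each such $\la_j$ satisfies $f_{\la_j}^{n}(\la_j)=\infty$ and is thus a virtual cycle parameter of order $n+1$. Proposition~\ref{labels acc} then identifies its itinerary as ${\mathbf k}_{n+1}=k_n k_{n-1}\ldots k_1 k_{0,j}$: the first $n$ entries are inherited from the limit point via holomorphic extension of the branches $g_{\la,k_j}$ across $V\setminus\{\la^*\}$, and the terminal entry $k_{0,j}$ is precisely the label of the pole $p_{k_{0,j}}(\la_j)$ that $f_{\la_j}^{n-1}(\la_j)$ hits.

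I expect the main obstacle to be the still-pending lemma~\ref{invwelldef}: everything here rests on the assertion that a simply connected $\Sigma\supset\calm_{\la}$ exists on which no monodromy obstructs the labelling of inverse branches, and in particular that $\Sigma$ can be taken to avoid $\calm_{\mu}$ and the parameter singularities $\{0,\rho/2\}$. Once that lemma is in place, the present theorem reduces to citing Propositions~\ref{label shells} and~\ref{labels acc}, so the combinatorial content is already established; the remaining bookkeeping is to keep track of whether ${\mathbf k}_n$ labels a point in $\calm_{\la}$ or $\calm_{\mu}$, which is handled by the involution of Proposition~\ref{calmu and calla}.
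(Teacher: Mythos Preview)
Your proposal is correct and matches the paper's own argument essentially line for line: the paper states explicitly that ``the above discussion, modulo the proof of lemma~\ref{invwelldef}, gives us a proof of the Combinatorial Structure Theorem,'' where ``the above discussion'' is precisely the inductive construction of $\calv_n$, Corollary~\ref{pre-pole centers}, and Propositions~\ref{label shells} and~\ref{labels acc} that you cite. The only minor imprecision is your attribution of injectivity of ${\mathbf k}_n\mapsto\la^*_{{\mathbf k}_n}$ to Proposition~\ref{label shells}; that proposition gives uniqueness of the label for a given component, while the converse (one component per itinerary) is the Corollary to the Common Boundary Theorem proved later---but the theorem as stated does not actually require this direction, so no gap results.
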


  \subsection{Parameter space pictures}
   \begin{figure}
     \centering
  \includegraphics[width=5in]{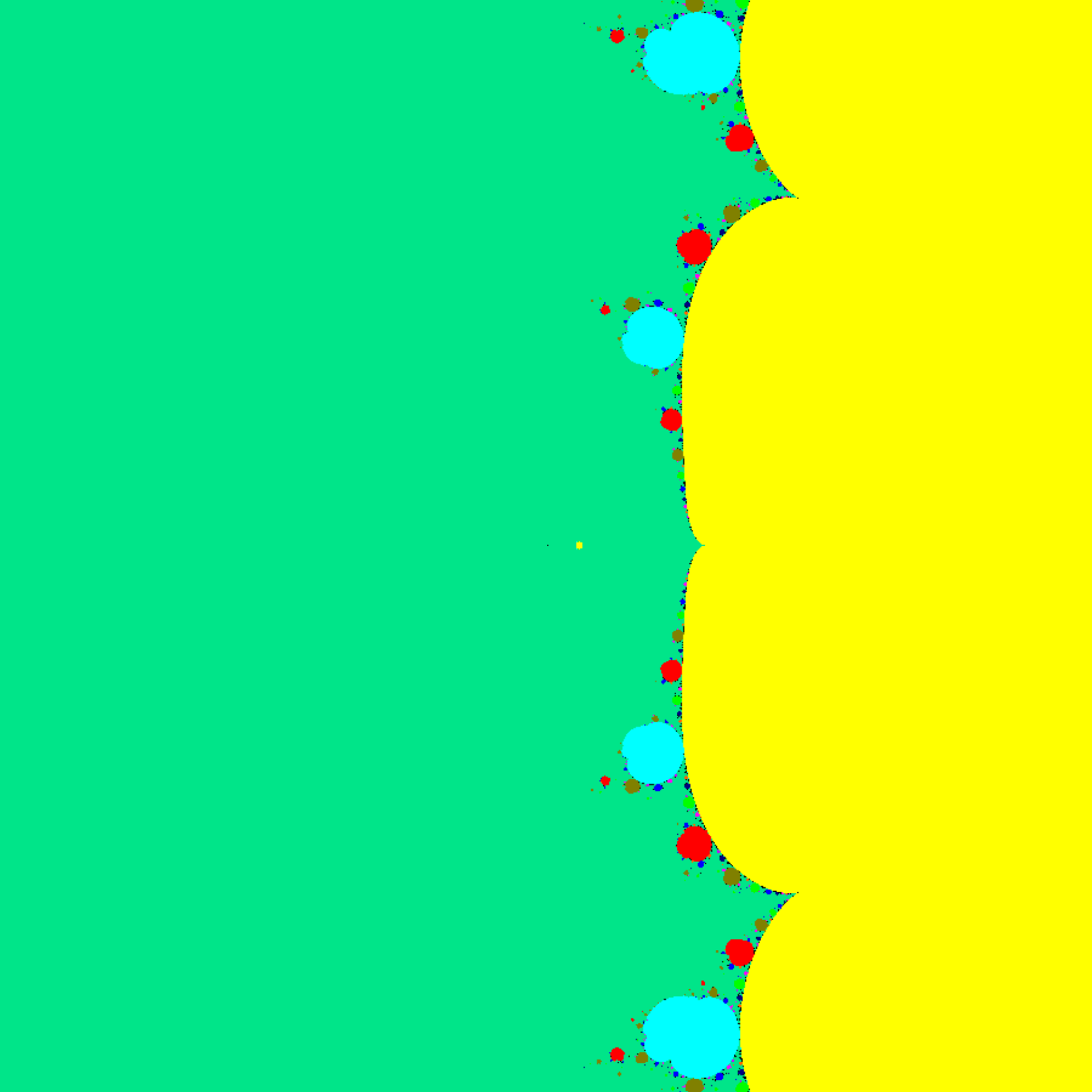}
  \caption{The $\la$ plane divided into the shift locus and shell components.  The green region represents the shift locus $\cals$. The regions $\calm_{\la}$ and $\calm_{\mu}$ are colored by the period of the component: period $1$ is yellow , period $2$ is cyan, period $3$ is red, etc.  The coloring is not visible for $\calm_{\mu}$ because it is so small. }
  \label{lambdaplane}
\end{figure}

   \begin{figure}
  \centering
  \includegraphics[width=5in]{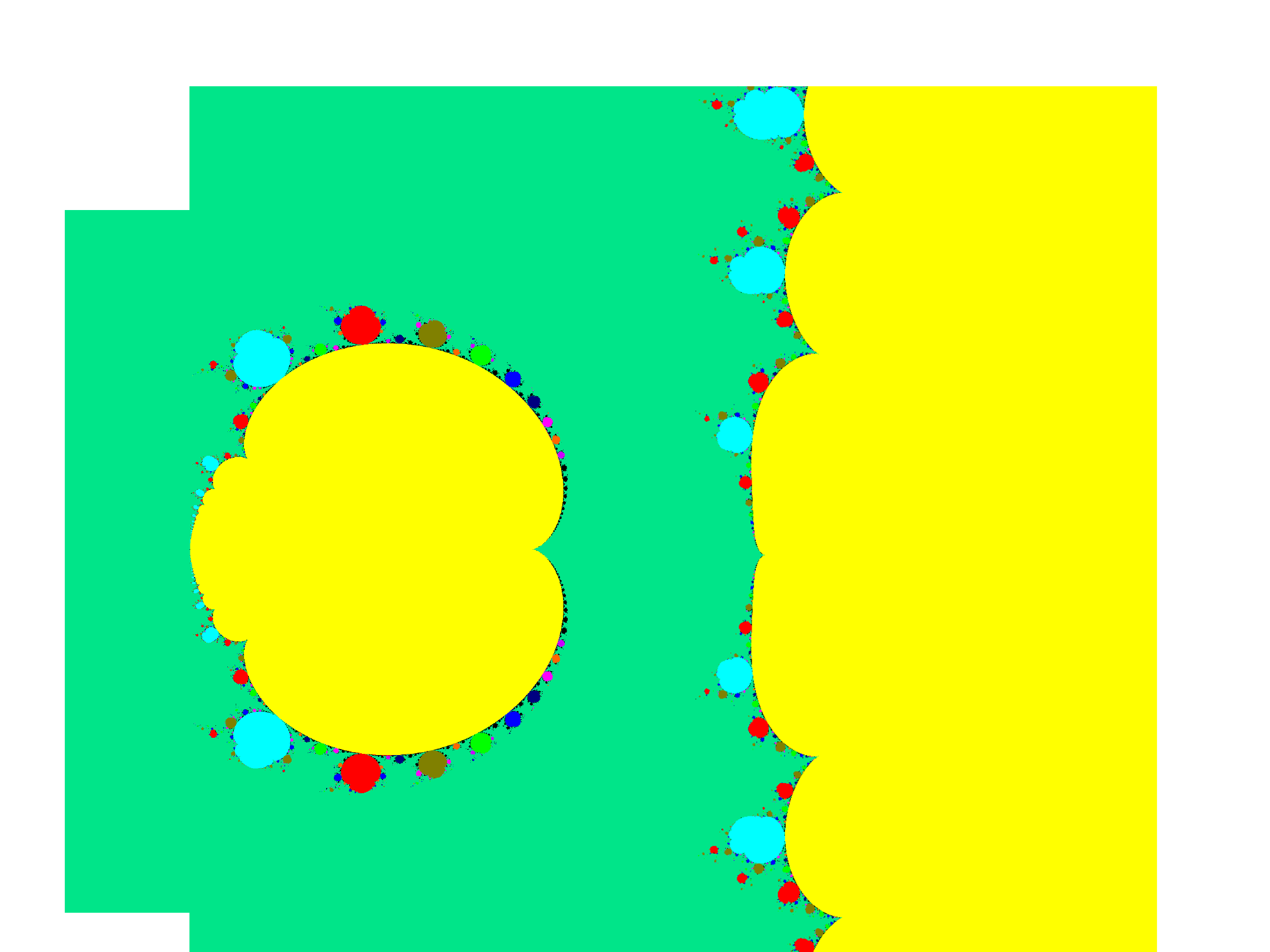}
  \caption{Blow up of $\calm_{\mu}$ placed near $\calm_{\la}$ for comparison. The coloring scheme is the same as in figure~\ref{lambdaplane} and is now visible in   the  blown up $\calm_{\mu}$.}\label{calm}
\end{figure}

\begin{figure}
 \centering
  \includegraphics[width=4in]{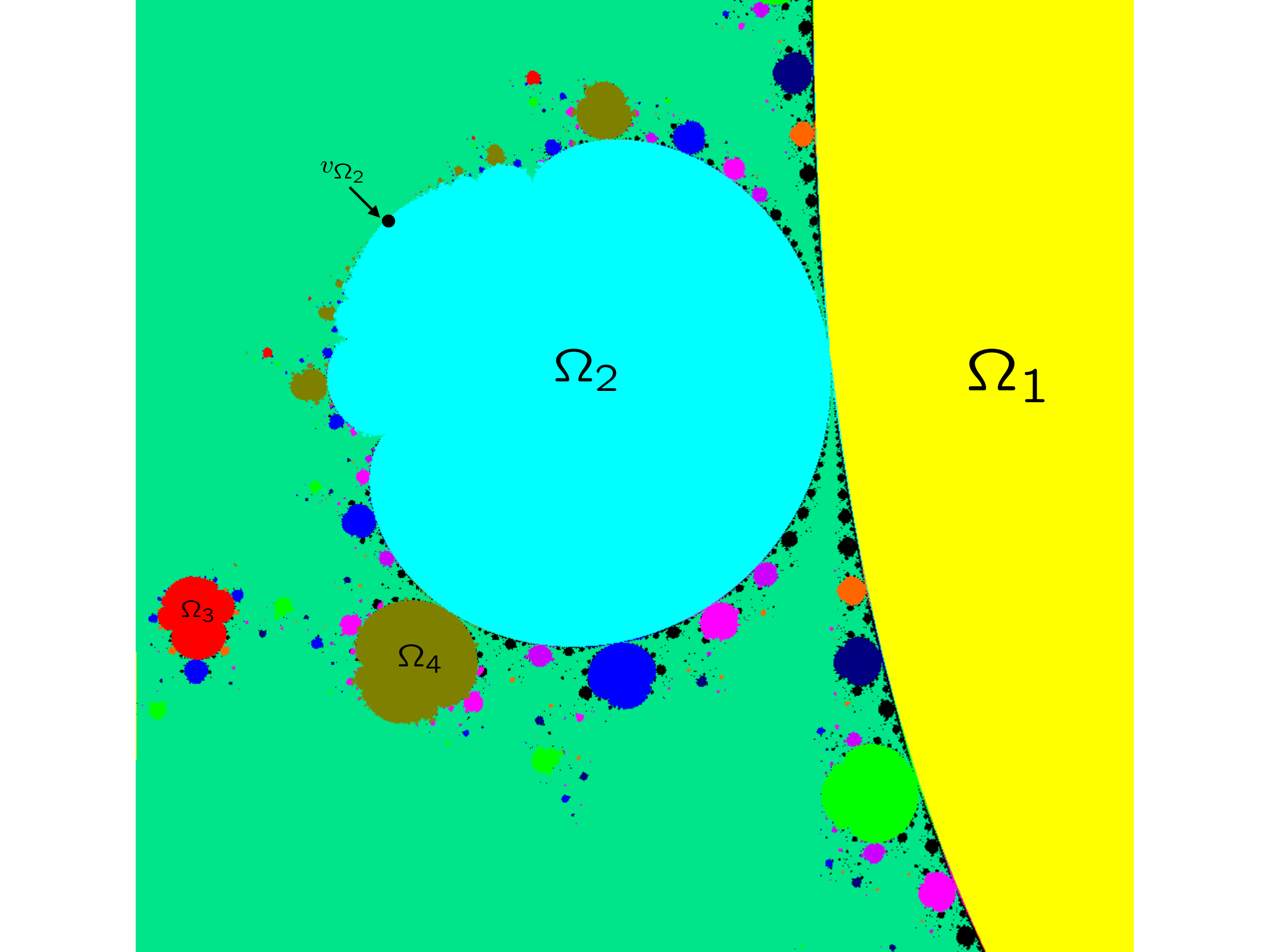}
  \caption{ Blow-up of the $\la$ plane near $\calm_{\la}$ with the periods labelled. }\label{blowup}
\end{figure}

Figure~\ref{lambdaplane} shows a picture of the $\la$ parameter plane for $\rho=2/3. $
   The green region is $\cals$ where both $\la$ and $\mu$ are attracted to the origin.
The unbounded multicolored region on the right in figure~\ref{lambdaplane}  is $\calm_{\la}$ and the  small bounded multicolored region inside the green region is $\calm_{\mu}$.   Since this figure is drawn to scale, in $\calm_{\mu}$ the colors other than yellow are not visible. To make the structure of this region visible and show it is  similar to $\calm_{\la}$'s,  in figure~\ref{calm} we place a blown up  neighborhood of $\calm_{\mu}$    near $\calm_{\la}$.  

     The shell components are colored according to their period:  yellow is period $1$,  cyan is period $2$, red is period $3$ and so on.   Periods higher than $10$ are colored black.  Note that there is only one unbounded domain, the yellow period $1$  domain on the right, $\Omega_1$;  its virtual center is the point at infinity.  The virtual center of the period $1$ component of $\calm_{\mu}$ is the leftmost point.  It is the singular point $\rho/2$ of the parameter space. There is a cusp  boundary point of  $\Omega_1$ on the real axis where the multiplier of the cycle attracted to $\la$ is $+1$. There are cyan period $2$ components appearing as ``buds'' off of the yellow component $\Omega_1$ where  the  multiplier of the cycle attracted to $\la$ is $e^{(2m+1) \pi  i}$; each of these has a virtual center with itinerary $\mathbf k_1=m$.

  In figure~\ref{blowup}, we see a period $2$ component  $\Omega_2$ budding off $\Omega_1$.   Although  $M_{\la}$ and $M_{\mu}$ look disconnected in the figure, as we will prove,  they are not.  Here we have only computed shell components for periods up to $10$.   To make a figure where $\cals$ and $\calm_{\la}$  look connected would require much more computation and many more colors to show components with much higher periods.   What we do see, however, is a period $3$, red component that is NOT a bud component of the period $1$ component.  In fact, there are infinitely many such converging to the virtual center of $\Omega_2$ marked as $v$.  We postpone a full discussion of the finer structure of the shell components to future work.

\section{Boundaries of Hyperbolic Components and Virtual Cycle Parameters}
\label{trans}

  In this section we  show that each virtual center is a boundary point of both $\cals$ and either $\calm_{\la}$  or $\calm_{\mu}$.  To do this we  need to use the concept of transversality.

\medskip
   \begin{definition}[Transversality,~\cite{ CJK19}]~\label{trans}   Suppose $\la^{*}$ is a virtual cycle parameter.  Let  $p^*(\la)$ be the holomorphic prepole function   such that $p^*(\la^{*}) =f_{\lambda^*}^{n-2}(\lambda^*)$.  Define the holomorphic function,
   \[   c_n(\la)=f_{\lambda}^{n-2}(\lambda)- p^*(\la).  \]
   We say $f_{\la}$ is {\em transversal at $\la^{*}$} or satisfies a  {\em transversality condition} at $\la^*$ if $c_{n}'(\la^{*}) \not=0$.
       \end{definition}

\medskip
\begin{thm}[Common Boundary Theorem]\label{vc on bdy}
Every virtual cycle parameter is a boundary point of both a shell component and the shift locus.
Furthermore,  the family $\{f_{\la}\}$ is transversal at these parameters.
\end{thm}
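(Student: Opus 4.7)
The first half, that $\la^*$ is on the boundary of a shell component, is already contained in Theorem~\ref{thm:shell components}(d): virtual cycle parameters coincide with virtual centers, and virtual centers lie on the boundary of their shell components by definition. So two tasks remain: (i) prove the transversality $c_n'(\la^*) \neq 0$, and (ii) show $\la^* \in \partial \cals$. My plan is to establish (i) first and deduce (ii) from it by a local inversion / Rouch\'e-type argument.

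For (i), assume for definiteness $f_{\la^*}^{n-1}(\la^*) = \infty$, so $p^*(\la^*) = f_{\la^*}^{n-2}(\la^*)$ is a simple pole of $f_{\la^*}$, namely $p^*(\la) = p_k(\la)$ for the index $k$ given by Equation~(\ref{poles}). Using Lemma~\ref{invwelldef} and the combinatorial labeling of Section~\ref{combinatorics}, on a neighborhood $V \subset \Sigma$ of $\la^*$ I would express $p^*(\la)$ as the composition $g_{\la,k_{n-1}} \circ \cdots \circ g_{\la,k_2}(\infty)$ of holomorphic inverse branches, and track the iterates $f_\la^j(\la)$, $j \leq n-2$, by holomorphic motion. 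Following the strategy of \cite{CJK19}, I would differentiate $c_n(\la)=f_\la^{n-2}(\la)-p^*(\la)$ by the chain rule, including both the $z$- and $\la$-dependence of $f_\la$ from Equation~(\ref{family}) together with the constraint $1/\la-1/\mu=2/\rho$. The cleanest way to conclude is to work in the chart $w=1/(z-p^*(\la))$ near the terminal pole, converting $c_n=0$ into a simple pole condition, and then to show that the product of $f_\la'$ along the virtual cycle, combined with the non-vanishing residue of $f_{\la^*}$ at $p^*(\la^*)$, forces a non-vanishing first-order variation.

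With (i) granted, (ii) follows along the lines of Corollary~\ref{pre-pole centers}. Since $c_n$ has a simple zero at $\la^*$, it is a biholomorphism from a small disk $U$ about $\la^*$ onto a disk $D_r$ about $0$, so for $\la \in U\setminus\{\la^*\}$ we have $f_\la^{n-2}(\la) = p^*(\la)+c_n(\la)$ in a punctured neighborhood of the pole, and $f_\la^{n-1}(\la)=f_\la(p^*(\la)+c_n(\la))$ sweeps out a punctured neighborhood $W$ of $\infty$ as $\la$ varies in $U$. Because $\infty$ is an essential singularity of $f_\la$, the immediate basin $A_{\la^*}$ of $0$ contains a disk about $0$ whose $f_\la$-preimages accumulate at $\infty$ along the asymptotic tracts of $\la$ and $\mu$, and holomorphic motion makes these preimages move continuously in $\la$. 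Choosing the argument of $c_n(\la)$ so that $f_\la^{n-1}(\la)$ lands in one of these preimages yields $f_\la^n(\la)\in A_\la$, so the orbit of $\la$ converges to $0$. Since $\la^*\in \calm_\la$ (the case $\calm_\mu$ is symmetric), the other asymptotic value $\mu(\la^*)$ is already attracted to $0$ at $\la^*$, and continuity keeps $\mu(\la)$ attracted to $0$ for $\la$ close enough to $\la^*$. Both asymptotic values now lie in $A_\la$, so $\la\in\cals$ and $\la^*\in\partial\cals$.

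The main obstacle is step (i). Unlike the tangent family of \cite{CJK19}, where the symmetry $\mu=-\la$ doubles the effective period and yields a particularly clean derivative formula, here $\la$ and $\mu$ are independent parameters coupled only through Equation~(\ref{eqn - mult}), and $p^*(\la)$ depends on $\la$ nontrivially through the inverse-branch composition. The derivative $c_n'(\la^*)$ is a sum whose dominant term is the multiplier product along the virtual cycle (which blows up at the terminal pole) plus several correction terms coming from $\partial_\la f_\la$ at each iterate; the delicate point is to rule out exact cancellation between these contributions. Passing to the chart $w=1/z$ at infinity and exploiting that the residue of $f_\la$ at $p^*(\la)$ is a non-vanishing holomorphic function of $\la$ should render the non-cancellation transparent, completing the transversality argument.
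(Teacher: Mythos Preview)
Your proposal is essentially correct, but it inverts the logical order of the paper and thereby makes part (ii) harder than it needs to be.

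In the paper, the fact that $\la^*\in\partial\cals$ is proved \emph{directly}, without invoking transversality at all. The argument is a one-line application of the open mapping theorem: on a small neighborhood $U$ of $\la^*$ where $\mu(\la)\in A_\la$, the map $\la\mapsto a_{n-1}(\la)=f_\la^{n-1}(\la)$ is holomorphic with $a_{n-1}(\la^*)=\infty$, so $a_{n-1}(U)$ is an open neighborhood of $\infty$. Since $\infty\in\partial A_\la$ (indeed, a fixed asymptotic tract of $\mu$ lies in $A_\la$ for all $\la$ near $\la^*$), some $\la_U\in U$ has $a_{n-1}(\la_U)\in A_{\la_U}$, hence $\la_U\in\cals$. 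No information about $c_n'(\la^*)$ is needed; open mapping only uses that $a_{n-1}$ is non-constant. Your route---first establishing $c_n'(\la^*)\neq 0$ to get a local biholomorphism, then sweeping into the asymptotic tract---reaches the same conclusion but carries the full weight of the transversality argument as a prerequisite.

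For transversality itself, both you and the paper ultimately defer to the techniques of \cite{CJK19}. The paper's adaptation is different from yours: rather than a direct chain-rule/residue computation, it restricts to a real-analytic path $\la(t)\subset\Omega_n$ along which the multiplier of the attracting cycle has constant argument, and then transports the holomorphic-motion argument from the tangent family verbatim along this path. Your computational approach (tracking $\partial_\la f_\la$ contributions and passing to the chart $w=1/(z-p^*(\la))$) is reasonable in principle, but you correctly flag the cancellation issue as delicate and unresolved; the paper sidesteps this by choosing a one-real-parameter slice on which the \cite{CJK19} machinery already applies.

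Net comparison: your organization (i)$\Rightarrow$(ii) is logically sound but less efficient; the paper's direct proof of (ii) is shorter and decouples the two assertions.
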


\medskip
\begin{remark}~\label{tranrmk} The transversality property translates to the dynamic planes of the functions $f_{\la}$ as follows:\\
 If  $ f_{\la}$ is transversal at $\la^{*}$,  and if  $\la(t)$ is any smooth path passing through $\la^{*}$ at $t^{*}$ such that $\la' (t^{*})\not=0$, then the dynamics of $f_{\la(t)}$ bifurcates at $t^{*}$.   In particular, as $\la(t)$ moves from a shell component into the shift locus through the common boundary point, an asymptotic value, say $\la(t)$,  moves from the attracting basin of an attractive cycle of $f_{\la(t)}$, through the pre-pole $\la^*$ of the virtual cycle, and into the attracting basin of zero for $f_{\la(t)}$. Moreover,  if  $\epsilon$ is small enough so that $\la(t)$ does not contain any other  virtual center when  $|t-t^*|<\epsilon$, then $t^*$ is the only point in the interval  $|t-t^*|<\epsilon$ where the dynamics of $f_{\la(t)}$ bifurcate.  This is illustrated in  figures \ref{param} and~\ref{dynam}.
 
 In addition, transversality of $f_{\la}$   at $\la^{*}$ implies that  the holomorphic functions defining the poles $p_k(\la)$ and pre-poles  $p_{\mathbf k_n}(\la)$  satisfy  $p_k'(\la^*) \neq 0$ and $p_{\mathbf k_n}'(\la^*) \neq 0.$
  \end{remark}

{\it Proof of the Common Boundary Theorem.} Let $\la^*$ be a virtual cycle parameter.  It follows from part (e) of theorem~\ref{thm:shell components},   that   $\la^{*}$ is on the boundary of a shell component.  Suppose this component,  $\Omega_{n}$, is in $\calm_{\la}$ so that $\mu^{*}$ is in $A_{\la^{*}}$, the attracting basin of $0$,   and $f_{\la^{*}}^{n-1} (\la^{*})=\infty$.  We can choose  $U$ be a small neighborhood  of $\la^{*}$ such that $\cap_{\la \in U} A_{\la} $ contains $\mu(\la)$  for all $\la \in U$ and  $a_{n-1}(\la)= f_{\la}^{n-1} (\la)$ is a holomorphic function on  $U$  with  $a_{n-1}(\la^{*})=\infty$.  Since infinity is always a boundary point of the basin $A_{\la}$,     the open mapping theorem  implies that there is a $\la_{U}\in U$ with  $\la_{U}\in A_{\la(U)}$. This says $\la_{U} \in {\mathcal S}$ and thus $\la^*$ is a boundary point of $\cals$.

 In~\cite{CJK19}, we proved a transversality theorem for maps in the tangent family, $\la \tan z$ with $\la = it,$ $t \in \RR$.  There $\la(t)$ is in the imaginary axis, and the proof shows  that the function $c_n(\la(t))$  has no critical point at $t^*$.   It involves the use of holomorphic motions and some ideas adapted from~\cite{LSS}.       That proof can be adapted here by replacing the imaginary axis  with a path $\la(t)$ in $\Omega_n$ defined by the condition that  the multiplier of the attracting cycle  $\mathbf a(\la)$ has argument  equal to  $2\pi i n$, for some $n$.  Then the arguments there can be applied and show that as   $t \to t^*$ in $\Omega_n$, $c_n'(\la(t^*) \neq 0$, and the dynamics bifurcates smoothly. 
We refer the interested reader to that paper for the details. \hspace{4.1in} $\Box$

 An immediate corollary of the Common Boundary Theorem is

 \begin{cor} Given an itinerary, $ \mathbf k_{n-1}=  k_{n-1}k_{n-2}\ldots k_1$, there is exactly one component in each of $\calm_{\la}$ and $\calm_{\mu}$ with that itinerary label.
 \end{cor}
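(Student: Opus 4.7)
I would split the argument into existence and uniqueness, deal with $\calm_{\la}$ first, and transfer the result to $\calm_{\mu}$ using the inversion symmetry of proposition~\ref{calmu and calla}.

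For existence in $\calm_{\la}$, the combinatorial construction of section~\ref{combinatorics} already does the work. Given $\mathbf k_{n-1} = k_{n-1}\cdots k_1$, the composition $g_{\la, k_{n-1}} \circ \cdots \circ g_{\la, k_1}$ of inverse branches (well defined on $\Sigma$ by lemma~\ref{invwelldef}) has a unique fixed-point-at-infinity $\la^*_{\mathbf k_{n-1}} \in \calv_{n-1}$. By part (d) of theorem~\ref{thm:shell components}, $\la^*_{\mathbf k_{n-1}}$ is a virtual center of some shell component $\Omega \subset \calm_{\la}$, and by proposition~\ref{label shells} the label of $\Omega$ is exactly $\mathbf k_{n-1}$.

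For uniqueness in $\calm_{\la}$, suppose two shell components $\Omega, \Omega' \subset \calm_{\la}$ carry the same label. Proposition~\ref{label shells} forces them to share the same virtual center $\la^*$. I would pick a neighborhood $U$ of $\la^*$ containing no other virtual cycle parameter, choose $\la_0 \in U \cap \Omega$ and $\la_1 \in U \cap \Omega'$, and join $\la_0$, $\la^*$, $\la_1$ by a smooth path $\la(t)$ that is transverse to $\la^*$ at the time $t^*$ when it passes through. By the Common Boundary Theorem together with the transversality description in remark~\ref{tranrmk}, the only dynamical bifurcation of $f_{\la(t)}$ inside $U$ occurs at $t^*$, and that bifurcation takes the orbit of the free asymptotic value from the immediate basin of an attracting cycle to the basin of $0$. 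Consequently the two components of $\{\la(t)\} \setminus \{\la^*\}$ lie in different hyperbolic regions, one a shell component and the other the shift locus. Since $\la_0$ and $\la_1$ are both of shell-component type, they must lie on the same side of $\la^*$, and so inside a single shell component; thus $\Omega = \Omega'$. The statement in $\calm_{\mu}$ follows either by repeating the argument with $\la$ and $\mu$ interchanged, or by transporting the $\calm_{\la}$ result through the inversion $I$ of proposition~\ref{calmu and calla}.

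The main obstacle I expect is leveraging the one-dimensional transversality statement (bifurcation only along smooth paths through $\la^*$) to rule out more exotic two-dimensional configurations, such as $\Omega$ and $\Omega'$ being separated by thin slivers of the shift locus meeting at $\la^*$. What makes the path argument above suffice is that $c_n'(\la^*) \neq 0$ promotes $c_n$ to a local biholomorphism at $\la^*$; in this linearizing coordinate the ``shell'' and ``shift'' loci near $\la^*$ are cut out by real-analytic conditions on $c_n(\la)$ and can meet along only one real-analytic arc through the origin, which in turn divides a small punctured neighborhood of $\la^*$ into exactly two connected pieces. Any straight segment through $\la^*$ encounters both pieces, so the path argument captures the whole local picture and the uniqueness conclusion follows.
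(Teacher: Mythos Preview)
Your approach diverges from the paper's in both halves, and the uniqueness half has a gap.

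For existence, the paper does not cite the combinatorial construction; instead it produces a sequence $\la_j \in \cals$ with $|f_{\la_j}^{n-1}(\la_j) - p_{\mathbf k_{n-1}}(\la_j)| \to 0$ and argues that the limit is a virtual cycle parameter with the prescribed itinerary lying on $\partial\cals$. Your route through section~\ref{combinatorics} and theorem~\ref{thm:shell components}(d) is more direct in spirit, but section~\ref{combinatorics} only \emph{defines} $\calv_{n-1}$ as the solution set of $g_{\la,k_{n-1}}\circ\cdots\circ g_{\la,k_1}(\infty)=\la$; it never proves this set is a singleton (or even nonempty) for a given $\mathbf k_{n-1}$, so your assertion of ``a unique fixed-point-at-infinity'' is already assuming what the corollary claims.

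For uniqueness, the step ``Proposition~\ref{label shells} forces them to share the same virtual center $\la^*$'' does not follow. That proposition says each shell component carries a well-defined label, namely the itinerary of its (unique) virtual center; it does \emph{not} say the map from itineraries to virtual cycle parameters is injective. Two components $\Omega,\Omega'$ with the same label therefore have virtual centers with the same itinerary, but a priori those could be distinct solutions $\la^*_1 \neq \la^*_2$ of the transcendental equation $p_{\mathbf k_{n-1}}(\la) = \la$ in $\Sigma$. Your transversality-and-path argument then correctly establishes that a \emph{given} virtual center bounds only one shell component---a fact the paper relegates to a footnote---but it does not rule out two distinct virtual centers sharing an itinerary. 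The paper takes a different line: having produced one $\la^*$ via the shift-locus limit, it invokes transversality (theorem~\ref{vc on bdy}) at $\la^*$ to conclude that no other virtual center with itinerary $\mathbf k_{n-1}$ lies nearby, and from this infers uniqueness of the labeled component.
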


 \begin{proof}  Let  $\mathbf k_{n-1}$ be a given itinerary.     The prepoles $p_{\mathbf k_{n-1}}(\la)$ of order $n-1$ form a discrete set in dynamic space since they are solutions of $f^{n-1}_\lambda(z)=\infty$ and   there is only one with itinerary $\mathbf k_{n-1}$.  They are on the boundary of $A_{\la}$. 
 The virtual centers form a discrete set in parameter space since they are solutions of $f^{n-1}_\lambda(\la)=\infty$.  
 
 We can find a sequence $\la_j \in \cals$, tending to $\partial\cals$ as $j$ goes to infinity, such that  $|f_{\la_j}^{n-1}(\la_j) -p_{\mathbf k_{n-1}}(\la)|$ goes to zero as  $j$ goes to infinity.  It follows that $\lim_{j \to \infty} \la_j$ is a virtual cycle parameter $\la^*$ with
itinerary $\mathbf k_{n-1}$.    By theorem~\ref{trans}, in a small neighborhood of $\la^*$, there is no other virtual center with itinerary $\mathbf k_{n-1}$ so that the component $\Omega_n$ with $\la^*$ as virtual center is the only one in $\calm_{\la}$ with this itinerary. 

We obtain a different component $\Omega_n'$ if we choose a sequence $\la_j'$ such that $f_{\la_j}^{n-1}(\mu_n)$ approaches the prepole with this itinerary, but that is the only other possibility.  In this case, $\Omega_n'$ is in $\calm_{\mu}$.
   \end{proof}

  \begin{figure}
  \centering
  \includegraphics[width=2in]{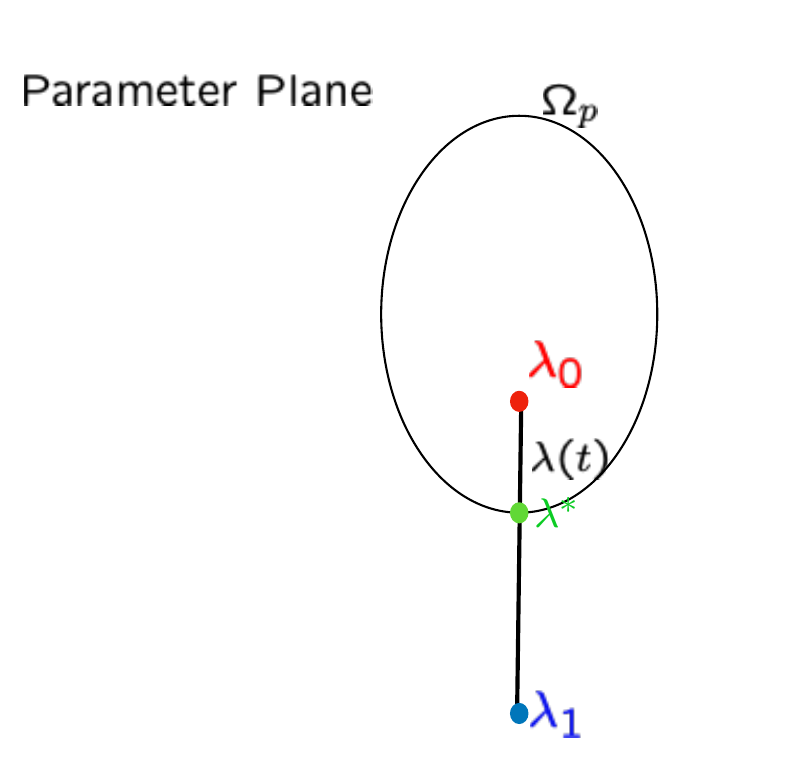}
  \caption{Transversality in the parameter plane} \label{param}
\end{figure}

\begin{figure}
  \centering
  \includegraphics[width=5in]{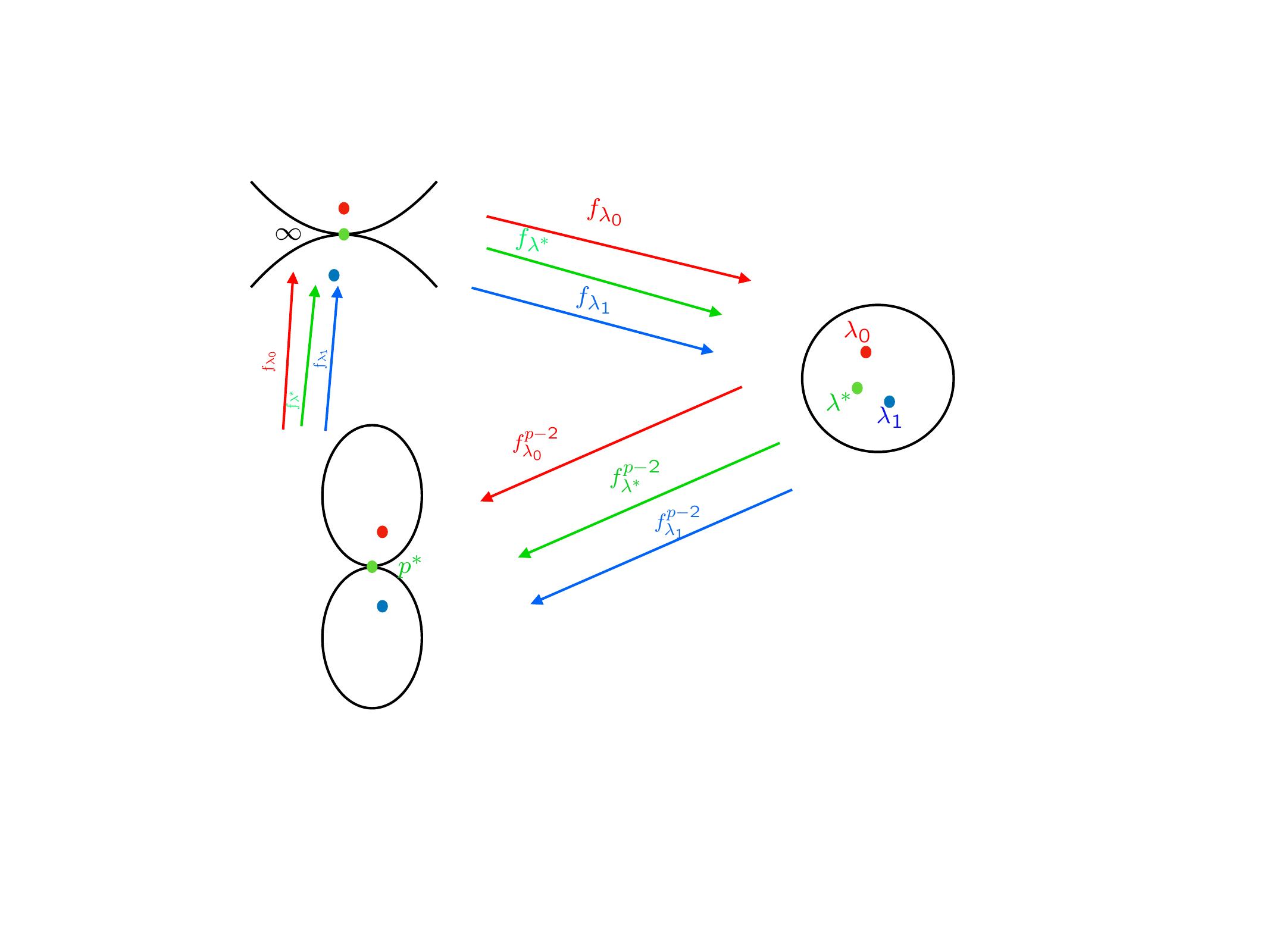}
  \caption{Transversality in the dynamic plane}\label{dynam}
\end{figure}

 \subsection{J-stability and the Bifurcation Locus}\label{biflocus}
Denote the set of virtual cycle parameters by $\mathcal{B}_{cv}$.  By theorem~\ref{thm:shell components}, each such parameter is on the boundary of a unique shell component and in section~\ref{combinatorics} we used these parameters to enumerate the shell components.    Here we will prove that these parameters  are dense in the boundary of the shift locus.  To do so we need two definitions.

  \begin{definition}[Holomorphic family]
A {\em holomorphic family} of meromorphic maps over a complex manifold $X$  is a  map
$\calf:X \times \CC \rightarrow \hat\CC$, such that $\calf(x,z)=:f_x(z)$ is meromorphic for all $x\in X$ and $x \mapsto f_x(z)$ is holomorphic for all $z\in \CC$.
\end{definition}

\begin{definition}
The  \emph{J-stable set} of the family $\mathcal{F}_{2}$, denoted by $\mathcal{J}=\mathcal{J}_{\rho}$,  is the set $\{\lambda  \ |\ f^n_{\lambda}(\lambda)$  and $f^n_{\lambda}(\mu)$ are well defined in a neighborhood about $\lambda$ for all $n$ and form normal families $\}$. Its complement is  called the \em{bifurcation locus}.
\end{definition}

Theorem B of \cite{MSS} in our context states 
\begin{thm}\label{MSS} In any holomorphic family of meromorphic maps with finite singular set,  $\mathcal{J}$ coincides with the set of parameters for which the total number of attracting and superattracting cycles of $f_{\la}$ is constant on a neighborhood of $\la$.
\end{thm} 

 As a corollary (see ~\cite{KK}, Corollary 3.2)
\begin{prop}\label{locconst} If $\la_0 \in \mathcal J$, then  the number of attracting cycles of $f_{\la_0}$ is locally constant in a neighborhood of $\la_0$; in particular, $\mathcal J$ is open.
\end{prop}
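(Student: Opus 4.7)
The plan is to follow the classical Ma\~n\'e--Sad--Sullivan paradigm for parameter stability in holomorphic families, adapted to the meromorphic family $\calf_2$. Fix $\la_0 \in \calj$ and choose a connected open neighborhood $U$ of $\la_0$ on which both families $\{\la\mapsto f^n_\la(\la)\}_{n\geq 0}$ and $\{\la\mapsto f^n_\la(\mu(\la))\}_{n\geq 0}$ are normal, where $\mu(\la)$ is determined by~(\ref{eqn - mult}). Denote by $A(\la)$ the number of attracting periodic cycles of $f_\la$. Since every attracting cycle of a function in $\calf_2$ contains an asymptotic value in its immediate basin (Fatou--Sullivan), $A(\la)\leq 2$. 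I would show $A$ is both lower- and upper-semicontinuous on $U$, hence locally constant.

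\emph{Lower semi-continuity.} Take any attracting cycle of $f_{\la_0}$ of period $n$, and apply the implicit function theorem to the holomorphic equation $f^n_\la(z)-z=0$ at its simple attracting solution $z_0$. Because the derivative $1-\nu(\la_0)\neq 0$, the cycle persists holomorphically as $\mathbf a(\la)$ with holomorphic multiplier $\nu(\la)$; continuity preserves $|\nu(\la)|<1$ on a smaller neighborhood, so $A(\la)\geq A(\la_0)$ locally.

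\emph{Upper semi-continuity.} Suppose for contradiction there is $\la_1\in U$ with $A(\la_1)>A(\la_0)$. Then $f_{\la_1}$ has an attracting cycle $\mathbf a(\la_1)$ of some period $n$ that is not attracting at $\la_0$, and its immediate basin captures one of the asymptotic values, say $\la_1$. By IFT, $\mathbf a(\la)$ extends holomorphically over a neighborhood of $\la_1$ with multiplier $\nu(\la)$ satisfying $|\nu(\la_1)|<1$. Choose a subsequence $k_j$ with $f^{n k_j}_{\la_1}(\la_1)\to a_0(\la_1)$ for some cycle point $a_0(\la_1)$; by normality on $U$, pass to a further subsequence so that $f^{n k_j}_\la(\la)\to \phi(\la)$ locally uniformly on $U$. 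On a neighborhood of $\la_1$ the orbit of $\la$ lies in the basin of the attracting cycle, which forces $\phi(\la)=a_0(\la)$ there; the identity principle then gives $\phi\equiv a_0$ throughout $U$ (after shrinking $U$ so that $a_0(\la)$ stays away from the poles of iterates). Writing $f^{n k_j}_\la(\la)=a_0(\la)+\epsilon_j(\la)$ with $\epsilon_j\to 0$ locally uniformly and linearizing $f^n_\la$ at $a_0(\la)$,
\[
f^{n k_j+\ell n}_\la(\la)-a_0(\la)=\nu(\la)^{\ell}\,\epsilon_j(\la)+O(\epsilon_j(\la)^2),
\]
so $|\nu(\la_0)|\geq 1$ would make $|f^{n k_j+\ell n}_{\la_0}(\la_0)-a_0(\la_0)|$ grow with $\ell$, contradicting uniform convergence of these iterates to the cycle point (unless $\la_0$ is exactly pre-periodic onto $\mathbf a(\la_0)$, a non-generic condition avoidable by a small perturbation inside $\calj\cap U$). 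Hence $|\nu(\la_0)|<1$, so $\mathbf a(\la_0)$ is attracting, contradicting $A(\la_0)<A(\la_1)$.

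Combining the two semicontinuity statements gives $A$ constant on $U$. The openness of $\calj$ then follows essentially from the definition: the same $U$ witnesses normality of both orbit families for every $\la\in U$, so $U\subset\calj$ and $\la_0$ is an interior point. I expect the main technical hurdle to be the global extension $\phi\equiv a_0$ via the identity principle, which requires controlling where the holomorphic cycle function $a_0(\la)$ could meet a pole of an iterate, together with handling the borderline parabolic case $|\nu(\la_0)|=1$; both are managed by the linearization-and-iteration step above together with a suitable shrinking of $U$.
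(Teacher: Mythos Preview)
The paper does not give its own proof of this proposition: it is simply quoted from~\cite{KK} and immediately followed by the statement of theorem~\ref{generalMontel}. There is therefore nothing in the present paper to compare your argument against directly.

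Your outline follows the standard Ma\~n\'e--Sad--Sullivan strategy, and the lower-semicontinuity step and the openness of $\calj$ are fine. The upper-semicontinuity argument, however, has two soft spots. First, the holomorphic cycle function $a_0(\la)$ is produced by the implicit function theorem only near $\la_1$; you then use the identity principle to extend it via the normal limit $\phi$ to all of $U$, but for that you must also check that $f_{\la}^{n}(\phi(\la))=\phi(\la)$ persists across $U$ (this requires knowing $\phi$ avoids the poles of the relevant iterates, which you mention but do not justify) and that the period does not collapse. Second, and more seriously, the linearization step ``$|\nu(\la_0)|\ge 1$ forces growth'' does not handle the parabolic case $|\nu(\la_0)|=1$ with $\nu(\la_0)$ a root of unity: an orbit can genuinely converge to a parabolic cycle along an attracting petal, so no growth occurs and your contradiction evaporates. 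The escape hatch you propose---perturb $\la_0$ inside $\calj\cap U$ to avoid pre-periodicity---does not help here, since the issue is a parabolic (not pre-periodic) limit, and under perturbation the parabolic point typically becomes attracting, which is exactly the scenario you are trying to rule out. To close this gap one usually argues instead that a parabolic parameter cannot lie in $\calj$ (because a parabolic bifurcation forces non-normality of the asymptotic-value orbit family), which is the route taken in the MSS literature and presumably in~\cite{KK}.
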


Consider a component $U$ of ${\mathcal J}$. Suppose $\la$ and $\la'$ are two points in $U$ and $\gamma (t): [0, 1]\to U$ is an analytic curve connecting $\la$ and $\la'$ with $\gamma(0)=\la$ and $\gamma(1)=\la'$. In a neighborhood $V$ of $\gamma$ with  basepoint $\la$,
the set $E_{c}=\{p_{c}\}$, $c\in V$, of all repelling periodic points of $f_{c}$ defines a holomorphic motion,
 $$
h(z, c) =p_{c}: E_{\la} \times V\to \chat.
$$
The $\lambda$-lemma (see \cite{MSS,GJW}) implies that the holomorphic motion $h$ can be extended to the closure $\overline{E}_{\la}$ of $E_{\la}$; that is, there is a holomorphic motion 
$$
H(z,c): \overline{E}_{\la} \times V\to \chat
$$ 
such that $H|E\times V=h$ and $H_{\la, \la'} (z)=H(z, \la'): \overline{E}_{\la} \to \overline{E}_{\la'}$ is a quasiconformal homeomorphism. Since the repelling periodic points of $f_{\la}$ are dense in the Julia set $J_{\la}$, it follows that $\overline{E}_{\la} = J_{\la}$ and 
$$
H_{\la, \la'} \circ f_{\la} =f_{\la'}\circ H_{\la, \la'} \quad \hbox{on $J_{\la}$}. 
$$ 
Note that the construction of $H_{\la, \la'}$ depends on the choice of the curve $\gamma$ and it may not be unique.

 
We also need the following generation of Montel's theorem.

\begin{thm}[See theorem $3.3.6$  in \cite{Bea}]\label{generalMontel}
Let $D$ be a domain, and suppose that the functions $\phi_1$, $\phi_2$ and $\phi_3$ are analytic in $D$, and are such that the closures of the domains $\phi_j(D)$ are
mutually disjoint. If  $\mathcal{F}$ is a family of functions, each analytic in $D$, and such
that for every $z$ in $D$, and every $f$ in $\mathcal{F}$, $f(z)\neq \phi_j(z)$, $j=1, 2, 3$,  then $\mathcal{F}$ is
normal in $D$.
\end{thm}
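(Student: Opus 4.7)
The plan is to deduce this generalized Montel theorem from the classical Picard theorem by way of Zalcman's renormalization lemma. Heuristically, the three omitted analytic functions $\phi_1, \phi_2, \phi_3$ take the place of the three classically omitted values $0, 1, \infty$; disjointness of the closures $\overline{\phi_j(D)}$ guarantees that after any local rescaling one still has three genuinely distinct omitted values in the limit.

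First I would argue by contradiction, supposing $\mathcal{F}$ fails to be normal at some $z_0 \in D$. Zalcman's lemma then produces a subsequence $f_n \in \mathcal{F}$, points $z_n \to z_0$, and scales $\rho_n \to 0^+$ such that the rescaled family
\[
g_n(\zeta) := f_n(z_n + \rho_n \zeta)
\]
converges locally uniformly on $\CC$ to a non-constant meromorphic function $g:\CC \to \hat\CC$. Next, I would exploit the omission hypothesis: for each $j \in \{1,2,3\}$ and each $n$, the difference $g_n(\zeta) - \phi_j(z_n + \rho_n \zeta)$ is non-vanishing on its domain, and since $\phi_j$ is continuous the shifted arguments converge locally uniformly to the constant $\phi_j(z_0)$. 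Because the sets $\overline{\phi_j(D)}$ are pairwise disjoint, the three limit values $\phi_1(z_0), \phi_2(z_0), \phi_3(z_0)$ are distinct points of $\hat\CC$. Hurwitz's theorem then implies that $g$ either equals $\phi_j(z_0)$ identically or never takes that value; since $g$ is non-constant, it omits all three values, and Picard's theorem forces $g$ to be constant --- a contradiction.

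The main obstacle is ensuring that Hurwitz's theorem can be applied uniformly across $j$ at the rescaling limit, and this is precisely where the hypothesis on the \emph{closures} (rather than merely the ranges) of the $\phi_j(D)$ enters: it prevents the three omitted values from colliding in the spherical metric as $z$ varies in $D$, so the limit function truly avoids three distinct points of $\hat\CC$. An alternative strategy would bypass Zalcman by making a pointwise linear-fractional change of variables $T_z$ sending $\bigl(\phi_1(z),\phi_2(z),\phi_3(z)\bigr)$ to $(0,1,\infty)$ and reducing to the classical Montel theorem directly; the disjoint-closures assumption is once again what keeps $T_z$ holomorphic in $z$ and uniformly non-degenerate on compact subsets of $D$, at which point normality of the transformed family transfers back to $\mathcal{F}$.
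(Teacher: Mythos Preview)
The paper does not supply its own proof of this statement; it is quoted as Theorem~3.3.6 of Beardon \cite{Bea} and used as a black box in the proof of Theorem~\ref{bndry}. So there is no in-paper argument to compare against directly.

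Your Zalcman-based argument is correct. One small overstatement: for the Zalcman route you only need the three limit values $\phi_1(z_0),\phi_2(z_0),\phi_3(z_0)$ to be distinct, and pairwise disjointness of the \emph{ranges} $\phi_j(D)$ already guarantees that; the stronger disjoint-\emph{closures} hypothesis is not really the hinge of this particular argument. Where the closures hypothesis genuinely earns its keep is in your alternative strategy --- the pointwise M\"obius reduction $T_z$ sending $(\phi_1(z),\phi_2(z),\phi_3(z))$ to $(0,1,\infty)$ --- since it keeps the cross-ratio uniformly away from $0,1,\infty$ on compacta and hence makes $\{T_z\}$ and $\{T_z^{-1}\}$ equicontinuous, so that normality of $\{T_z\circ f : f\in\mathcal{F}\}$ (classical Montel) transfers back to $\mathcal{F}$. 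That second route is essentially Beardon's proof in \cite{Bea}; your Zalcman argument is a legitimate and somewhat more modern alternative that trades the explicit conjugation for a soft compactness principle plus Picard.
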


The set of virtual center parameters $\mathcal{B}_{cv}$ is
clearly not contained in $\mathcal{J}$.  By theorem~\ref{thm:shell components} and theorem~\ref{vc on bdy} the points in $\mathcal{B}_{cv}$ are on the boundaries of both a shell component and the shift locus.  In addtion, we have

\begin{thm}\label{bndry}
The boundary of $\mathcal{J}$ is contained in the closure of $\mathcal{B}_{cv}$, that is, $\partial \mathcal{J}\subset \overline{\mathcal{B}_{cv}}$.
\end{thm}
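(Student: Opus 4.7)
The plan is to prove the contrapositive by contradiction: take $\la_0 \in \partial \mathcal{J}$, fix an arbitrary neighborhood $U$ of $\la_0$, and show $U \cap \mathcal{B}_{cv} \neq \emptyset$. Assume to the contrary that $U$ contains no virtual cycle parameters. Then, by definition of $\mathcal{B}_{cv}$, for every $\la \in U$ and every $n\geq 1$ we have $f_\la^{n-1}(\la) \neq \infty$ and $f_\la^{n-1}(\mu(\la)) \neq \infty$; in particular neither $f_\la^n(\la)$ nor $f_\la^n(\mu(\la))$ equals any pole $p_k(\la)$ for any $k\in \ZZ$, since such an equality would produce a virtual cycle parameter inside $U$.

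Since $\mathcal{J}$ is open (proposition~\ref{locconst}), the boundary point $\la_0$ does not lie in $\mathcal{J}$, so by the definition of the $J$-stable set at least one of the two families $\mathcal{F}_\la = \{\,\la \mapsto f_\la^n(\la)\,\}_{n\geq 1}$ or $\mathcal{F}_\mu = \{\,\la \mapsto f_\la^n(\mu(\la))\,\}_{n\geq 1}$ fails to be normal on every neighborhood of $\la_0$. Assume without loss of generality that $\mathcal{F}_\la$ is non-normal on $U$; the other case is handled identically, using that $\mu(\la) = \rho\la/(\rho - 2\la)$ is holomorphic on $U$ once $\rho/2 \notin U$.

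Shrink $U$ so that it is simply connected and disjoint from $\{0,\rho/2\}$; then the pole functions from equation~(\ref{poles}),
\[
p_k(\la) \;=\; \tfrac{1}{2}\Log\!\Bigl(\tfrac{\rho - 2\la}{\rho}\Bigr) + ik\pi, \qquad k \in \ZZ,
\]
are single-valued holomorphic functions on $U$. Pick three distinct integers $k_1, k_2, k_3$. Because $p_{k_j}(\la) - p_{k_i}(\la) = i(k_j - k_i)\pi \neq 0$, the three values $p_{k_j}(\la_0)$ are distinct points of $\CC$. After shrinking $U$ once more, we may arrange that the three image sets $p_{k_j}(U)$ are pairwise disjoint together with their closures. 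By our contradiction hypothesis, each $f_\la^n(\la)$ is actually holomorphic on $U$ (no pole in $\la$) and satisfies $f_\la^n(\la) \neq p_{k_j}(\la)$ for $\la\in U$, $n\geq 1$, $j=1,2,3$. The generalized Montel theorem~\ref{generalMontel} then forces $\mathcal{F}_\la$ to be normal on $U$, contradicting $\la_0 \notin \mathcal{J}$. Hence every neighborhood of $\la_0$ meets $\mathcal{B}_{cv}$, so $\la_0 \in \overline{\mathcal{B}_{cv}}$.

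The only nontrivial step is the setup for applying theorem~\ref{generalMontel}: one needs three omitted \emph{holomorphic} functions with mutually disjoint closures of images, and one needs the family $\mathcal{F}_\la$ itself to be holomorphic (rather than merely meromorphic) on $U$. The first is handled by the explicit translation-by-$i\pi$ structure of the $p_k(\la)$ combined with the freedom to shrink $U$; the second is exactly what the absence-of-virtual-centers hypothesis buys us, since a pole of $f_\la^n(\la)$ in the variable $\la$ would, by definition, be a virtual cycle parameter in $U$.
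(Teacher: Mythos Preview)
Your proof is correct and follows essentially the same approach as the paper: assume a neighborhood $U$ of $\la_0\in\partial\mathcal{J}$ contains no virtual cycle parameters, observe that one of the two iterate families must be non-normal at $\la_0$, and then apply the generalized Montel theorem~\ref{generalMontel} with three of the holomorphic pole functions $p_k(\la)$ as the avoided maps to force normality, a contradiction. Your write-up is in fact more careful than the paper's in verifying the hypotheses of theorem~\ref{generalMontel} (selecting three $p_{k_j}$ with pairwise disjoint image closures after shrinking $U$) and in noting that the absence of virtual cycle parameters guarantees the iterate functions $\la\mapsto f_\la^n(\la)$ are genuinely holomorphic on $U$.
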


\begin{proof} Since $0$ is an attracting fixed point, at least one of the families  $\{f_{\lambda}^n(\lambda)\}$ and $\{f_{\lambda}^n(\mu)\}$ converges to $0$; that is, for each $\la_0$, one of them is always normal and, by proposition~\ref{locconst},  in a neighborhood of $\la_0$ it is   the same family that  is normal.   Suppose $\lambda_0\in \partial \mathcal{J}$;  without loss of generality, we may assume that $\{f_{\lambda}^n(\lambda)\}$ is not normal at $\lambda_0$. 

Let $U$ be any neighborhood of $\lambda_0$. The poles of $f_{\lambda,}$
\[  p_k(\lambda)=\frac{1}{2}\log (\frac{\rho-2\lambda}{\rho})+ik\pi, \, k\in \mathbb{Z}, \]
form a holomorphic family in $U$.
If $f^n_{\lambda,\mu}(\lambda) \neq p_k(\lambda)$ for any $k$ or $\la \in U$, then theorem \ref{generalMontel} implies  $f^n_{\lambda}(\lambda)$ is normal in $U$. This contradicts the hypothesis that $\lambda_0\in \partial \mathcal{J}$.
 \end{proof}

The parabolic cusps and Misiurewicz points are contained in the bifurcation locus.

\section{Topological structure  of the Shift Locus}
\label{Shift}

In this section we will show that the shift locus is homeomorphic to an annulus punctured at one point.  This puncture corresponds to the point $\la=0$ where $f_{\la}$ is not defined.

Before we discuss this proof we need a lemma.
 
\begin{lemma}\label{cover}
Suppose $V$ is Riemann surface homeomorphic to a disk  from which a (possibly empty) collection of finitely or countably many pairwise disjoint disks have been removed. Let $\la$ and $\mu$ be two distinct points in $V$. Then there is a  Riemann surface $W,$ homeomorphic to a disk minus a countable collection of pairwise disjoint disks, and an infinite degree holomorphic covering map $h: W \rightarrow V \setminus \{\la, \mu\}$.
\end{lemma}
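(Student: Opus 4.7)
The plan is to realize $W$ as the infinite cyclic cover of $V_0 := V \setminus \{\la, \mu\}$ that ``unwinds'' loops around $\la$.

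First I would use the fact that a genus-zero Riemann surface embeds conformally into $\chat$; after an automorphism of $\chat$ I may arrange $V \subset \DD$ with $\la = 0$. The removed topological disks then become closed subsets $\ov{D_i} \subset \DD \setminus \{0\}$, and $V_0 \subset \DD \setminus \{0,\mu\} \subset \CC^*$.

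Next I would pull back along the universal covering $\exp \colon \CC \to \CC^*$. Setting
\[
W := \exp\inv(V_0) = \{\, z \in \CC : e^z \in V \setminus \{\mu\}\,\}, \qquad h := \exp|_W,
\]
the restriction $h \colon W \to V_0$ is automatically a holomorphic covering (restriction of a covering map over an open subset of $\CC^*$); its fibres are the cosets of $2\pi i\,\ZZ$, so $h$ has infinite degree.

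The substantive step is the topological type of $W$. Since $|e^z|<1$ iff $\Re z<0$, one has
\[
W = \{\Re z < 0\} \;\setminus\; \bigcup_i \exp\inv(\ov{D_i}) \;\setminus\; \exp\inv(\mu).
\]
Each $\exp\inv(\ov{D_i})$ is a disjoint union of closed topological disks indexed by $2\pi i\,\ZZ$ (because $\ov{D_i}$ is simply connected and avoids $0$), and $\exp\inv(\mu) = \{\Log \mu + 2\pi i n : n \in \ZZ\}$ is a discrete set of points. The open left half plane is homeomorphic to an open disk, and topologically removing an isolated point from an open surface is equivalent to removing a closed topological disk, since both create an end whose deleted neighborhoods are annuli. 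Hence $W$ is homeomorphic to an open disk with countably many pairwise disjoint closed disks removed.

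The main obstacle I expect is confirming local finiteness of the removed pieces inside the half plane, so that $W$ genuinely has the stated topological type rather than a planar surface with ends accumulating on one another. This reduces to two observations: local finiteness of the $\ov{D_i}$ in $V$ is inherited from the hypothesis that the $\ov{D_i}$ are pairwise disjoint with closures in $V$, which lifts to local finiteness of $\bigcup_i \exp\inv(\ov{D_i})$ in $\{\Re z < 0\}$; and the points $\Log \mu + 2\pi i n$ escape to infinity as $|n|\to\infty$, so they accumulate only at the end of the half plane.
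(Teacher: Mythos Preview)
Your proof is correct and uses essentially the same construction as the paper: embed $V$ conformally in $\hat\CC$ and pull back along $\exp:\CC\to\CC^*$. The only difference is normalization---the paper sends $\la\mapsto 0$ and $\mu\mapsto\infty$, so that both punctures are absorbed by $\exp$ and $W=\exp^{-1}(e(V))\subset\CC$ directly has only disk-holes, avoiding your extra step of arguing that deleting the isolated preimages of $\mu$ is topologically the same as deleting small disks.
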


\begin{proof}
There exists an embedding $e: V\to \widehat{\mathbb{C}}$ such that $e(\lambda)=0$ and $e(\mu)=\infty$.    Consider the exponential map $$Exp(z)=e^z: \mathbb{C}\to \mathbb{C}$$ and set $W=Exp^{-1}(e(V))$. Each component $U$ of $\widehat{\mathbb{C}}\setminus e(V)$ is simply connected and does not contain either $0$ or $\infty$.  Therefore $Exp^{-1}(U)$ is the union of infinitely many simply connected open sets so that $W$ is an open set  with infinitely many holes. Thus  $h= e^{-1}\circ Exp: W\to V$ is  the required map.
\end{proof}

\begin{remark}\label{morecover} We  inductively apply this lemma to  construct a family of  surfaces  and infinite degree covering maps.   The direct limit of this process defines a map that is used in a key step of the proof of the main structure theorem.  

   As in  lemma~\ref{cover}, let $V_0$ be  a topological disk and let $\{U_j \}$ be a (possibly empty) collection of finitely or countably many pairwise disjoint disks in $V_0$.  Set  $U_{0}=V_{0}\setminus \cup_{j\in {\mathbb Z}} U_{j}$ and fix two points, $\la_0$ and $\mu_0$  in $U_0$.  Applying the lemma, we can find a Riemann surface $U_1=V_{1}\setminus \cup_{(j_{1}, j)\in {\mathbb Z}^{2}} U_{j_{1}j}$,  where $V_{1}$ is a topological disk and the $U_{j_{1},j}$ are pairwise disjoint topological disks in $V_{1}$, and an infinite degree holomorphic covering map $h_1: U_1 \rightarrow U_0 \setminus \{\la_0, \mu_0\}$.

    Iterating this process, we choose points $ \la_{n-1}, \mu_{n-1} \in U_{n-1} $ and  obtain Riemann surfaces $U_{n}=V_{n}\setminus \cup_{(j_{n},\cdots  j_{0})\in {\mathbb Z}^{n+1}} U_{j_{n}\cdots j_{0}}$,  where $V_{n}$ is a topological disk and the $U_{j_{n}\cdots j_{0}}$ are pairwise disjoint  topological disks in $V_{n}$,   and  holomorphic covering maps of infinite degree
\[h_{n}: U_{n} \to U_{n-1} \setminus \{ \la_{n-1}, \mu_{n-1} \}.  \]
 \end{remark}

To carry out the proof on the structure of $\cals$, recall the normalized  uniformizing map $\phi_{\la}$ defined in the proof of proposition~\ref{compinv} that conjugates $f_{\la}$ to a linear map near the origin.  We divide the discussion 
  into two parts depending on which of the asymptotic values is on the boundary of  $O_{\la}$,  the domain on which  $\phi_{\la}$ is injective:
    \begin{itemize}
\item Let $\cals_{\la} =\{ \la  \in \cals |  \mu \in \partial{O_{\la}} \}$.
\item Let  $\cals_{\mu} =\{ \la  \in \cals | \la \in  \partial{O_{\la}}  \}$.
\end{itemize}
These sets have a common boundary,
  $\cals_* = \cals_{\la} \cap \cals_{\mu}$, or equivalently,
  \[  \cals_*=\{ \la  \in \cals_\la | \la \in \partial{O_{\la}} \} = \{ \la \in \cals_{\mu} | \mu \in \partial{O_{\la}} \}.\]
 
In section~\ref{fatoucomps} we defined the map $I(\la)$ which is some kind of the inversion in the circle ${\mathcal C}_0$ defined by $|z-\rho/2|=|\rho/2|$.   Using this map we have,

\begin{prop}\label{symsstar} The common boundary set $\cals_*$ is invariant under $I(\la)$. 
  \end{prop}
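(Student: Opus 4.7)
The plan is to show $I(\cals_*)=\cals_*$ by lifting the already-established symmetry $I(\cals)=\cals$ to the uniformizing domain $O_\la$.

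\textbf{Step 1 (the conjugation).} First I would make precise the relationship between $f_\la$ and $f_{I(\la)}$ using the identity established just before the definition of $\hat{\mathbb F}_2$: from $f_{\la',\mu'}(z)=-f_{\la,\mu}(-z)$ with $\la'=-\mu$, $\mu'=-\la$, and the fact that $I(\la)=-\mu$, $I(\mu)=-\la$, one obtains
\[
f_{I(\la)}(z)=\sigma^{-1}\circ f_\la\circ\sigma(z),\qquad \sigma(z)=-z.
\]
In particular, $\sigma$ conjugates the dynamics of $f_\la$ to the dynamics of $f_{I(\la)}$, both of which fix $0$ with the same multiplier $\rho$.

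\textbf{Step 2 (the uniformizer transforms as expected).} Using the uniformizer $\phi_\la$ of proposition \ref{compinv}, which is characterized by $\phi_\la(0)=0$, $\phi_\la'(0)=1$, and $\phi_\la\circ f_\la=\rho\,\phi_\la$, I would verify directly that $\tilde\phi(z):=-\phi_\la(-z)$ satisfies the same three normalizing conditions for $f_{I(\la)}$. By uniqueness, $\phi_{I(\la)}(z)=-\phi_\la(-z)$. Consequently the largest neighborhood on which $\phi_{I(\la)}$ is injective is $O_{I(\la)}=-O_\la$, hence $\partial O_{I(\la)}=-\partial O_\la$.

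\textbf{Step 3 (translating the membership condition).} Recall $\la\in\cals_*$ means $\la\in\cals$ and both asymptotic values of $f_\la$ lie on $\partial O_\la$, i.e.\ $\la\in\partial O_\la$ and $\mu\in\partial O_\la$. Since the asymptotic values of $f_{I(\la)}$ are precisely $I(\la)=-\mu$ (preferred) and $I(\mu)=-\la$ (non-preferred), Step 2 gives
\[
I(\la)\in\partial O_{I(\la)} \iff -\mu\in-\partial O_\la \iff \mu\in\partial O_\la,
\]
\[
I(\mu)\in\partial O_{I(\la)} \iff -\la\in-\partial O_\la \iff \la\in\partial O_\la.
\]
Combining these two equivalences with the already-noted fact (stated right after proposition \ref{calmu and calla}) that $I(\cals)=\cals$, we conclude $\la\in\cals_*\iff I(\la)\in\cals_*$, which is the desired invariance.

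\textbf{Anticipated obstacle.} The only subtle point is Step 2: one has to keep careful track of how the marking of asymptotic values interacts with the involution $\sigma$, because $I$ not only inverts across the circle $C_0$ but also swaps the roles of $\la$ and $\mu$ (up to sign). Once that bookkeeping is done correctly and $\phi_{I(\la)}(z)=-\phi_\la(-z)$ is verified via the uniqueness of the normalized linearizer, the rest of the argument is a purely symbolic substitution. No deeper dynamical input is required beyond what has already been set up in section \ref{fatoucomps}.
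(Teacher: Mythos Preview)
Your proposal is correct and follows essentially the same route as the paper: establish the conjugacy $f_{I(\la)}=\sigma^{-1}\circ f_\la\circ\sigma$ with $\sigma(z)=-z$, transport the linearizer to get $O_{I(\la)}=-O_\la$, and then read off that $I$ swaps $\cals_\la$ with $\cals_\mu$ and hence fixes their intersection $\cals_*$. The only cosmetic difference is that you track the normalization more carefully and write $\phi_{I(\la)}(z)=-\phi_\la(-z)$, whereas the paper writes $\phi_\la(-z)$; your version is the one that preserves the condition $\phi'(0)=1$, but since only the domain of injectivity matters for $O_\la$, the two formulations lead to the same conclusion.
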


\begin{proof}  Note that  the affine map  $z\to -z$ conjugates $f_\lambda$  to $f_{I(\lambda)}$. Therefore if $\phi_\lambda(z)$ is the uniformizing map for $f_\lambda,$ then the uniformizing map $\phi_{I(\lambda)}$ for  $f_{I(\lambda)}=f_{\lambda_1,\mu_1}$ is $\phi_{\la}(-z)$. Thus, $$\phi_{I(\lambda)}(\lambda_1)=\phi_{\la}(\mu)\text{ and }\phi_{I(\lambda)}(\mu_1)=\phi_{\la}(\lambda).$$ 

It follows that  $I(\lambda)$ interchanges $\mathcal{S}_\lambda$ and $\mathcal{S}_\mu$ and fixes $\mathcal{S}_*$.   
\end{proof}

Note that the point $\la = \rho$ is in $\cals_*$.     

We saw above, in proposition~\ref{invpts}, that if $\rho$ is real, the  invariant circle  $\calc_0$ of the inversion $I(\la)$ is in $\cals$.   If $\rho$ is real, we can say more.

\begin{prop}\label{cals circle} If $\rho$ is real, then $\cals_*=\calc_0$. 
\end{prop}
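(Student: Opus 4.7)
\emph{Plan.} The claim splits into the two inclusions $C_0\subseteq \cals_*$ and $\cals_*\subseteq C_0$. For the first, I would exhibit an antiholomorphic dynamical symmetry. Define $\sigma(z)=-\bar z$ on the $z$-plane. Using the identities $f_{\la,\mu,\rho}(-z)=f_{\mu,\la,-\rho}(z)$ and $\ov{f_{\la,\mu,\rho}(\bar z)}=f_{\bar\la,\bar\mu,\bar\rho}(z)$ together with the marking equivalence $(\la,\mu)\sim(-\mu,-\la)$, a short computation shows that $\sigma\circ f_\la=f_\la\circ\sigma$ precisely when $\rho$ is real and $\mu=-\bar\la$, that is, precisely when $\la\in C_0$. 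Since $\sigma$ fixes $0$, it preserves both the attracting basin $A_\la$ and the linearization domain $O_\la$, and it exchanges $\la\leftrightarrow\mu$. The proof of Proposition~\ref{compinv} guarantees that at least one of $\la,\mu$ lies on $\partial O_\la$; the $\sigma$-symmetry then forces \emph{both} to lie there, so $\la\in\cals_*$.

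For the reverse inclusion $\cals_*\subseteq C_0$, I would combine the dynamical symmetry above with the parameter-space inversion $I$ introduced in Section~\ref{fatoucomps}. When $\rho$ is real, the composition $\tau(\la):=I(\bar\la)$ is an antiholomorphic involution of the $\la$-plane whose fixed-point set is exactly $C_0$: the equation $\tau(\la)=\la$ reduces to $\bar\la(2\la/\rho-1)=\la$, i.e.\ to $|\la|^2=\rho\,\Re\la$, i.e.\ to $|\la-\rho/2|=|\rho/2|$. By Proposition~\ref{calmu and calla}, $I$ interchanges $\cals_\la$ with $\cals_\mu$, and the identity $\phi_{\bar\la}(z)=\ov{\phi_\la(\bar z)}$ (valid for real $\rho$ by uniqueness of the normalized linearizer) shows that the moduli $|\phi_\la(\la)|$ and $|\phi_\la(\mu)|$ are invariant under $\la\mapsto\bar\la$, so complex conjugation preserves each of $\cals_\la,\cals_\mu$. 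Hence $\tau$ swaps $\cals_\la$ and $\cals_\mu$. Topologically, $C_0\setminus\{0\}$ is a simple essential arc in $\cals$ (its endpoint $0$ is a puncture of the parameter space) splitting $\cals$ into two complementary regions that $\tau$ exchanges; since the disjoint open subsets $\cals_\la,\cals_\mu$ avoid $C_0$ and are $\tau$-swapped, each lies entirely on one side of $C_0$, so their common boundary inside $\cals$ is all of $C_0$, yielding $\cals_*=C_0$.

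The delicate step is the final topological one: ruling out isolated components of $\cals_*$ lying off $C_0$, i.e.\ verifying that $C_0\setminus\{0\}$ really separates $\cals_\la$ from $\cals_\mu$ globally. I expect this to follow from the punctured-annulus structure of $\cals$ established in Theorem~\ref{main1}, which forces any $\tau$-invariant essential simple loop in $\cals$ to be isotopic to $C_0$ and hence to coincide with the dividing curve. As a fallback, I would first check local transversality of the real-analytic equation $|\phi_\la(\la)|=|\phi_\la(\mu)|$ at each point of $C_0$ (using the explicit identity $\phi_\la(\mu)=-\ov{\phi_\la(\la)}$ obtained from the $\sigma$-symmetry) to confirm that $\cals_*$ is a regular one-manifold along $C_0$ and therefore equal to $C_0$ in a neighborhood of it; the global equality then follows from the connectedness and $\tau$-equivariance of $\cals\setminus C_0$.
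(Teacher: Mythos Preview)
Your argument for $C_0\subseteq\cals_*$ via the antiholomorphic symmetry $\sigma(z)=-\bar z$ is exactly the paper's approach: when $\rho$ is real and $\la\in C_0$ one has $\mu=-\bar\la$, the conjugacy $\sigma\circ f_\la=f_\la\circ\sigma$ holds, and the induced identity $\phi_\la(\mu)=-\overline{\phi_\la(\la)}$ forces $|\phi_\la(\la)|=|\phi_\la(\mu)|$, hence $\la\in\cals_*$.

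For the reverse inclusion $\cals_*\subseteq C_0$ you go well beyond what the paper actually writes: the paper's proof verifies only that $|\phi_\la(\la)|=|\phi_\la(\mu)|$ for $\la\in C_0$ and stops there, so strictly speaking it establishes only one containment. Your strategy---introducing the antiholomorphic parameter involution $\tau(\la)=I(\bar\la)$ with fixed set $C_0$, checking that $\tau$ swaps $\cals_\la$ and $\cals_\mu$, and then invoking the punctured-annulus structure of $\cals$ to conclude that the common boundary must coincide with the $\tau$-fixed curve---is a genuinely different and more complete argument. Two cautions: first, it leans on Theorem~\ref{thm:shift locus}, proved later; this is not circular (Proposition~\ref{cals circle} is used there only to interpret a figure, not as a logical input), but you should flag the forward reference. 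Second, the separation step needs a bit more than ``$\tau$ swaps the two sides'': you must check that $C_0\setminus\{0\}$ is an \emph{essential} simple closed curve in the punctured annulus $\cals$ (it encircles the puncture $0$ but not the boundary component $\partial\calm_\mu$, since $\rho/2$ is the center, not a point, of $C_0$), so that its complement has exactly two components; only then does the $\tau$-swap force $\cals_\la^0$ and $\cals_\mu^0$ into opposite components. Your fallback local-transversality idea would also work and avoids the forward reference, but carrying it out requires computing the normal derivative of $|\phi_\la(\la)|^2-|\phi_\la(\mu)|^2$ along $C_0$, which is more than ``checking'' the identity $\phi_\la(\mu)=-\overline{\phi_\la(\la)}$.
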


\begin{proof}  Let $\sigma(z)=-\overline{z}$.   Then if $\rho$ is real, it is easy to check that for any $z$, $f_\lambda\circ \sigma(z)=\sigma\circ f_\lambda(z)$.   Therefore $$f^n_\lambda(\mu)=f^n_\lambda(-\overline{\lambda})=-\overline{f_\lambda^n(\lambda)}$$  and by proposition~\ref{invpts}, they both converge to $0$.

To show $\cals_*=\calc_0$, we need to show that $|\phi_\lambda(\lambda)|=|\phi_\lambda(\mu)|$ where $\phi_{\la}$ is the uniformizing map defined above such that  $\phi_\lambda(f_\lambda(z))=\rho\phi_\lambda(z)$.  We claim, in fact, that $\phi_\lambda(\mu)=-\overline{\phi_\lambda(\lambda)}$.  

Let $\phi=\sigma \circ\phi_\lambda\circ \sigma (z)$. We claim that $\phi_\lambda=\phi$ since $$\phi(f_\lambda(z))=\sigma \circ \phi_\lambda \circ \sigma (f_\lambda(z))=\sigma \circ \phi_\lambda (f_\lambda(\sigma(z)))=\sigma(\rho \phi_\lambda(\sigma(z)))=\rho \sigma\circ \phi_\lambda\circ \sigma(z)=\rho \phi(z).$$
Then  $\phi_\lambda(\mu)=\phi_\lambda(\sigma(z))=\sigma \phi_\lambda(\lambda))=-\phi_\lambda(-\overline{\lambda})$ as claimed.
\end{proof}

  The following theorem says that the interior $\cals_{\la}^0$ of $\cals_{\la}$ is a topological annulus. It follows that it is connected. 

\begin{thm}\label{calsla} There is a homeomorphism $E: \calslao \rightarrow \AA$ where $\AA$ is a topological annulus.  The inverse map $E^{-1}$
  extends continuously to all points   except one of one of the boundary components of $\AA$.  \end{thm}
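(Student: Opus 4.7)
The plan is to adapt the B\"ottcher--Douady--Hubbard uniformization of the Mandelbrot complement to our setting. For each $\la\in\calslao$ let $\phi_\la$ be the K\oe nigs linearization of $f_\la$ at $0$, normalized by $\phi_\la(0)=0$, $\phi_\la'(0)=1$, and $\phi_\la\circ f_\la=\rho\,\phi_\la$. It is univalent on $O_\la$, and since $\la\in\cals_\la$, the asymptotic value $\mu(\la)=\rho\la/(\rho-2\la)$ lies on $\partial O_\la$, so $\phi_\la(\mu)$ is a well-defined nonzero boundary value. I would define
\[ E(\la)\;=\;\frac{\phi_\la(\la)}{\phi_\la(\mu(\la))}, \]
which is holomorphic in $\la$ by linearization-with-parameters. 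For $\la$ in the interior $\calslao$ one has $\la\in O_\la$, so $|E(\la)|<1$; while, as $\la\to\cals_*$, the preferred value $\la$ itself reaches $\partial O_\la$ and $|E(\la)|\to 1$.

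To prove injectivity I would use a quasiconformal surgery. If $E(\la)=E(\la')$, the scaled conformal isomorphism $\chi=\phi_{\la'}^{-1}\circ(c\cdot\phi_\la):O_\la\to O_{\la'}$, with $c=\phi_{\la'}(\mu')/\phi_\la(\mu)$, sends $(0,\la,\mu)$ to $(0,\la',\mu')$ and conjugates $f_\la|_{O_\la}$ to $f_{\la'}|_{O_{\la'}}$. Using the tower of infinite-degree covers from Lemma~\ref{cover} and Remark~\ref{morecover}---each layer obtained by removing the two asymptotic values and passing to the exponential cover of the resulting twice-punctured disk---I would lift $\chi$ equivariantly through the inverse branches of $f_\la$ to a $\rho$-equivariant homeomorphism of the basins $A_\la\to A_{\la'}$. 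Extending by a standard identification on the Julia sets produces a global quasiconformal conjugacy whose Beltrami differential vanishes (it is conformal on each Fatou component), and rigidity of the family $\calf_2$ forces $\la=\la'$.

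For surjectivity and identification of the image as a topological annulus, I would use an open-closed degree argument. The map $E:\calslao\to\DD^*$ is open and holomorphic. The estimate $|E|\to 1$ uniformly as $\la\to\cals_*$ yields the outer boundary circle $|w|=1$; along the ``wild'' part of $\partial\calslao$ (its common boundary with $\calm_\la\cup\calm_\mu$), the transversality of $\{f_\la\}$ at the dense set of virtual cycle parameters (Theorem~\ref{vc on bdy}) keeps $|E|$ bounded above away from $1$, so that the remaining end of $\calslao$ is captured by an inner circle $|w|=r_0$. A monodromy/degree count then identifies $E(\calslao)$ with the annulus $\AA=\{r_0<|w|<1\}$. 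The continuous dependence of $\phi_\la$ and $\mu(\la)$ on $\la$ as $\la\to\cals_*$ lets $E^{-1}$ extend continuously to the outer circle, while transversality at the dense set of virtual cycle parameters on the wild boundary prevents any continuous extension of $E^{-1}$ across $|w|=r_0$.

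The main obstacle is carrying out the quasiconformal surgery rigorously across the infinite-degree branching at the asymptotic tracts of $\la$ and $\mu$, which is exactly what the cover machinery in Remark~\ref{morecover} was designed to handle; and, hand in hand with this, making the degree argument precise enough to rule out slits or extra boundary arcs in the image $E(\calslao)$. For the latter I anticipate invoking the transversality part of the Common Boundary Theorem to ensure that $E$ is a genuine bijection onto a round annulus rather than a more complicated planar domain.
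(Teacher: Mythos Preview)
Your definition of $E$ is essentially the paper's, up to a conformal change of target: the paper normalizes $\phi_\la$ so that $\phi_\la(\mu)=\phi_0(\la_0)$ and sets $E(\la)=\phi_0^{-1}(\phi_\la(\la))$, landing in the \emph{dynamical} model $K_0\setminus\Delta$ (the basin of a fixed map $Q=f_{\la_0}$ with a dynamically defined disk removed). Your ratio $\phi_\la(\la)/\phi_\la(\mu)$ is just $\phi_0\circ E$ rescaled. One small slip: in $\cals_\la$ the value $\mu$, not $\la$, is the obstruction on $\partial O_\la$, so $\la\notin O_\la$ and $|\phi_\la(\la)|>|\phi_\la(\mu)|$; your modulus inequality and the placement of the two boundary circles are reversed. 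Your injectivity sketch (extend the K\oe nigs conjugacy to a global conformal conjugacy and invoke rigidity) is the same idea as the paper's.

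The genuine gap is surjectivity. The paper does \emph{not} use an open--closed or degree argument; it constructs the inverse explicitly. Given a target point $p\in K_0\setminus\Delta$, it builds a direct system of infinite covers $(U_n,Q_n)$ over $U\setminus\{\la_0,p\}$ using the machinery of Lemma~\ref{cover} and Remark~\ref{morecover}, passes to the direct limit $(U_\infty,Q_\infty)$, and then uses Teichm\"uller theory of the twice-punctured torus $T^2$ together with the Measurable Riemann Mapping Theorem to produce a conformal embedding $e:U_\infty\to\CC$ with $e\circ Q_\infty=f_{\la(p)}\circ e$ for a unique $\la(p)\in\cals_\la^0$. This is the heart of the proof and occupies most of the section; it simultaneously yields surjectivity, the annulus structure of $\cals_\la^0$, and the continuous extension of $E^{-1}$ to $\partial\Delta\setminus\{\la_0\}$.

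Your proposed degree argument does not close this gap. You have no a priori connectedness of $\cals_\la^0$, so ``degree'' is not even defined globally. You have not shown $E$ is proper: on the ``wild'' boundary $\partial\cals_\la\cap\partial\calm_\la$, the correct behavior is that $\la$ escapes $A_\la$ and $|\phi_\la(\la)|\to\infty$, not that $|E|$ sits on a fixed inner circle $|w|=r_0$; nothing in your outline rules out the image being a proper sub-annulus, a slit domain, or several components. Finally, the Common Boundary/transversality theorem concerns the bifurcation of the dynamics as one crosses a virtual cycle parameter; it says nothing about bounds on $|E|$ and cannot substitute for the explicit inverse construction.
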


\begin{remark} The proof of this theorem is based on a lemma in which we explicitly construct a  homeomorphism  $E$ from $\calslao$ to an annulus.  The construction depends on the choice of a  particular ``model map'' in the period $1$ component $\Omega_1$. 

The proof of the lemma is based on a technique that originally  appeared in the unpublished thesis of Wittner, \cite{Wit}.
The  technique, called ``critical point surgery'',  is used to model pieces of the cubic connected locus on the dynamical plane of a quadratic polynomial.  In \cite{GK},  it was adapted to describe  slices of $Rat_2$, the parameter space of rational maps of degree two with an attracting fixed point.
   Like the $Rat_2$ case, we have two singular values, but unlike that case, our singular values are asymptotic values and our maps are infinite degree and have an essential singularity.   We choose as our model  an   $f_{\la}$ with $\la$ in the period $1$ shell component of  $\calm_{\la}$.  This is the unbounded yellow component in figure~\ref{lambdaplane} and is  denoted  by $\Omega_1$.   As we saw at the end of section~\ref{fatoucomps}, the attracting basin of the fixed point  of $f_{\la}$ is simply connected and completely invariant.   Our model space will be the annulus formed by removing  a dynamically defined disk from this basin.
\end{remark}

 Before we give the proof in detail, we give an outline.    Below, we assume, as we have been doing, that  $\rho$ is fixed and all the functions $f_{\la}$ belong to $\calf_{2}$.

     \begin{enumerate}\label{sketch}
    \item Since the multiplier map is a universal cover of  a shell component to the punctured unit disk, we can find  a $\la_0$ in   $\Omega_1 \subset \calm_{\la}$ such that the multiplier at the fixed point $q_0$  of $f_{\la_0}$ equals  the fixed value $\rho$. This choice is convenient because the map $f_{\la_0}$ is quasiconformally conjugate to a map $\sigma \tan z$ whose  Julia set, by \cite{KK},  is a quasiconformal image of the real line.   In fact, if we take $\rho$ real, $\sigma$ is real, the Julia set of $f_{\la_0}$ is a line parallel to the imaginary axis and the  attracting basin of $q_0$ is a simply connected,  completely invariant  half plane  containing the asymptotic value $\la_0$.  Following the notation in section~\ref{fatoucomps} we denote the basin of $q_0$ by  $K_0$.

 \item     We make the model space by  removing from  $K_0$ a closed dynamically defined topological disk $\Delta$  which contains the fixed point $q_0$ in its interior and $\la_0$ on its boundary. We define the map $E$ from $\cals_{\la}^0$ to  $K_0 \setminus \Delta$   as  follows: to each $\lambda\in \cals_{\la}^0$, we construct a map $\xi_{\la}$ from a subset of the attracting basin $A_\lambda$  of $0$ containing both asymptotic values into the attracting basin $K_0$ of $q_0$ such that $\xi_{\la}(0)=q_0$ and $\xi_{\la}(\mu)=\la_0$;  we set $E(\lambda)=\xi_{\la}(\la)$.  We then  prove that $E$ is injective.

\item   To show $E$ is a homeomorphism, we construct an inverse.  
\begin{itemize}
\item We want to assign a map $f_{\la} \in \cals_{\la}^0$ to each point  $p$ in  $K_0 \setminus \Delta$.  The point $p$ should correspond to the   asymptotic value $\la$ of $f_{\la}$.    Given $p$, we use induction to construct the stable region of a map with two asymptotic values at $\la_0$ and $p$.  At the $n^{th}$ step we obtain a
   domain $U_n$,  homeomorphic to a disk minus an infinite collection of open disks, and a  holomorphic map $Q_n:U_n \rightarrow U_n$ with omitted values $\la_0$ and $p$.  Taking the direct limit  of the pairs   $(U_n, Q_n)$ we obtain a pair $(U_{\infty}, Q_{\infty})$ where $Q_{\infty}:U_{\infty} \rightarrow U_{\infty}$   is a holomorphic covering map with the desired topology; that is, an infinite degree covering map with two asymptotic values.  
       \item We construct a conformal embedding $e:U_{\infty} \rightarrow \CC$ such that $e \circ U_{\infty} = f_{\la} \circ e$ for a unique   $\la \in \cals_{\la}^0$ such that $\xi_{\la}(\la)=p$.  The construction depends on some Teichm\"uller theory.  We give a brief summary of what we need before the construction. 
         \item We extend this inverse map to the points of $\partial\Delta \setminus \{\la_0 \}$ whose image, by construction,  is   $\cals_*$.  Note that the map is not defined for $p=\la_0$; its image  must be a parameter singularity in  $\overline{\cals_*}$.  
    \end{itemize}
    \end{enumerate}

   \medskip
The proof of theorem~\ref{calsla} is contained in the next subsections.

     \subsection{The Model Space}\label{modelspace}
     Every point $\la \in \Omega_1$ corresponds to a function $f_{\la}$ with a non-zero attracting fixed point  denoted by $q_{\la}$; its  attractive basin is denoted by $K_{\la}$.   By  propositions~\ref{compinv} and \ref{Cantordichot}, it is simply connected and completely invariant. In fact,   $f_{\lambda}$ is quasiconformally conjugate to $ t \tan z$ for some real  $t\geq 1$ whose Julia set is the real line (see \cite{DK}), so its Julia set  is the quasiconformal image of a line (see figure \ref{modelbasin} where $\rho=2/3$).  In  figure~\ref{modelbasin}, the cyan colored region is $K_{\la}$ and the yellow region is the basin of $0$, $A_{\la}$.  The black dots are poles on the boundary of $K_{\la}$ and are in the Julia set.   Denote the closure of $K_{\la}$ by $\overline{K_{\la}}$.  It is the analogue of the filled Julia set for a quadratic map.

Because the multiplier map $\nu$ is a universal covering from $\Omega_1$ to $\DD^*$, we can find a sequence of points $\la_j \in \Omega_1$, $j \in \ZZ$ such that $\nu(\la_j)=f_{\la_j}'(q(\la_j))= \rho$.  We choose one, denote it by $\la_0$, and set $q_{0}=q_{\la_0}$. 
We set $Q(z)=f_{\la_0}(z)$ and let $K_0$ denote the attracting basin of $q_0$.   

In figure~\ref{tess} the set $\overline{K}_{0}$ is depicted for   $\rho$  real and $\la_0$ taken as the real solution to $\nu(\la_0)=f_{\la_0}'(q(\la_0))= \rho$.  Since  the multipliers of both attracting fixed points, $0$ and $q_0$, are the same,   
  there is a real $t=t(\rho)$, $\sigma=it$, such that  $Q(z)$ and $it \tan iz =t \tanh z$ are not only quasiconformally conjugate but affine conjugate and the Julia set of $Q(z)$ is a vertical line.  
  
There is a local uniformizing map, which we  denote by $\phi_0$,  $\phi_0: K_0 \rightarrow \CC$,  normalized so that $\phi_0$ maps $q_0$ to $0$, $\phi_0'(q_0)=1$  and $\phi_0$ conjugates $Q$ to $\zeta \to \rho \zeta$ in a neighborhood of $q_0$.  We can extend $\phi_0$ to all of $K_0$ by analytic continuation.  Note that $\phi_0(z)=0$ if and only if $Q^n(z)=q_0$ for some $n$.

\begin{figure}
  \centering
  \includegraphics[width=5in]{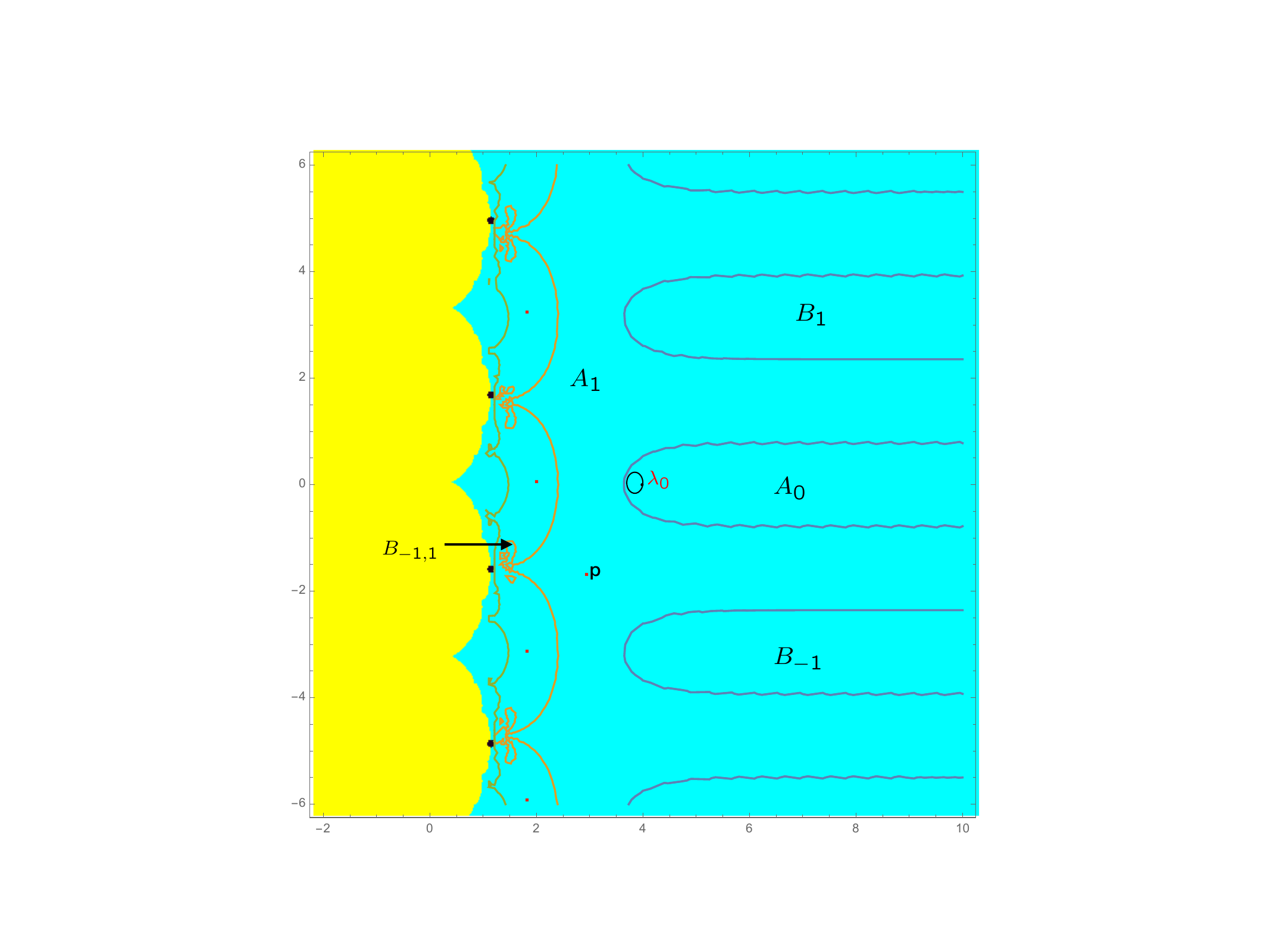}
  \caption{The ``filled Julia set" of $Q(z)$. The black dots are poles.}
  \label{modelbasin}
\end{figure}

Let  $r=|\phi_0(\la_0)|$ and let $\gamma^*=\phi^{-1}_0(re^{i\theta}), \, \theta \in \RR$. It is a simple closed curve;  let $\Delta$ be the closed topological
disk in $K_0$ bounded by $\gamma^*$.  Then $\phi_0$ is injective on $\Delta$ and $\la_0$ is on $\partial\Delta$.

\begin{lemma}\label{the map E} There is an injective holomorphic map $E:\cals_{\la}^0 \rightarrow K_0 \setminus \Delta$.  Set $w=E(\la)$;   $E$ satisfies:
\begin{enumerate}[(i)]
\item For each $\la \in \cals_{\la}$  such that $f_{\la}^n(\la)=0$ for some $n$, $E$ maps it to a preimage of $q_0$; that is,   if  $w=E(\la)$, then $Q^n(w)=q_0$.
\item For each $\la \in \cals_{\la}$ such that $f_{\la}^n(\la)=f_{\la}^m(\mu)$ for some $m,n$, $E$ maps it to a point in the grand orbit of $\la_0$; that is,  if  $w=E(\la)$, then  $Q^n(w)=Q^m(\la_0)$.
\item
As $\la$ tends to the boundary   $\cals_*$ of $\cals_{\la}$, $w=E(\la)$ tends to $\partial\Delta  \setminus \{\la_0 \}$.
\end{enumerate}
\end{lemma}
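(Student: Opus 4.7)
\emph{Proof plan.} The plan is to build $\xi_\la$ from the Koenigs linearizers $\phi_\la$ at $0$ and $\phi_0$ at $q_0$, both of which conjugate their respective dynamics to $\zeta\mapsto\rho\zeta$. Since $\la\in\cals_\la^0$ forces $\mu\in\partial O_\la$, the image $\phi_\la(O_\la)$ is the round disk $D(0,R_\la)$ with $R_\la=|\phi_\la(\mu)|$, and by construction $\phi_0(\mathrm{int}\,\Delta)=D(0,r)$ with $r=|\phi_0(\la_0)|$. First I would exploit the residual freedom $\phi_\la\mapsto c\phi_\la$ (which preserves the conjugacy $\phi_\la\circ f_\la=\rho\phi_\la$) to normalize so that $\phi_\la(\mu)=\phi_0(\la_0)$, identifying $\phi_\la(O_\la)$ with $\phi_0(\mathrm{int}\,\Delta)=D(0,r)$. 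On $O_\la$ I then set $\xi_\la=\phi_0^{-1}\circ\phi_\la$, producing a biholomorphism $O_\la\to\mathrm{int}(\Delta)$ with $\xi_\la(0)=q_0$ and $\xi_\la(\mu)=\la_0$.

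To reach $\la$, which in $\cals_\la^0$ lies outside $\overline{O_\la}$, I would extend $\xi_\la$ by iterated pullback using the functional equation $\xi_\la\circ f_\la=Q\circ\xi_\la$. For each $z\in A_\la$, let $n(z)\geq 0$ be the least integer with $f_\la^{n(z)}(z)\in O_\la$ and define $\xi_\la(z)$ to be the branch of $Q^{-n(z)}\bigl(\xi_\la(f_\la^{n(z)}(z))\bigr)$ obtained by lifting through the unbranched coverings $f_\la:A_\la\setminus f_\la^{-1}(\{\la,\mu\})\to A_\la\setminus\{\la,\mu\}$ and $Q:K_0\setminus Q^{-1}(\la_0)\to K_0\setminus\{\la_0\}$ along the forward orbit segment from $z$ back to $f_\la^{n(z)}(z)$. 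Consistency of the branch choices rests on the matching logarithmic-tract structure of $f_\la$ and $Q$ at their asymptotic values, which is inherited from the common Nevanlinna form of both maps. Set $E(\la)=\xi_\la(\la)$; this is holomorphic in $\la$ since $\phi_\la$ is, and the pullback involves only finitely many steps.

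The inclusion $E(\la)\in K_0\setminus\Delta$ follows because $\la\in A_\la\setminus\overline{O_\la}$ forces $n(\la)\geq 1$, so the first pullback already lands in the component of $Q^{-1}(\mathrm{int}\,\Delta)$ glued to $O_0=\mathrm{int}(\Delta)$ across $\partial\Delta$, which sits in $K_0\setminus\overline{O_0}=K_0\setminus\Delta$. Properties (i)--(iii) then follow from the intertwining $\xi_\la\circ f_\la^k=Q^k\circ\xi_\la$: case (i) gives $Q^n(E(\la))=\xi_\la(0)=q_0$; case (ii) gives $Q^n(E(\la))=Q^m(\xi_\la(\mu))=Q^m(\la_0)$; and for (iii), as $\la\to\cals_*$ the asymptotic value $\la$ migrates to $\partial O_\la$, so $|\phi_\la(\la)|\to R_\la=r$ and therefore $E(\la)\to\partial\Delta$. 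The limit avoids $\la_0$ because $\phi_\la(\la)\ne\phi_\la(\mu)$ whenever $\la\ne\mu$.

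The main obstacle will be injectivity of $E$. My plan is to couple it with the inverse construction sketched in the next subsection: for each $w\in K_0\setminus\Delta$, a tower of infinite-degree coverings as in Remark~\ref{morecover}, followed by a Measurable Riemann Mapping straightening, produces a single $f_\la\in\cals_\la^0$ with $\xi_\la(\la)=w$. The uniqueness in that inverse construction, combined with the fact that the composition $E\circ(\text{inverse})$ is forced to be the identity by the matching normalizations $\xi_\la(0)=q_0$, $\xi_\la(\mu)=\la_0$ and by the universality of the pullback relation $\xi_\la\circ f_\la=Q\circ\xi_\la$, delivers the injectivity of $E$.
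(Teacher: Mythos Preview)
Your construction of $\xi_\la=\phi_0^{-1}\circ\phi_\la$ with the normalization $\phi_\la(\mu)=\phi_0(\la_0)$, the extension by pullback along the conjugacy $\xi_\la\circ f_\la=Q\circ\xi_\la$, and the verification of (i)--(iii) all match the paper's argument essentially verbatim. The paper phrases the extension as ``analytic continuation'' using simple connectivity of $K_0$, but your lift-along-orbits description is the same thing made explicit.

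The one substantive divergence is injectivity. The paper does \emph{not} defer this to the inverse construction; it argues directly. If $E(\la)=E(\la')$, then $\xi_{\la',\la}=\xi_{\la'}^{-1}\circ\xi_\la=\phi_{\la'}^{-1}\circ\phi_\la$ is a conformal conjugacy between $f_\la$ and $f_{\la'}$ defined near the origin, which propagates to all of $A_\la$. Because both maps are hyperbolic with Cantor Julia sets, those Julia sets are holomorphically removable, so $\xi_{\la',\la}$ extends to an entire conformal automorphism of $\CC$, hence is affine; the marking of the preferred asymptotic value then forces it to be the identity and $\la=\la'$.

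Your proposed route through the inverse construction has a logical gap as stated: knowing that the inverse $F$ is well-defined (each $w$ yields one $\la(w)$) and that $E\circ F=\mathrm{id}$ gives surjectivity of $E$, not injectivity. To get injectivity you would need $F\circ E=\mathrm{id}$, i.e., that \emph{any} $\la$ with $E(\la)=w$ coincides with the $\la(w)$ produced by the MRMT straightening. Proving that is precisely a rigidity statement---two hyperbolic maps in $\cals_\la^0$ that are conformally conjugate on their basins with matching marked asymptotic values must be equal---and the cleanest way to establish it is exactly the holomorphic-removability argument the paper uses. So your plan is not wrong, but it postpones the real content; you should replace the deferred injectivity step with the direct rigidity argument above.
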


 \begin{proof}  The map $E$ is defined as follows.  Given $\la \in \cals_{\la}^0$, we defined a conformal homeomorphism $\phi_{\la}$ from the neighborhood $O_{\la}$  in  the attracting basin $A_\la$ to a disk centered at zero with $\phi_{\la}(0)=0$ that conjugates $f_{\la}$ to $ \zeta \mapsto \rho \zeta$.   The map is defined up to affine conjugation and depends holomorphically on $\la$.    We assume here that it is normalized so that $\phi_{\la}(\mu)=\phi_0(\la_0)$.  For each $\la$,  define a  map $$\xi_\lambda=\phi_0^{-1}\circ \phi_\lambda: O_\lambda\to \Delta.$$ From the definitions of $\phi_\lambda$ and $\phi_0$, it follows that $\xi_\lambda(0)=q_0$ and $\xi_\lambda(\mu)=\lambda_0$. Since $f_{\la_0}'(0)=Q'(q_0)=\rho$,  the map $\xi_\lambda$ is a conformal homeomorphism  from $O_{\la}$ to $\Delta$ and it conjugates $f_{\la}$ to $Q$. 
  
  Now we are ready to define the map $E$ from $\mathcal{S}_\lambda^0\to K_0\setminus \Delta$ as
 \[ E(\la) = \xi_{\la}(\la). \]
By construction $E$ satisfies properties~(i) and ~(ii).

 Suppose $\xi_{\la'}(\la')=\xi_{\la}(\la)$. The map $\xi_{\la', \la}=\xi_{\la'}^{-1}\xi_{\la}=\phi_{\la'}^{-1}\phi_{\la}$ restricted to a neighborhood of the origin in the basin $A_{\la}$ defines a holomorphic conjugacy between $f_{\la}$ and $f_{\la'}$ on this neighborhood. Since $\xi_{\la', \la} (\la)=\la'$, we can extend this holomorphic conjugacy by the dynamics of $f_{\la}$ and $f_{\la'}$ to a holomorphic conjugacy, which we still denote by $\xi_{\la', \la}$, defined on the whole stable set $A_{\la}$. Furthermore, by using dynamics of $f_{\la}$ and $f_{\la'}$, we can extend this 
 holomorphic conjugacy to to the Julia set $J_{\la}$ as a topological conjugacy that fixes infinity.  
 Since $\chat=A_{\la}\cup J_{\la}$,  if we denote this extension by $\xi_{\la, \la'}$ again, we have 

$$
\xi_{\la', \la}\circ f_{\la} = f_{\la'}\circ \xi_{\la', \la} \quad \hbox{on $\chat$}.
$$

From the discussion on $J$-stability in subsection 5.1,  we can find a  holomorphic motion $H(z, c): \overline{E}_{\la}\times V\to \chat$ such that $\xi_{\la, \la'}|\overline{E}_{\la} =H_{\la, \la'}=H(\cdot, \la')$ 
is a quasiconformal homeomorphism on $\overline{E}_{\la}=J_{\la}$. 
On $A_{\la}= \chat\setminus \overline{E}_{\la}$, $\xi_{\la, \la'}$ is holomorphic and injective, thus it is conformal. 
Now by a theorem of Rickman  (see~\cite[Theorem 1]{Rick} or~\cite{DH} or~\cite[Theorem 5.1]{J}) it follows that $\xi_{\la, \la'}$ 
is a global quasiconformal mapping of $\chat$.  It follows from the paper of Zheng, (see~\cite[Theorem 3.1]{ZJ}),  that the area of the Julia set $J_{\la}$ is zero and $\xi_{\la, \la'}$ is a global conformal mapping of $\chat$.  
Since $\xi_{\la', \la}$ fixes zero and infinity,   $\xi_{\la', \la}(z)=az$.   Equation~\ref{eqn - mult}  implies that $a=1$, $\la=\la'$ which proves that $E$ is injective. 
Note that as we saw in section~\ref{2avs},  there are two choices for $\la'$ but if we require that $\la'$ is the preferred asymptotic value so that   $\la' \in \cals_{\la}$, then  $\xi_{\la',\la}$ is the identity.
   
Property (iii) follows since as  $\la$ tends to the boundary  $\cals_*$ of $\cals_{\la}$,  the asymptotic value $\la$ tends toward the leaf of the dynamically defined level curve containing $\mu$ in the dynamic plane of $f_{\la}$; thus $E(\la)$ tends to a point on  the corresponding level curve, $\partial\Delta \setminus \{ \la_0 \}$ in $K_0$.
\end{proof}

Note that the map $E^{-1}$ is not defined at the point $\la_0$ on $\partial\Delta$ because if it were, the asymptotic values of the function corresponding to the image point would be equal.   Thus the point omitted by $E^{-1}$ would be a parameter singularity, and by continuity, a punctured neighborhood of it would contain points in both $\cals_{\la}$ and $\cals_{\mu}$.    There are only two parameter singularities, $0$ and $\rho/2$;  the latter is a virtual center on the boundary of $\calm_{\mu}$ so  small neighborhoods do not contain points of $\cals_{\la}$.  Therefore the point omitted by $E^{-1}$ is  $0$.   We can extend $E^{-1}$ to $\la_0$ by setting $E^{-1}(\la_0)=0$ so that $E^{-1}(\partial\Delta)$ is the closed curve $\cals_* \cup \{0\}$. 

 \begin{remark}  The map $\xi_{\la}$ ties together   the attractive basin of the origin in the dynamical space  of $f_{\la}$ and the attractive basin of $q_0$ in the dynamical space of  $Q$ with the parameter space of $f_{\la}$. \end{remark}

\subsection{Construction of an inverse for $E$.}

\subsubsection{Dynamic decomposition of  $K_0$.}
 To define inverse branches $R_j$ of $Q$ on the $K_0$, let $l^*$ be the gradient curve joining $Q(\la_0)$ to $\la_0$ in $\Delta$ and let  $l \in Q^{-1}(l^*)$ be the curve joining $\la_0$  to infinity.  Remove the line $l$ from $K_0$ and define an inverse branch on its complement by the condition $R_0(q_0)=q_0$.      Label the other branches as
 $R_j(q_0)=q_0+\pi i j=q_j$.   This is equivalent to choosing a principal branch for the logarithm.  Having made this choice, we can extend the $R_j$ analytically to all of $K_0$.   
  Denote the preimages of $q_0$ under $Q^{-1}$ by $q_j$, enumerated so that $q_0$ is fixed, and denote the inverse branch of $Q$ that sends $q_0$ to $q_j$ by $R_j$.    Denote the upper and lower sides of the line $l$ by $l^+$ and $l^-$ and let $l_j=R_j(l^-)$, $l_{j+1}=R_j(l^+)=R_{j+1}(l^-)$.  Then $R_0$ is a homeomorphism between the open region bounded by the lines $l_0$, $l_1$ and $l$ onto $K_0 \setminus l$ and  $R_j$, $j \neq 0$ is a homeomorphism from the open region between $l_j$ and $l_j+1$ onto $K_0 \setminus l$.

  If $\rho$ and $\la_0$ are real, this choice for the logarithm agrees with the labeling of the poles  and inverse branches in  section~\ref{combinatorics}  where  
 $R_0=g_{\la_0,0}$,    the branch of $Q^{-1}=f_{\la_0}^{-1}$ that fixes the origin.   This is the labeling  in figure~\ref{tess}.   If $\rho$ and/or $\la_0$ isn't real, and a different branch of the logarithm is chosen,  there could be a shift by some $k$ in the labelling.  It would be the same shift throughout the rest of the paper so would not change the essence of the argument.    
 
  \begin{figure}
  \centering
  \includegraphics[width=5in]{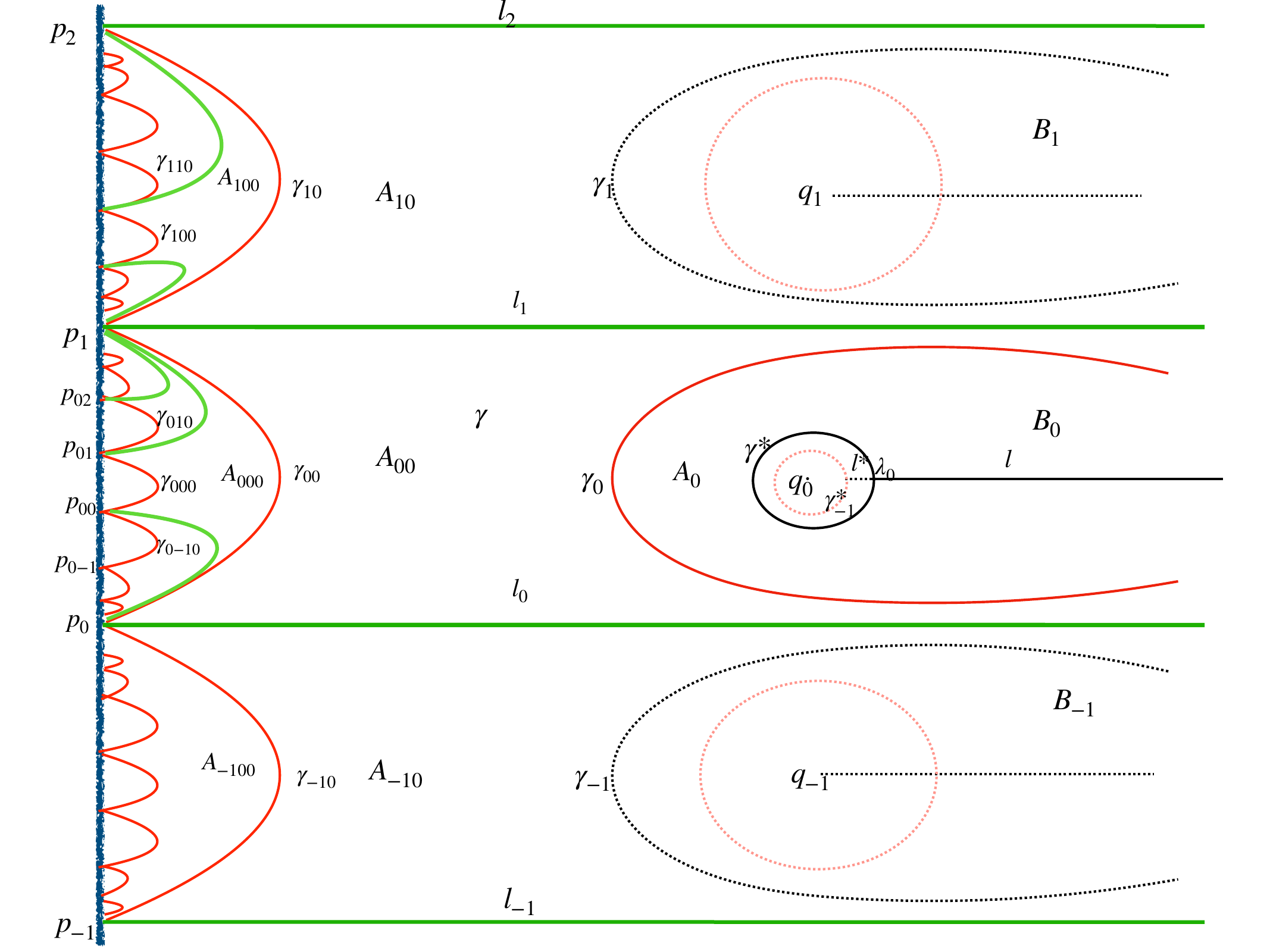}
  \caption{Domains and curves in the construction.}
  \label{tess}
\end{figure}

Recall that  $\gamma^{*}$  is the boundary of $\Delta$ in $K_0$ and it contains $\la_0$.   Then, because $\la_0$ is an omitted value of $Q$,  the curves $\{\gamma_{j} \}=R_j(\gamma^*), \, j \in \ZZ$, are a countable collection of bi-infinite disjoint curves whose infinite ends  approach infinity asymptotic to the lines $l_{j}$ and $l_{j+1}$. 
Thus $R_j(\Delta)$ is  an unbounded domain, with boundary $\gamma_j$, that contains $q_j$. Note that $R_0(\Delta)$ contains the removed line $l$.  We label the complementary components of the  $\gamma_j$ as follows (see figure~\ref{tess}): 

\begin{itemize}
\item  $A_0=R_0(\Delta)$ is the component of the complement    of  $\gamma_0$ containing the fixed point $q_0$ and the point $\la_0$. 

\item $B_{j}, \, j \in \ZZ \setminus \{0 \}$ are the components of the complement of $\gamma_j$ containing the non-fixed preimages $q_{j}$ of $q_0$ and

\item $C_0$ is the common  complementary component in $K_0$ of $A_0$ and  all the $B_{j}$.
\end{itemize}

To define the second preimages of $q_0$ and $\gamma^*$ we need two indices.  Thus $q_{j_2 j_1}=R_{j_2}R_{j_1}(q_0)$ and $\gamma_{j_2 j_1} =R_{j_2}R_{j_1}(\gamma^*)$  where $j_1, j_2, \in \ZZ$.  They divide $K_0$ into domains as follows  (see figure~\ref{tess}):

\begin{itemize}
\item   Since $A_0$ is simply connected and contains one asymptotic value, $Q: A_1=Q^{-1}(A_0)\to A_0\setminus \{\lambda_0\}$ is a universal covering.    Set $A_{j0}=R_j(A_0)$; it is bounded by $l_j$,  $l_{j+1}$ and $\gamma_{j0}$.   
 Each $\{\gamma_{j0}\}$ joins the   pole $R_j(\infty)$ to the pole $R_{j+1}(\infty)$; these two poles are different but adjacent because the infinite ends of $\gamma_0$ are on opposite sides of the line $l$ defining the principal branch.   

\item  Since $B_{j_1}$ is simply connected and  contains no asymptotic value for any $j_1\neq 0$, each component of $Q^{-1}(\gamma_{j_1})$ is homeomorphic to $\gamma_{j_1}$. The curves $\gamma_{j_2 j_1}$ bound domains containing the preimages $q_{j_2 j_1}$.  Label these domains $B_{j_2 j_1}=R_{j_2}(B_{j_1})$.

\item There are domains $C_{j0}=R_j(C_0)$.
\end{itemize}

Inductively we have curves 
\[ \gamma_{j_n \ldots j_1} = R_{j_n}(\gamma_{j_{n-1}} \ldots j_1). \]
and the regions they define as follows  (see figure~\ref{tess});\\
 \begin{itemize}
 \item    $A_{n}=R_0(A_{n-1})$;  it contains $q_0$ and $\lambda_0$. It also contains all preimages of $q_0$ up to order $n-1$ but not  those of order $n$.  It is bounded by a curve $Q^{-n}(\gamma_0)$ that is a union of open arcs with endpoints at adjacent prepoles of order $n$.  These are the red curves without labels closest to the vertical line in figure~\ref{tess}.  
 \item $B_{j_n j_{n-1} \ldots  j_1} =R_{j_n}(B_{j_{n-1} \ldots j_1})$; it contains the preimage $q_{j_n j_{n-1} \ldots j_1}$ of $q_0$.  These are not shown in the figure.  They are bounded by a single curve with a boundary point at a prepole of order $n-1$. 
  \item $C_{j_n j_{n-1} \ldots, 0} =  R_{j_n}(C_{ j_{n-1} \ldots  0} ) $.
  \end{itemize}

\subsubsection{Inductive construction of the  pair $(U_{\infty}, Q_{\infty})$}
 See figure~\ref{setU}.
\begin{itemize}
\item Pick $p \in K_0 \setminus \Delta$.  In figure~\ref{setU}, $p$ is in $A_{00}$. Following the outline above, part (3), we construct a map with the asymptotic values $p$ and $\lambda_0$ as follows.    Let $\hat{p}_j=R_j(p)$;  in the figure the $\hat{p}_j$ are in $A_{j00}$.  
    Let $N$ be the smallest integer such that  $p$ is in $A_{N} \cup   B_{j_N, j_{N-1} \ldots, j_1}$.  The boundaries of the sets in this union are the level sets  $\phi_{0}^{-1}(\rho^{-N} re^{i\theta})$ where, as above,  $r=|\phi_0(\la_0)|$. 
 For every small  $\epsilon>0$, one component of the level set   $\phi_0^{-1}( (\rho^{-N} r +\epsilon)e^{i\theta})$ is an analytic curve, except at the prepoles of order $N-1$. It  bounds a simply connected domain containing $A_N$;   it thus contains the points $p, q_0, \la_0$ and the curves $\gamma_{j_N, \ldots j_1}$,  but none of preimages $\hat{p}_j$ of $p$.  Fix $\epsilon$, and denote the resulting  domain  by $U$. Its boundary is  denoted by the dotted black curve in figure~\ref{setU}.  Since it is contained in the attracting basin of $q_0$, $Q(U) \subset U$.   Moreover, since $U$ does not contain any of the points  $\hat{p}_j$, $p \not\in Q(U)$.
 Set  $\widetilde{U}= U \setminus \{\la_0 , p\}$.

 \begin{figure}
  \centering
  \includegraphics[width=5in]{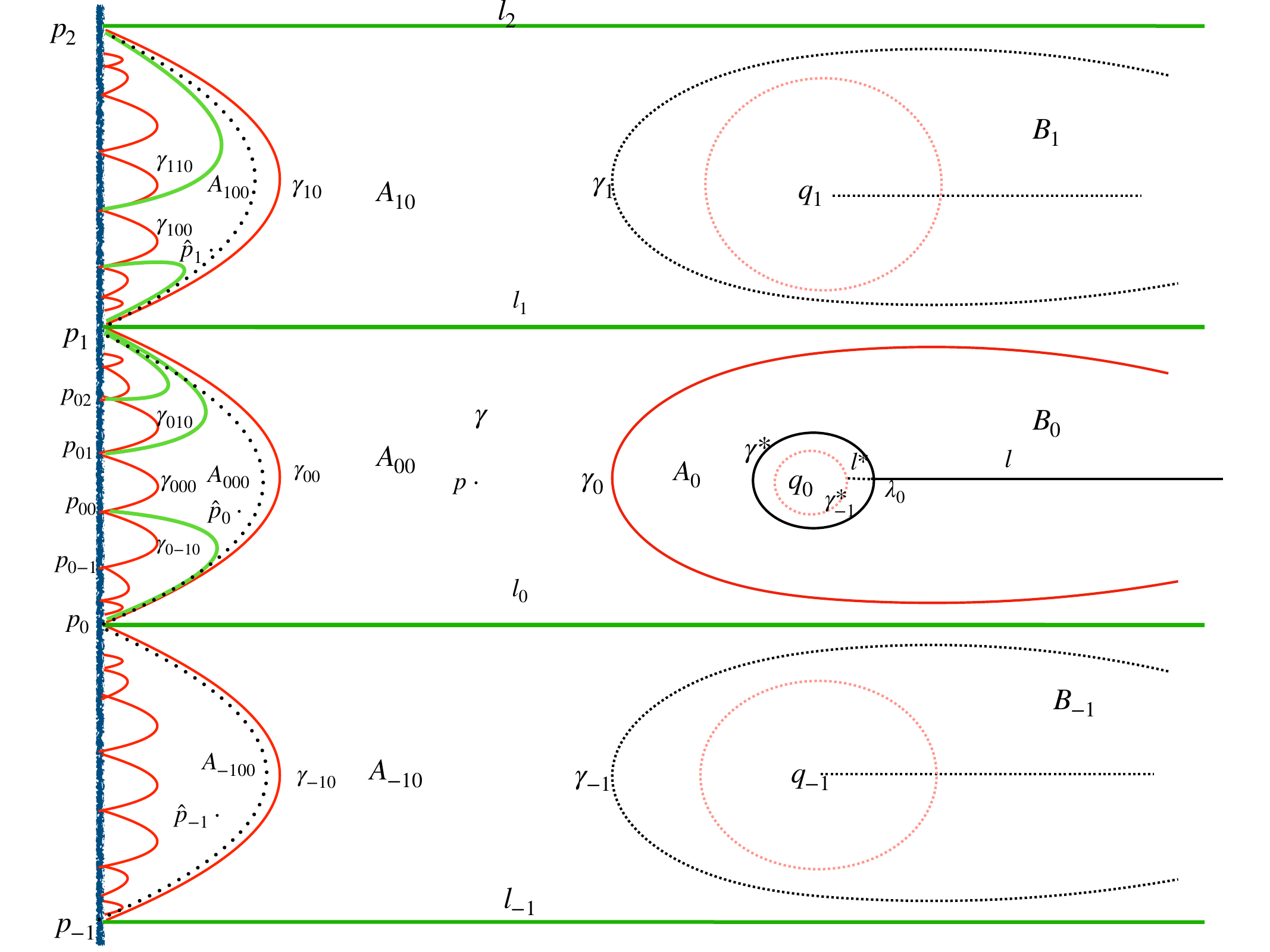}
  \caption{The point $p$ is in $A_{00}$ and the set $U$ is the region to the right of the dotted curves. }
  \label{setU}
\end{figure}

\item Lemma~\ref{cover} implies there is a holomorphic unramified covering map $\Pi_1: U_1 \rightarrow \widetilde{U}$ where  $U_1$ is a Riemann surface  that is topologically a disk minus a countable set of topological disks.

\item  Note that $Q : U \rightarrow Q(U)$ is a holomorphic universal covering map with omitted value $\lambda_0$. Set $U_1'=\Pi_1^{-1}(Q(U))$; since $\lambda_0\in Q(U)$ and $p\notin Q(U)$, it is a topological disk so that $\Pi_1 : U_1' \rightarrow Q(U)$ is also a holomorphic unramified covering map that omits the value $\lambda_0$.

    \centerline{
\xymatrix{U_1\ar[d]_{\Pi_1}\ar@{.>}[r]^{Q_1=i_1\circ \Pi_1} & U_1'\ar[d]^{\Pi_1}\ar@{^{(}->}[r]& U_1\\
U \ar[r]^Q\ar@{.>}[ru]^{i_1}& Q(U)\ar@{^{(}->}[r]& U
}
}
\item Since both $\Pi_1$ and $Q$ are regular coverings whose domains are simply connected, we can lift to  obtain  a conformal map $i_1: U\to U_1'$  such that $\Pi_1 \circ i_1=Q.$     The choice of inverse branch will affect $i_1$ but the argument works for any choice. We now define  $Q_1: U_1 \rightarrow U_1'$ by $Q_1= i_1\circ \Pi_1$.  It is an unramified regular  infinite to one holomorphic endomorphism and omits the values $i_1(\lambda_0)$ and $i_1(p)$. Moreover, $$ Q_1\circ i_1=i_1\circ \Pi_1\circ i_1=i_1\circ Q;$$ that is, $i_1$ conjugates $Q$ and $Q_1$, therefore $Q_1$ fixes $i_1(q_0)$. 

We may, without loss of generality, assume $i_1(\lambda_0)=\lambda_0$, $i_1(p)=p$ and $i_1(q_0)=q_0$. %

\item
Now set $Q_{0}=Q$, $U_{0}=U$ and  $U_{0}'=Q(U)$.   We proceed by induction:  we assume that for $1 \leq j \leq n-1$ we have

\begin{enumerate}
\item Domains $U_j$, homeomorphic to an open disk   from which infinitely many open disks been removed, and infinite to one unramified covering maps $\Pi_j:  U_j \rightarrow U_{j-1}$ with two asymptotic values.
\item Holomorphic endomorphisms,  $Q_j: U_j \rightarrow U'_j\subset U_j$, that are infinite to one,  unramified, have one fixed point and two asymptotic values. 
\item  Conformal maps $i_{j}: U_{j-1} \to U'_{j}$ satisfying
$$ Q_j\circ i_j=i_j\circ Q_{j-1}.$$
\end{enumerate}

For the inductive step, we use Remark~\ref{morecover} to obtain the holomorphic unramified covering map $\Pi_n: U_{n} \rightarrow U_{n-1}$ where $U_{n}$ is homeomorphic to $U \setminus \{U_{j_{n-1} \ldots j_1 j},  \, \, ( j_{n-1}, \ldots, j_1,j)  \in \ZZ^{n} \}$.   As in the first step we set  $U_{n}'=\Pi_{n}^{-1}(Q_{n-1}(U_{n-1}))$.    Both
\[  Q_{n-1}: U_{n}' \rightarrow Q_{n-1}(U_{n-1}) \mbox{ and }   \Pi_{n}:U_n \rightarrow Q_{n-1}(U_{n-1}) \]
are unramified coverings with asymptotic values $\lambda_0$ and $p$.   Lemma~\ref{cover} implies there are infinitely many choices for a holomorphic isomorphism
\[ i_{n}: U_{n-1} \rightarrow U_{n}' \mbox{  satisfying  } \Pi_{n} \circ i_{n}= Q_{n-1} \mbox{  on   } U_{n-1}.  \]
Making one such choice (the choice doesn't matter) we define $Q_{n} : U_{n} \rightarrow U_{n}$ by
$  Q_{n}=i_{n} \circ \Pi_{n}$ so that 
\[ Q_n\circ i_n= i_n \circ \Pi_n\circ i_n=i_n\circ Q_{n-1}. \]
Therefore $i_{n}$ conjugates $Q_n$ to $Q_{n-1}$ and the induction hypotheses are satisfied, completing the inductive step.

\item
The direct limit $U_{\infty}$ of the system $(U_n, i_n)$ is the quotient
\[ \cup_n U_n / \sim  \]
where the equivalence relation  is defined by the  identifications,  $ z \sim i_n(z)$, and the equivalence class is denoted by $[z]$.  The Riemann surface $U_{\infty}$ has infinite type.
There is an infinite unramified holomorphic covering map $Q_{\infty}$ defined by
\[  Q_{\infty}([z])=[Q_n(z)],  \, \, z \in U_n  \]
that has two omitted values  $[\lambda_0]$ and $[p]$.  It also fixes  $[q_0]$ and since the maps $\Pi_n$ and $i_n$ are holomorphic we have   $Q_{\infty}'([q_0])=\rho$.     

 Topologically $U_{\infty}$ is the complement in $\CC$ of a Cantor set $C$ isomorphic to the space of  infinite sequences  $\Sigma_{\infty} = s_1, \ldots s_{n-1},   s_n  \ldots,  s_j \in \ZZ$ together with the finite sequences $\Sigma_{n+1}=s_1, \ldots, s_n, \infty $ of length $n+1$.  The map $Q_{\infty}$ is conjugate to the shift map on $C$.   See \cite{Mo}.
     \end{itemize}

  The final step of the proof is to show  there is a  conformal embedding $e: \, U_{\infty} \rightarrow \CC$ such that
  \[ e \circ Q_{\infty} = f_{\la} \circ e \] for some $\la \in \cals_{\la}^0$ with $\xi_{\la}(\la)=\la_0$.    To do this, we first  give a brief informal review the results we need from Teichm\"uller theory  and the   theory of mapping classes of tori and punctured tori.  We refer the reader to \cite{Bir} for a full discussion and \cite{GK} for a discussion analogous to what we need here.
  
\subsubsection{Teichm\"uller theory}\label{teichth}
 Fix $\la \in \cals_{\la}^0$ and set $f=f_{\la}$. 
 
    \begin{definition}
 Let $QC(f )$ be the set of  quasiconformal maps  $h :\CC \rightarrow \CC$   such that  $g=h \circ f \circ  h^{-1}$ is  meromorphic.  Since $g$ is a meromorphic infinite to one unbranched cover of the plane with two omitted values, by the corollary to Nevanlinna's theorem, corollary~\ref{Nevcor} ,  it   is affine conjugate to a map $f_{\la'} \in \calf_2$ and we choose the conjugacy so that $\la'$ is the preferred asymptotic value.

We define the Teichm\"uller equivalence relation on $QC(f)$ as follows:  elements $h_1, h_2$ of $QC(f )$ are equivalent if there is an affine map $a$ and an isotopy from $h_1$ to $a\circ h_2$ through elements of $QC(f )$.   The quotient space of $QC(f )$ by this equivalence relation is called the Teichm\"uller space $Teich(f)$ with basepoint $f$.
  \end{definition}

   Let $QC_0(f)$ denote the elements of $QC(f)$ that conjugate $f$ to itself and $QC_0^*(f)$ those that preserve the marking of the asymptotic values.
 \begin{definition} The {\em  mapping class group}, $MCG(f)$,   is the quotient of $QC_0(f)$ by the Teichm\"uller equivalence relation and the  {\em pure  mapping class group}, $MCG^*(f)$,   is the quotient of $QC_0^*(f)$ by the Teichm\"uller equivalence relation. The {\em moduli space} and {\em pure moduli space}  are defined as the quotients  $\rmm(f) =Teich(f)/MCG(f)$  and $\rmm^*(f) =Teich(f)/MCG^*(f)$.
 \end{definition}

 \begin{remark}\label{slice} Because we are working in a  dynamically natural slice of $\calf_2$ defined by the conditions that $0$ is fixed and has multiplier a fixed $\rho$, we restrict our considerations here to the slice $Teich(f, \rho) \subset Teich(f)$ of equivalence classes of quasiconfomal maps $h$ such that $h \circ f \circ h^{-1}$ has a fixed point with multiplier $\rho$.   The mapping class group and pure mapping class group act on $Teich(f,\rho)$.  The $\la$ parameter plane is identified with the pure moduli space  $\rmm^*(f,\rho)$.
  For readability below, since we always assume we are in this slice, we  drop the $\rho$ from the notation. 
 \end{remark}

 Because the quasiconformal maps conjugate the dynamics, and the dynamics are controlled by the orbits of the asymptotic values, the space $Teich(f_{\la})$ is related to the Teichm\"uller space of a twice punctured torus defined by the dynamics.  We explain this here. 

 \begin{definition} The points $z_1,z_2$ are {\em grand orbit equivalent} if there are integers $m,n \geq 0$ such that $f_{\la}^m(z_1)=f_{\la}^n(z_2)$.  They are {\em small orbit equivalent} if for some $n>0$, $f_{\la}^n(z_1)=f_{\la}^n(z_2)$.  Denote the grand orbit equivalence classes by $[z]$. 
 \end{definition}

 Now $\phi_{\la}(z)=0$ if and only if $z$ is grand orbit equivalent to $0$.   Let $\hat{A_{\la}}$ denote  the complement of the grand orbit of $0$ in $A_{\la}$.    We have

 \begin{lemma}\label{orbitequ}  The restriction of $\phi_{\la}$ to $\hat{A_{\la}}$ is a well defined map from each small equivalence class to a   point in $\CC^*$.
 \end{lemma}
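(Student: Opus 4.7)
The plan is a direct unwinding of the functional equation satisfied by $\phi_\la$, together with the characterization of its zero set given just before the lemma. Two things have to be checked: that $\phi_\la$ takes values in $\CC^*$ on $\hat{A_{\la}}$, and that it is constant on each small orbit equivalence class contained in $\hat{A_{\la}}$.

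First I would verify that $\phi_\la$ does not vanish on $\hat{A_{\la}}$. The identity $\phi_\la \circ f_\la = \rho \phi_\la$ together with $\phi_\la(0)=0$ implies that $\phi_\la(z)=0$ if and only if $f_\la^m(z)=0$ for some $m \geq 0$, i.e., if and only if $z$ lies in the grand orbit of $0$. Since $\hat{A_{\la}}$ is by definition the complement of this grand orbit in $A_{\la}$, the restriction $\phi_\la|_{\hat{A_{\la}}}$ takes values in $\CC^*$.

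Next I would show $\phi_\la$ is constant on small orbit equivalence classes. Iterating the conjugacy relation gives $\phi_\la(f_\la^n(w)) = \rho^n \phi_\la(w)$ for every $w \in A_{\la}$ and every $n \geq 1$. If $z_1, z_2 \in \hat{A_{\la}}$ are small orbit equivalent, fix $n>0$ with $f_\la^n(z_1) = f_\la^n(z_2)$. Then
\[ \rho^n \phi_\la(z_1) = \phi_\la(f_\la^n(z_1)) = \phi_\la(f_\la^n(z_2)) = \rho^n \phi_\la(z_2), \]
and since $\rho \in \DD^*$ is nonzero, dividing by $\rho^n$ yields $\phi_\la(z_1) = \phi_\la(z_2)$. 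Thus $\phi_\la$ descends to a well-defined $\CC^*$-valued function on the set of small orbit equivalence classes in $\hat{A_{\la}}$.

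There is essentially no obstacle here; the only minor point worth noting is that $\hat{A_{\la}}$ is saturated with respect to the small orbit relation, which follows because the grand orbit of $0$ is both forward and backward invariant under $f_\la$ and is therefore a union of small orbit equivalence classes. Consequently, the passage to the quotient makes sense and the induced map lands in $\CC^*$ as claimed.
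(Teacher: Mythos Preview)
Your proof is correct and follows essentially the same idea as the paper: both arguments use the conjugacy relation $\phi_\la \circ f_\la = \rho\,\phi_\la$ to conclude that small orbit equivalent points have the same $\phi_\la$-value. The paper phrases this by first pushing both points forward into $O_\la$ (where $\phi_\la$ is initially defined) and then invoking the relation there, whereas you apply the globally valid functional equation directly and divide by $\rho^n$; your version is slightly more explicit but the content is the same.
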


 \begin{proof} If $z_1,z_2$ are small orbit equivalent there is some integer $N$ such that for all $n \geq N$, $f_{\la}^n(z_1)=f_{\la}^n(z_2)$.  Moreover, for  all large $n$, $f_{\la}^n(z) \in O_{\la}$ and, since  $\phi_{\la}$ is injective on $O_{\la}$, the lemma follows.
 \end{proof}

 Let $\Gamma_{\rho}$ be the group generated by $z \mapsto \rho z$ in $\CC^*$.  The projection $\tau_{\rho}: \CC^* \rightarrow \CC^*/\Gamma_{\rho} = T_{\rho}$  is a holomorphic covering map onto a torus $T_{\rho}$.  Following common usage, we say that its modulus is $\rho$.  Set $T=T_{\rho}$ since $\rho$ is fixed in this discussion.   
 
  Define the composition of $\phi_{\la}$ with $\tau_{\rho}$ by
 \[  \Phi_{\la}:\hat{A_{\la}} \stackrel{\phi_{\la}}\rightarrow \CC^* \stackrel{\tau_{\rho}}\rightarrow T. \]

By lemma~\ref{orbitequ}, we see that $\Phi_{\la}$ identifies  $\hat{A_{\la}}$ in the dynamical space of $f_{\la}$ with the torus $T$ because each grand orbit  in $\hat{A_{\la}}$ maps to a unique point on $T$.   Notice that $T$ depends only on $\rho$ and not on $\la$.   Let $\gamma^*$ be the level curve through the asymptotic value $\mu$ in $\hat{A_{\la}}$, and $\beta$ its projection on  $T$.  

Since $\la \in \cals_{\la}^0$, the orbit of $\mu$ accumulates on $0$ so it cannot be in the grand orbit of $0$.  It is possible that $\la$ is in the grand orbit of zero, or that for some $m,n$, $f_{\la}^n(\la)=f_{\la}^m(\mu)$.  This can happen only on a discrete set and we assume here that it does not happen for the $\la$ we chose.  

There are two special points on $T$, the points $\la^*=\Phi_{\la}(\la)$ and $\mu^*=\Phi_{\la}(\mu)$.     We mark them so that $\la^*$ is the preferred point.   Let 
$T^2_{\la}= T \setminus \{ \la^*, \mu^* \}$.     
Let $A_{\la}^*=\Phi_{\la}^{-1}(T^2_{\la})$;   then $A_{\la}^* \subset A_{\la}$ is the complement of the grand orbits of $0$ and the asymptotic values.  It is easy to see   that $\Phi_{\la}:A_{\la}^* \rightarrow T^2_{\la}$ is a covering projection.   
 
The Teichm\"uller space $Teich(T^2_{\la})$ is defined as the set of equivalence classes of quasiconformal maps, $[H]$, defined on $T^2_{\la}$, where, as above, the equivalence is through isotopy.    The pure mapping class group $MCG_*(T^2)$ and   pure moduli space 
$\rmm^*(T^2)$ based at $T^2_{\la}$ are defined as for $Teich(f)$: the pure mapping class group consists of equivalence classes $[H]$ that map  $T^2_{\la}$ to itself preserving the marking  and the pure moduli space is formed by identifying   points congruent under the pure mapping class group. Thus the map $\Phi_{\la}$ induces a  map $\Psi: Teich(f) \rightarrow Teich(T^2)$. By standard arguments, see e.g.  \cite{McMSul}, $\Psi$ is a covering map so there is an injection on fundamental groups which translates to an injection of pure mapping class groups:
\[  \Psi_*:  MCG_*(f) \rightarrow MCG_*(T^2).   \]

 Since a quasiconformal map  $H \in Teich(T^2_{\la})$ is not necessarily  the projection by $\Phi_{\la}$ of an $h$ defined on $A_{\la}^*$,  we need to characterize those that are.  
 To do this, we need to understand the image $\Psi_*(MCG_*(f)) \subset MCG_*(T^2)$.   

First of all, to remain in the slice, we require that $\omega(H(T^2_{\la}))$, the torus obtained by applying the ``forgetful map'' $\omega$ that fills in the punctures, is conformally equivalent to $T$ and preserves the isotopy class of $\beta$.   

Suppose $\tilde{\alpha}$ is a curve in $A_{\la}^*$ with initial point $\mu$ and endpoint $\la$ and $[h] \in MCG_*(f)$.   Then $h(\tilde{\alpha})$ has the same property.   The map $H=\Phi_{\la} \circ h \circ \Phi_{\la}^{-1}$ determines a point in $MCG_*(T^2)$ that maps the curve $\alpha^*$  on $T^2_{\la}$ joining $\mu^*$ to $\la^*$ to a curve $H(\alpha^*)$ with the same endpoints.  
   
   Every curve $\alpha'$ on $T^2$ that joins $\mu^*$ to $\la^*$ has  lifts $\Phi_{\la}^{-1}(\alpha')$ whose initial point is at a preimage of $\mu^*$;  let $\tilde{\alpha}'$ be the lift at the asymptotic value $\mu$.   The endpoint of $\tilde{\alpha}$ is in the grand orbit of $\la$, but it isn't necessarily at $\la$.   
Therefore, in order to construct maps in $Teich(f)$ from maps in $Teich(T^2)$, which we do below,  we need to know that we can find those curves $\alpha$ whose lift to $\mu$ lands at $\la$.  Let $\alpha^*$ be such a curve on $T^2$.  

That we can always find these curves is proved in 
 \cite{Bir} where there is a   full treatment of mapping class groups of surfaces.  For  a more detailed discussion analogous to the situation here see \cite{GK}. 

    \begin{figure}
     \centering
  \includegraphics[width=5in]{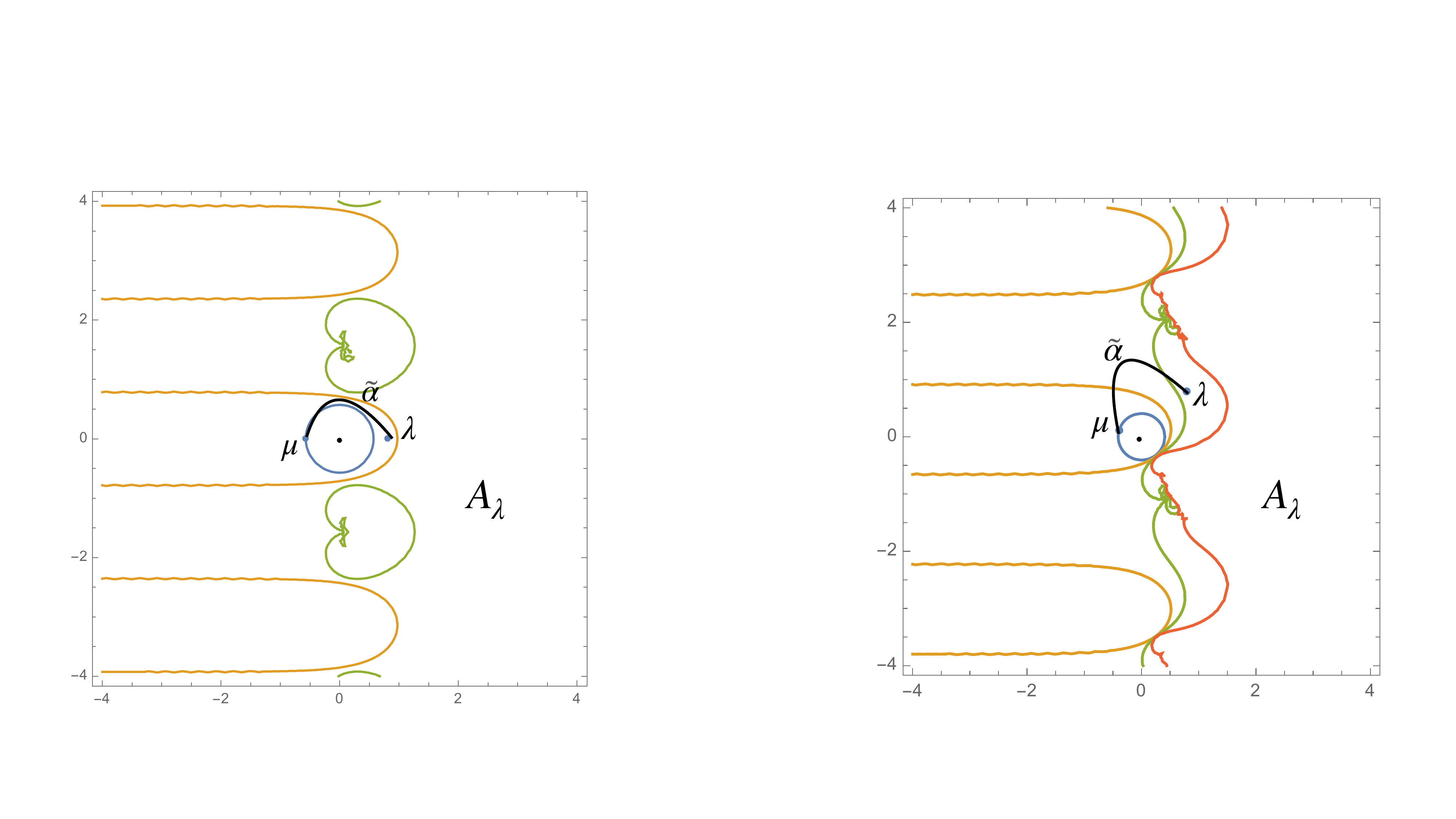}
  \caption{Two examples where the lifted curve is the one we need. }
  \label{lifts}
\end{figure}

In figure~\ref{lifts} we show how the region $A_{\la}$ is divided into fundamental domains that project to $T$ for two different values of $\la$.   In both, $\mu$ is on $\gamma^*$, the boundary of $O_{\la}$, drawn in blue.  The orange curves are the first pullbacks of $\gamma^*$ by $f_{\la}$.  The domain bounded by $\gamma^*$ and one of the orange curves defines a fundamental domain for $\Gamma_{\rho}$. In the left figure, $\la$ is in that fundamental domain.  The green curves are the next pullback, and on the right figure, the red curve is the third pullback and  $\la$ is in a fundamental domain between the second and third pullbacks.  

 \subsubsection{Construction of the embedding $e$}
 \label{map e}
 
  We now construct the   conformal embedding $e: \, U_{\infty} \rightarrow \CC$ such that
  \[ e \circ Q_{\infty} = f_{\la} \circ e \] for some $\la \in \cals_{\la}^0$ with $\xi_{\la}(\mu)=\la_0$.  

Delete the grand orbits of  $[q_0], [\lambda_0]$ and $[p]$ from $U_{\infty}$ to obtain a domain $U_{\infty}^*$.  As we did above for $A_{\la}$, we  form the projection by the grand orbit equivalence
 \[ \Phi_{\infty}: U_{\infty}^* \rightarrow T_{\infty}^2 = T \setminus \{  \Phi_{\infty}(p), \Phi_{\infty}(\la_0) \}\] where again,  $T$ is a torus of modulus $\rho$. 
 
As above, there is some  $\alpha_{\infty}$ that is  a curve on $T^2_{\infty}$ with initial point  $ \Phi_{\infty}(\la_0)$ and endpoint $\Phi_{\infty}(p)$ whose lift to $Q_{\infty}$ at $\la_0$ is a curve $\tilde{\alpha}_\infty$ joining $\la_0$ to $p$.
  
 Let $H: T_{\la}^2 \rightarrow T_{\infty}^2$ be an orientation preserving homeomorphism that preserves the labeling of the punctures and satisfies $H(\alpha_*)=\alpha_{\infty}$.   Then it lifts to a topological conjugacy $h$ between $f_{\la}|A_{\la} $ and $Q_{\infty}$.    

 \[ \xymatrix@1{
 U_{\infty} \ar[d]^{Q_{\infty}} & A_{\la} \ar[d]^{f_{\la}}\ar[l]_h  \ar[r]^g & A_{\la(p)}  \ar[d]^{f_{\la(p)}}\\
  U_{\infty} \ar[d]^{\Phi_{\infty}} & A_{\la} \ar[d]^{\Phi_*}  \ar[l]_h\ar[r]^g & A_{\la(p)} \\
  {T_{\infty}^2} & {T_{\la}^2} \ar[l]_H}\]

  We may assume that $H$ is quasiconformal with Beltrami differential $\nu_{T_{\la}^2}$.  
  and use $\Phi_*$ to lift to a Beltrami differential $\nu$ on $A_{\la} $  compatible with the dynamics.  We set $\nu=0$ on the complement of $A_{\la} $ (the Julia set of  $f_{\la}$), and note that because the map is hyperbolic, this set has measure zero.  We now invoke the measurable Riemann mapping theorem, \cite{AB},  to obtain a quasiconformal homeomorphism $g: \hat\CC \rightarrow \hat\CC$ fixing $0$ and $\infty$, and so  unique up to scale,  such that $g \circ f_{\la} \circ g^{-1}$ is holomorphic.    By Nevanlinna's theorem, theorem~\ref{Nev},  we can assume $g$  is normalized so that $g \circ f_{\la} \circ g^{-1}$ is of the form $f_{\la(p),\rho(p)}$ for some $\la(p )$ where   $\la(p)=g(\la)$  and $\mu(p)=g(\mu)$ is on the boundary of $O_{\la(p)}$,
  the region of injectivity of the uniformizing map at the origin.  Since $g$ is compatible with the dynamics, and both tori $T_{\la}^2$ and $T_{\infty}^2$ have modulus $\rho$; it follows that $g'(0)=\rho$ also.   Thus $\la(p) \in \cals_{\la}^0$ and
the map $e=g \circ h^{-1}$ is the required embedding.

\medskip
To complete the proof we need to show that the correspondence  $p \to \la(p)$ is an inverse of the map $E$.

\[\xymatrix{
K_0 \\
   U_0 \ar@{^{(}->}[u]  \ar[r]^{i_{\infty}} & U_{\infty} \ar[r]^e & {A_{\la(p)} \ar[llu]_{\xi_{\la(p)}}\subset \CC}} \]

 By our construction, $i_{\infty}$ is the direct limit of the maps $i_n$. It  satisfies
\[  e \circ i_{\infty}(p) = \la(p). \] The second asymptotic value of $f_{\la(p)}$ is $\mu(p)$.   By definition, $\xi_{\la}(\mu(\la(p)))=\la_0  \mbox{ and } \xi_{\la}(\la(p))=p \in U_0 \subset K_0$.

In the non-generic cases, the point $p$ in  $K_0 \setminus \Delta$ is either in the grand orbit of the fixed point $q_0$ or the other asymptotic value $\la_0$ and the quotient of $U_{\infty}$ by the grand orbit relation is a once punctured torus.   The construction of the inverse of $E$ is analogous, but simpler in these cases and again yields a unique $f_{\la} \in \cals_{\la}^0$.

\medskip

If we choose $p$ on $\partial\Delta$, the function $f_{\la(p)}$ will have both its asymptotic values on the boundary of   $O_{\la(p)}$.  Only one choice, however, preserves the marking.   

By the Measurable Riemann Mapping Theorem, the quasiconformal map $g$ depends holomorphically on the parameter $p$. Thus, as we vary $p$ analytically along  $\partial\Delta \setminus \{\la_0 \}$, the image $e(p)$ defines an analytic curve  $\cals_*$ in $\cals$.   The construction fails if $p=\lambda_0$ because as $p$ approaches $\la_0$, the limit point   on the analytic curve in $\cals$ is a parameter singularity;   in the construction of $f_{\la}$ from the model,  as $p \to \la_0$, $\la \to 0$.  Therefore we can extend $E^{-1}$ by continuity so that $E(0)=\la_0$; therefore $\cals_* \cup \{0\}$ is homeomorphic to a circle.  

Since the model $K_0 \setminus \Delta$  is topologically an annulus $\AA$,  the above paragraph shows that $E$ extends as a map from the boundary component $\cals_*$ of $\cals_{\la}$ to a boundary component  of $\AA$.

\subsection{Topology of the shift locus}
We are now ready to complete the proof of the Main Structure Theorem.

\begin{thm}[Topology of the shift Locus]\label{thm:shift locus} $\cals$ is homeomorphic to a punctured annulus;  that is, there is a homeomorphism $\Phi: \cals \rightarrow \hat{\CC} \setminus \{0,1,\infty\}$.     \end{thm}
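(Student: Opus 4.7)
The plan is to assemble the shift locus from its two halves $\cals_\la$ and $\cals_\mu$ by gluing along their common boundary $\cals_*$, using Theorem~\ref{calsla} for $\cals_\la$ and transporting the construction to $\cals_\mu$ by the involution $I$ of Proposition~\ref{calmu and calla}. By Theorem~\ref{calsla}, the map $E: \cals_\la^0 \to K_0 \setminus \Delta$ is a homeomorphism, and $E^{-1}$ extends continuously from $\partial\Delta \setminus \{\la_0\}$ onto $\cals_*$, with the limiting extension $\la_0 \mapsto 0$ describing the approach to the parameter-space puncture. Since $K_0$ is a topological disk and $\Delta$ is a closed topological disk in its interior, $K_0 \setminus \Delta$ is a topological annulus, so $\cals_\la = \cals_\la^0 \cup \cals_*$ is homeomorphic to that annulus together with one boundary circle minus a single point.

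Next, the inversion $I(\la) = \la/(2\la/\rho - 1)$ interchanges $\cals_\la$ and $\cals_\mu$ and fixes $\cals_*$ pointwise (Proposition~\ref{symsstar}). Composing yields a homeomorphism $\tilde E := E \circ I : \cals_\mu^0 \to K_0 \setminus \Delta$ whose boundary extension to $\partial\Delta \setminus \{\la_0\}$ agrees pointwise with the extension of $E^{-1}$ on $\cals_*$, precisely because $I$ is the identity on $\cals_*$.

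I then build the model target by taking two copies $\AA^{+}$ and $\AA^{-}$ of the ``closed-on-one-side'' annulus $(K_0 \setminus \Delta) \cup (\partial\Delta \setminus \{\la_0\})$ and identifying their inner boundaries via the identity. The resulting quotient is topologically an open annulus with one interior point removed: two half-annuli glued along a punctured circle form a topological open cylinder with one interior point deleted. Since $\hat\CC \setminus \{0,\infty\}$ is conformally an open annulus, removing one further interior point yields a space homeomorphic to $\hat\CC \setminus \{0,1,\infty\}$. The maps $E$ and $\tilde E$ then assemble into the desired global homeomorphism $\Phi : \cals \to \hat\CC \setminus \{0,1,\infty\}$, sending $\cals_\la$ into the $+$ sheet, $\cals_\mu$ into the $-$ sheet, and $\cals_*$ onto the glued circle. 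The unique interior puncture of the target corresponds to the parameter-space puncture $\la = 0$, which lies in $\overline{\cals}$ but not in $\cals$.

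The main obstacle is verifying the compatibility of $E$ and $\tilde E$ along $\cals_*$; this reduces to the $I$-equivariance of the uniformizing maps $\phi_\la$ recorded in the proof of Proposition~\ref{symsstar}, which shows that $\xi_{I(\la)} = \phi_0^{-1} \circ \phi_{I(\la)}$ coincides with $\xi_\la$ on the common boundary and hence that $E$ and $\tilde E$ match pointwise on $\cals_*$. A secondary point is to confirm that no additional puncture appears: the other parameter-space puncture $\rho/2$ is the virtual center of the period-$1$ shell component of $\calm_\mu$ by Theorem~\ref{thm:shell components}(b), so it lies on the outer boundary of $\cals$ (the common boundary with $\calm_\la \cup \calm_\mu$) rather than as an interior missing point, and thus contributes only to the topological boundary of $\cals$, not to an interior puncture.
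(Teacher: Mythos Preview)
Your overall strategy---show $\cals_\la$ is an annulus via Theorem~\ref{calsla}, transport to $\cals_\mu$ by the involution $I$, and glue along $\cals_*$---is exactly the paper's approach. However, there is a genuine error in your execution: you assert that $I$ fixes $\cals_*$ \emph{pointwise}, and your gluing argument hinges on this (you identify the two boundary circles ``via the identity'' and claim $E$ and $\tilde E$ match pointwise on $\cals_*$). Proposition~\ref{symsstar} only says $\cals_*$ is \emph{setwise} invariant under $I$. Indeed $I$ is a M\"obius transformation with exactly two fixed points, $0$ and $\rho$, so it cannot fix any circle pointwise. The paper even notes that for non-real $\rho$ the invariant circle $C_0$ of $I$ and the curve $\cals_*$ need not coincide.

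The fix is easy: instead of gluing the two copies of $\partial\Delta\setminus\{\la_0\}$ by the identity, glue them by the homeomorphism $E\circ I\circ E^{-1}$. Since $I(0)=0$ and the boundary extension of $E$ sends $0$ to $\la_0$, this gluing map does match up the punctures, so the result is still two half-open annuli joined along a punctured circle, hence an open annulus with one interior point removed. Alternatively---and this is what the paper actually does---drop the explicit $\Phi$ entirely and argue purely topologically: $\cals_\la$ is an annulus, $I$ carries it homeomorphically to $\cals_\mu$, the two share the boundary circle $\cals_*\cup\{0\}$, so $\cals\cup\{0\}$ is an annulus and removing the singular parameter $0$ leaves a punctured annulus. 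This avoids any need for pointwise compatibility of $E$ and $\tilde E$.
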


\begin{proof}   We begin by  recalling the relation between $\calm_{\la}$ and $\calm_{\mu}$ given by the inversion $I(\la)= -\mu$  defined in section~\ref{fatoucomps}  that shows 
\[ f_{\la}(z)= f_{-\mu}(-z). \]
It follows that if $\la \in \calm_{\la}$ and $f_{\la}^m(\la)$ tends to a periodic orbit $\mathbf z = \{z_0, z_1, \ldots, z_n \}$ then												
$f_{-\mu}^m(-\mu)$ tends to the orbit $-\mathbf z = \{-z_0, -z_1, \ldots, -z_n \}$ and $-\mu \in \calm_{\mu}$.    This proves

\begin{prop}\label{inversion} The inversion $I: \calm_{\la} \rightarrow \calm_{\mu}$ defined by
\[ I(\la)=-\mu = \frac{\la}{2\la/\rho -1}  \]  maps shell components of  period $n$ in $\calm_{\la}$ to shell components of period $n$ in $\calm_{\mu}$.
\end{prop}

This is illustrated in figure~\ref{lambda and rho}.  The large green region is the shift locus,  $\calm_{\la}$ is the complementary region on the right and $\calm_{\mu}$ is the complementary region to the left, surrounded by the shift locus.  
The circle of inversion is drawn in   figure~\ref{lambda and rho} where $\rho=2/3$.  In this figure, since $\rho$ is real, by proposition~\ref{cals circle}, it is $\cals_*$.  For arbitrary fixed $\rho$, $\cals$ is the image of $\partial \Delta \setminus \{\la_{0}\}$ under $E^{-1}$; thus  $\cals_*\cup \{0\}$, which we still denote as $\cals_*$, is a topological circle. 

\begin{figure}
  \includegraphics[width=2in]{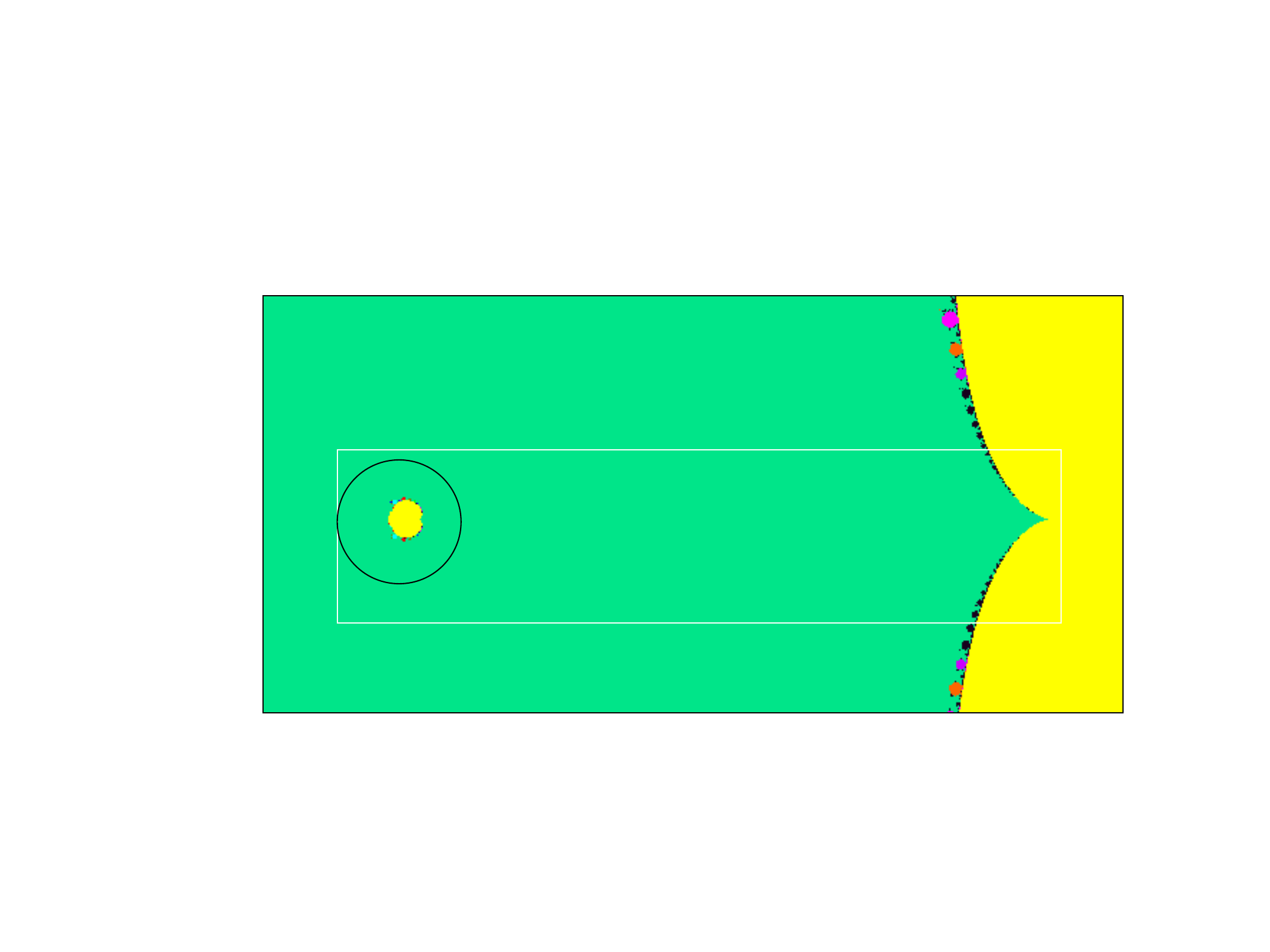}
  \caption{The $\la$ plane with the regions $\calm_{\la}$,  $\calm_{\mu}$ and the circle of inversion.}\label{lambda and rho}
  \label{circleofinversion}
\end{figure}

In theorem~\ref{calsla} we saw that $\cals_{\la}$ is homeomorphic to an annulus.  One of the complementary components is $\calm_{\la}$.   The other complementary component is bounded by the curve $\cals_*$.  By proposition~\ref{inversion},  $I$ maps $\cals_{\la} \cup \calm_{\la}$ to $\cals_{\mu} \cup \calm_{\mu}$;  since $I(\calm_{\la})=\calm_{\mu}$, $I(\cals_{\la})=\cals_{\mu}$ so that $\cals_{\mu}$ is also an annulus.    Because $I$ maps $\cals_*$ to itself, these annuli share a common boundary component.  

Note that although both the invariant circle of inversion $\calc_0$ and $\cals_*$ are invariant under inversion, unless $\rho$ is real,  they are not necessarily the same. 

Therefore $\cals \cup \{0\}=\cals_{\la} \cup \cals_{\mu} \cup \cals_* \cup \{ 0\}$ is topologically an annulus.
  Removing the parameter singularity $\la=0$  completes the proof.
\end{proof}

Immediate corollaries  of this theorem are:
\begin{cor} The sets $\calm_{\la}$ and $\calm_{\mu}$ are connected. \end{cor}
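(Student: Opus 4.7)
The plan is to deduce the corollary directly from the topology of the shift locus established in Theorem~\ref{thm:shift locus}, together with the decomposition provided in its proof. Recall that in that proof, $\cals \cup \{0\}$ was realized in $\hat\CC$ as the topological open annulus $\cals_\la \cup \cals_* \cup \cals_\mu$, obtained by gluing the two open annuli $\cals_\la$ and $\cals_\mu$ along their common boundary circle $\cals_*$. Its two free boundary components in $\hat\CC$ are therefore disjoint topological Jordan curves, namely the outer boundaries of $\cals_\la$ and $\cals_\mu$, which coincide with $\partial \cals \cap \partial \calm_\la$ and $\partial \cals \cap \partial \calm_\mu$ respectively.

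Next, I would argue that since $\cals \cup \{0\}$ is a topological open annulus embedded in the sphere $\hat\CC$, the complement $\hat\CC \setminus \overline{\cals \cup \{0\}}$ has exactly two connected components, each homeomorphic to an open disk; call them $D_\la$ and $D_\mu$. By the trichotomy recalled in Subsection~\ref{fatoucomps}, every $\la$ in the parameter plane belongs to exactly one of $\cals$, $\calm_\la$, or $\calm_\mu$, so $\calm_\la \cup \calm_\mu = (D_\la \cup D_\mu) \setminus \{\infty, \rho/2\}$. Theorem~\ref{thm:shell components}(b) tells us that $\calm_\la$ is unbounded with period-$1$ virtual center at $\infty$, while $\calm_\mu$ is bounded with period-$1$ virtual center at $\rho/2$; this lets me identify $\calm_\la = D_\la \setminus \{\infty\}$ and $\calm_\mu = D_\mu \setminus \{\rho/2\}$.

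Each of these is then an open topological disk with a single interior point removed, hence a topological open annulus, which is connected. The main thing to check, and the only mildly subtle point in the plan, will be that $\infty$ and $\rho/2$ are indeed \emph{interior} points of $D_\la$ and $D_\mu$ rather than boundary points; this is clear because the Jordan boundary curves of $D_\la$ and $D_\mu$ coincide with $\partial \cals$, and $\partial \cals$ does not meet the parameter singularities $\{0, \rho/2, \infty\}$ (indeed, $\rho/2$ is a virtual center on $\partial \calm_\mu$, disjoint from the common boundary between $\cals$ and $\calm_\mu$, and analogously for $\infty$).
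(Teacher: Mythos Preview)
Your overall strategy matches the paper's: deduce connectedness of $\calm_\la$ and $\calm_\mu$ from the fact that $\cals\cup\{0\}$ is an open annulus in $\hat\CC$, so that its complement has exactly two connected components. However, the execution contains a genuine error that breaks the argument.

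The problem is that you pass to the complement of the \emph{closure} $\overline{\cals\cup\{0\}}$ rather than of the open set $\cals\cup\{0\}$ itself. By the trichotomy, the boundary $\partial\cals$ lies entirely inside $\calm_\la\cup\calm_\mu$, so
\[
\hat\CC\setminus\overline{\cals\cup\{0\}}=\big(\calm_\la\cup\calm_\mu\cup\{\rho/2,\infty\}\big)\setminus\partial\cals
\]
is the \emph{interior} of $\calm_\la\cup\calm_\mu\cup\{\rho/2,\infty\}$ in $\hat\CC$, not the whole set. This interior is essentially the union of the shell components $\calm_\la^0\cup\calm_\mu^0$, which has infinitely many connected components, not two; and your asserted identity $\calm_\la\cup\calm_\mu=(D_\la\cup D_\mu)\setminus\{\infty,\rho/2\}$ is false for the same reason. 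Relatedly, your claim that $\partial\cals$ avoids $\{\rho/2,\infty\}$ contradicts the paper: the Main Structure Theorem explicitly places $\rho/2$ on $\partial\cals$, and the proof of Lemma~\ref{invwelldef} shows $\infty\in\partial\cals_\la$.

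The fix is to work with the \emph{closed} complement $\hat\CC\setminus(\cals\cup\{0\})=\calm_\la\cup\calm_\mu\cup\{\rho/2,\infty\}$. Since $\cals\cup\{0\}$ is an open annulus in the sphere, this closed complement has exactly two connected components. Both $\calm_\la$ and $\calm_\mu$ are closed in the parameter plane (if $\la_n\in\calm_\la$ and $\la_n\to\la$, then $\la$ cannot lie in the open basin of $0$, so $\la\in\calm_\la$), and the inversion $I$ interchanges $\calm_\la\leftrightarrow\calm_\mu$, interchanges the two complementary components (since it swaps $\cals_\la\leftrightarrow\cals_\mu$ across the separating curve $\cals_*\cup\{0\}$), and sends $\infty\leftrightarrow\rho/2$. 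This forces $\calm_\la\cup\{\infty\}$ and $\calm_\mu\cup\{\rho/2\}$ to be precisely the two components, giving connectedness. No claim that the components are disks, or that the singular parameters avoid $\partial\cals$, is needed.
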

\begin{cor} The full shift locus in $\calf_2$ has the product structure $\DD^* \times \CC \setminus \{0,1 \}$. \end{cor}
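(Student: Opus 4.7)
The plan is to upgrade the slice homeomorphism $\Phi_\rho : \cals_\rho \to \CC \setminus \{0,1,\infty\}$ of Theorem~\ref{thm:shift locus} to a homeomorphism that depends continuously on $\rho \in \DD^*$, so that the total space $\cals$ is trivialized as a fiber bundle over $\DD^*$. Since the thrice-punctured sphere is homeomorphic to $\CC \setminus \{0,1\}$, this yields the claimed product structure. The key observation is that every ingredient in the construction of $\Phi_\rho$ in fact varies holomorphically in $\rho$, so continuity is automatic once the construction is carried out parametrically.

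First I would choose, for each $\rho \in \DD^*$, a basepoint $\la_0(\rho) \in \Omega_1(\rho) \cap \calm_{\la}(\rho)$ with $f_{\la_0(\rho)}'(q_0(\rho)) = \rho$. Since the multiplier on $\Omega_1(\rho)$ is a holomorphic universal covering of $\DD^*$ and the family $f_{\la,\rho}$ is jointly holomorphic in $(\la,\rho)$, a local holomorphic section exists; the deck-group ambiguity does not affect the final homeomorphism because the construction only uses $\la_0(\rho)$ through quantities that descend to the multiplier. The attracting fixed point $q_0(\rho)$, its basin $K_0(\rho)$, the normalized linearizer $\phi_0(z,\rho)$, the radius $r(\rho) = |\phi_0(\la_0(\rho),\rho)|$, and the disk $\Delta(\rho)$ then all depend holomorphically on $\rho$, giving a continuous trivialization of the model annulus $K_0(\rho) \setminus \Delta(\rho)$ over $\DD^*$.

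Next, the formula $E_\rho(\la) = \phi_0(\cdot,\rho)^{-1} \circ \phi_\la(\la, \rho)$ from Theorem~\ref{calsla} exhibits $E_\rho$ as a holomorphic function of $(\la, \rho)$ on $\cals_{\la}^0 \subset \calf_2$. The inverse $E_\rho^{-1}$ was constructed by quasiconformal surgery: a Beltrami differential on $A_\la^*$ is built from the topological data of a twice-punctured torus of modulus $\rho$, and the Measurable Riemann Mapping Theorem produces a normalized quasiconformal conjugacy. By the Ahlfors--Bers parametric version, when the Beltrami coefficients depend holomorphically on the parameter, so do the normalized solutions; hence $E_\rho^{-1}$ depends jointly continuously (indeed holomorphically) on $(p,\rho)$. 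The boundary extension to $\cals_*(\rho)$ with $E_\rho^{-1}(\la_0(\rho)) = 0$ inherits continuity in $\rho$ because $\partial \Delta(\rho)$ moves holomorphically, and the inversion $I_\rho(\la) = \la/(2\la/\rho - 1)$ identifying $\cals_{\la}$ with $\cals_{\mu}$ across $\cals_*$ is manifestly holomorphic in $(\la,\rho)$.

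Assembling these ingredients gives a map $\Phi : \cals \to \DD^* \times (\CC \setminus \{0,1\})$, $(\la,\rho) \mapsto (\rho, \Phi_\rho(\la))$, which is a homeomorphism by the slice theorem on each fiber and continuous in $\rho$ by the parametric quasiconformal theory; the inverse is continuous for the same reason. The main obstacle I anticipate is ensuring the marking of asymptotic values and the labelling of inverse branches remain globally coherent in $\rho$, so that the slice identifications glue into a genuine trivialization and not just a local product; this can be handled by restricting to a fundamental domain for the multiplier covering of $\Omega_1(\rho)$ and invoking the natural equivariance of the construction under the deck group, exactly as in the rational case of~\cite{GK}.
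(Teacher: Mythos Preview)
The paper provides no proof of this corollary; it is listed among the ``immediate corollaries'' of Theorem~\ref{thm:shift locus} with no argument. Your proposal fills in what the paper omits and is essentially correct: making the slice homeomorphism depend continuously on $\rho$ is what trivializes the full shift locus as a bundle over $\DD^*$, and your observation that each ingredient (the linearizers $\phi_\la$ and $\phi_0$, the surgery data entering $E_\rho^{-1}$ via the parametric Measurable Riemann Mapping Theorem, and the inversion $I_\rho$) is jointly holomorphic in $(\la,\rho)$ is exactly the right mechanism for local triviality.

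One point deserves care. Your assertion that ``the construction only uses $\la_0(\rho)$ through quantities that descend to the multiplier'' is not quite accurate as stated: the model space $K_0(\rho)\setminus\Delta(\rho)$, and hence the map $E_\rho$, genuinely depends on which preimage $\la_0$ of $\rho$ under the multiplier covering $\nu:\Omega_1\to\DD^*$ one selects. What rescues the argument---and what the paper presumably has in mind by calling the corollary ``immediate''---is a purely topological fact: the three ends of each fiber $\cals_\rho$ (the puncture at $0$, the boundary of $\calm_\la$, and the boundary of $\calm_\mu$) are globally distinguished, so the structure group of the bundle reduces to homeomorphisms of the thrice-punctured sphere fixing each puncture. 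The pure mapping class group of the thrice-punctured sphere is trivial, so any two local trivializations differ by maps isotopic to the identity and therefore glue to a global product. Your parametric construction supplies the local trivializations; this observation disposes of the global coherence issue without needing to track the dependence on $\la_0$ or invoke equivariance under the deck group.
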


\begin{figure}
 \centering
  \includegraphics[width=4in]{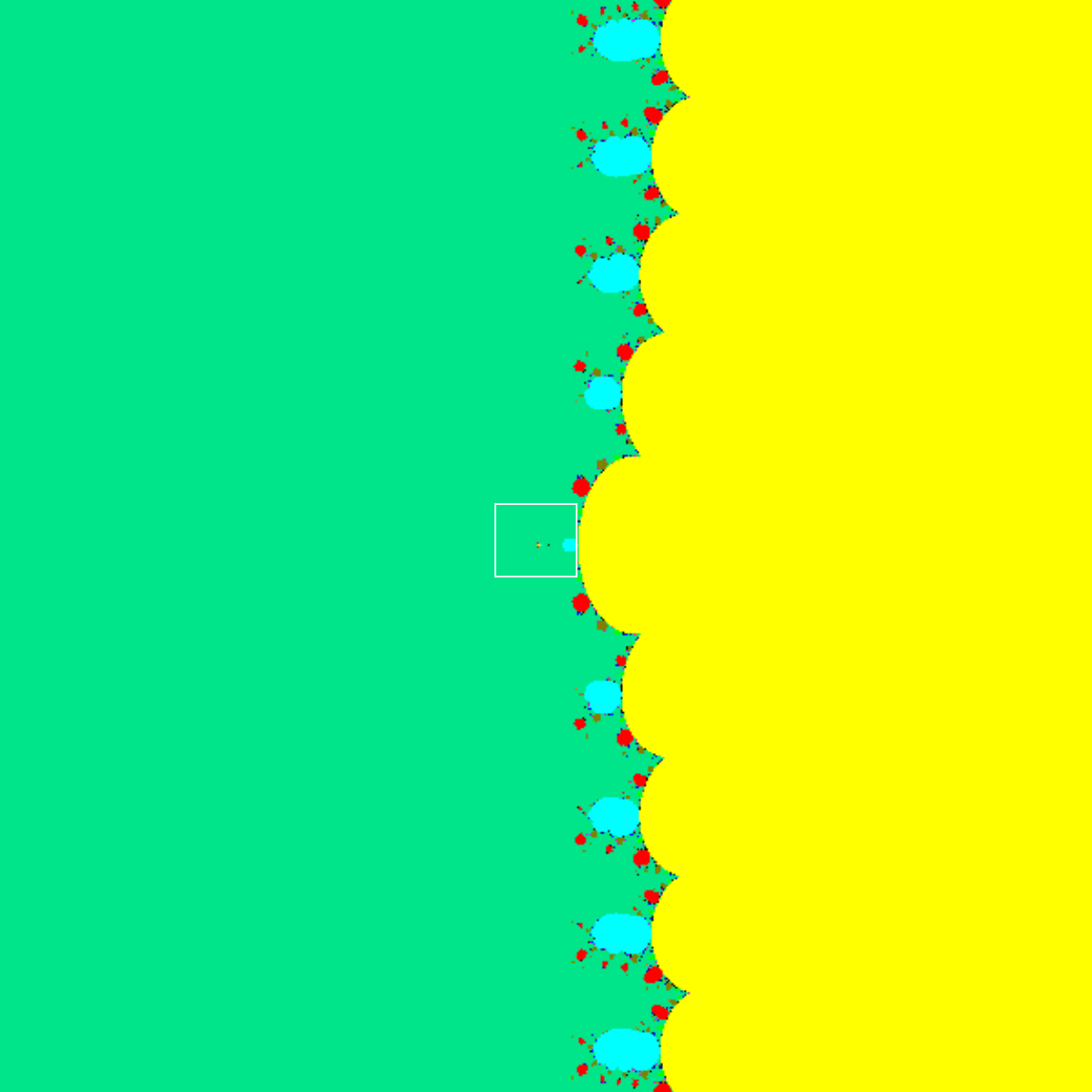}
  \caption{ The $\la$ plane when $\rho=-2/3$. Note the position of the period $2$ components. }\label{rho-2/3}
\end{figure}

 Figure(\ref{rho-2/3}) shows the $\la$ plane when $\rho=-2/3$. This is another slice in the fibration and shows how the fibers change as the argument of $\rho$ changes.   The picture is similar to figure~(\ref{lambdaplane}) except that we see that the $\calm_{\la}$ is translated vertically and there is a period $2$ component budding off $\Omega_1$ on the real axis instead of a cusp.

 \subsection{Single valued inverse branches}\label{Inverse branches}

We   now prove the lemma we assumed for the proof of the Combinatorial Structure Theorem in section~\ref{combinatorics}
\begin{lemma}\label{invwelldef}  There is a simply connected domain $\Sigma \in \CC \setminus \{0, \rho/2\}$ in which, after a choice of basepoint and branch of the logarithm,  the pole functions $p_k(\la)$ and the inverse branches $g_{\la,k}$ can be defined as single valued functions of $\la$.
\end{lemma}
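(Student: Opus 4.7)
The plan is to construct $\Sigma$ by removing from $\CC$ a simple arc $\ell$ joining the two parameter singularities $0$ and $\rho/2$, chosen so that $\ell$ lies in the closure of $\calm_\mu \cup \cals$ and therefore does not meet $\calm_\la$. By the Main Structure Theorem (in particular Theorem~\ref{thm:shift locus}), the shift locus $\cals$ is homeomorphic to a punctured annulus whose puncture is at $\la=0$, and the parameter singularity $\rho/2$ lies on the inner boundary $\partial\cals\cap\partial\calm_\mu$. The connected set $\calm_\mu \cup \cals \cup \{0,\rho/2\}$ contains both endpoints $0$ and $\rho/2$, so a simple arc $\ell$ between them may be chosen inside it; setting $\Sigma=\CC\setminus\ell$ yields a simply connected domain contained in $\CC\setminus\{0,\rho/2\}$ and containing $\calm_\la$.

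Next, one verifies that the pole functions $p_k(\la)$ are single-valued on $\Sigma$. The function $w(\la)=(\rho-2\la)/\rho$ is holomorphic on $\Sigma$ with its unique zero at $\rho/2 \notin \Sigma$, so $w$ is nowhere vanishing on $\Sigma$. Since $\Sigma$ is simply connected, a single-valued holomorphic branch of $\Log w(\la)$ exists; fixing a basepoint $\la_0\in\Sigma$ and specifying the value of $\Log w(\la_0)$ determines this branch uniquely. The formula $p_k(\la)=\tfrac{1}{2}\Log w(\la)+ik\pi$ then gives a single-valued enumeration of all the poles of $f_\la$ for $k\in\ZZ$.

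Finally, for the inverse branches, fix the same basepoint $\la_0$ and label $\{g_{\la_0,k}\}_{k\in\ZZ}$ of the inverse branches of $f_{\la_0}$ by the condition $g_{\la_0,k}(\infty)=p_k(\la_0)$. For any $\la\in\Sigma$, extend this labeling along a path from $\la_0$ to $\la$ in $\Sigma$: along such a path the poles $p_k(\la')$ move continuously by the previous step, and each inverse branch $g_{\la',k}$ is transported by analytic continuation. Simple connectivity of $\Sigma$ forces independence of path, so each $g_{\la,k}$ is well defined as a single-valued function of $\la\in\Sigma$. The main obstacle is the first step, constructing the arc $\ell$: it requires assembling the topological information supplied by the Main Structure Theorem (the annular structure of $\cals$, the location of $0$ as its puncture, the location of $\rho/2$ on its inner boundary, and the fact that $\calm_\la$ is separated from these by $\cals$) to confirm that a simple arc in $\calm_\mu\cup\cals\cup\{0,\rho/2\}$ joining the two singularities exists and avoids $\calm_\la$. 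Once the arc is in place, the remaining steps are routine consequences of the monodromy principle on simply connected domains.
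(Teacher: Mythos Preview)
Your proof is correct for the lemma as stated, but it takes a genuinely different route from the paper. You remove a single simple arc $\ell$ joining the two parameter singularities $0$ and $\rho/2$ (routed through $\cals\cup\{0,\rho/2\}$ so as to miss $\calm_\la$) and set $\Sigma=\CC\setminus\ell$. The paper instead removes the entire topological disk $W$ bounded by $\cals_*$ that contains $\calm_\mu$ (and hence $\rho/2$), together with a curve $\gamma\subset\cals_\la$ running from $0$ out to $\infty$, and takes $\Sigma=\CC\setminus(W\cup\gamma)$. Both constructions rely on the same structural input, Theorem~\ref{thm:shift locus}, to place the cut away from $\calm_\la$, and in both cases the argument concludes identically: on the resulting simply connected domain $(\rho-2\la)/\rho$ is nonvanishing, so a branch of its logarithm exists, the pole functions $p_k(\la)$ are single-valued, and the inverse branches $g_{\la,k}$ propagate by analytic continuation with no monodromy.

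Your construction is more economical; it removes only an arc. The price is that your $\Sigma$ still contains essentially all of $\calm_\mu$, whereas the paper's $\Sigma$ is chosen so that it contains $\calm_\la$ and \emph{excludes} $\calm_\mu$ entirely. This extra property is not part of the lemma's statement, but it is how the paper describes $\Sigma$ in Section~\ref{combinatorics} (``containing $\calm_\la$ and not containing $\calm_\mu$''), so that the labelling scheme there refers only to virtual cycle parameters on the $\calm_\la$ side. If you want your $\Sigma$ to match that description as well, you could simply enlarge your removed set by throwing in all of $\calm_\mu$ (or the disk it bounds), which brings you essentially to the paper's construction.
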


\begin{proof} In the proof of theorem~\ref{thm:shift locus} we showed that $\cals_{\mu}$ and $\cals_{\la}$ are homeomorphic to annuli with a common boundary component that contains the singularity at the origin.   
It follows that in a neighborhood of the origin both asymptotic values are attracted to zero.

Now consider the period one shell component $\Omega_1'$ of $\calm_{\mu}$.  It has a virtual center at $\la=\rho/2$.    Since it is a virtual center, it is on the boundary of both $\calm_{\mu}$ and $\cals$ and so a neighborhood $V$ of $\rho/2$ contains points in $\cals_{\mu}$.

Applying the inversion,  $I(V)$ is a neighborhood of infinity intersecting the period one component $\Omega_1 \in \cals_{\la}$ and an open set in $\cals_{\la}$.  Thus  infinity is on the boundary   $\cals_{\la}$. Since a neighborhood of any point in $\cals_{*}$ only contains points in $\cals_{\la}$ and $\cals_{\mu}$, infinity and $0$ are on different boundary components.  Hence because $\cals_{\la}$ is an annulus,   we can find a curve   $\gamma \subset \cals_{\la}$ joining $0$ and infinity.    Let   $W$ be the component of $\CC \setminus \cals_{*}$ containing 
$\calm_{\mu}$ and set   $\Sigma = \CC \setminus (W \cup \gamma )$.  This is a simply connected domain.  It contains all the virtual cycle parameters belonging to $\calm_{\la}$ and none of the virtual cycle parameters belonging to $\calm_{\mu}$. Therefore, choosing a basepoint $\la_0 \in \Sigma$, and a branch for $\Log$, we can define  $p_k(\la_0)$ as in equation~(\ref{poles}) by
\[
 p_k(\lambda_0)=\frac{1}{2}\Log (\frac{\rho-2\lambda_0}{\rho})+ik\pi, 
\]
and extend analytically to all of $\Sigma$ as single valued functions of $\la$.  Then, as we did in section~\ref{combinatorics}, we can define the inverse branches of $f_{\la}$ as single valued functions of $\la$. 
\end{proof}

 \section{Concluding Remarks}
 \label{conclusion}
 There are many more questions one can address about the space of functions we have been studying.  Below we list some of them and leave an investigation of them to future work.

\begin{itemize}
\item
An important tool in studying the Mandelbrot set is the use of the level curves where the escape rate of the critical value is constant and their gradient ``external rays''.    Can we define the analogue for the set $\cals_{\la}$ and $\cals_{\mu}$ using the level curves of $\phi_0$ defined on $K_0$?  There will be infinitely many curves for each level so the structure will be much more complicated.   This would lead to more questions such as
\begin{enumerate}[i-]
\item  In \cite{CJK21}, we used the level curves and their gradients to prove that the virtual centers are accessible points from inside both the shell components and the shift locus.  Can we also  use it to  characterize  other  types of boundary points of $\cals$ such as  cusps,   root points for bud components or Misiurewicz points where an asymptotic value lands on a repelling cycle. 
\item Can we describe primitive and satellite components in terms of rays in a manner analogous to the discussion for rational maps.
\item In \cite{CJK19} we showed there is a renormalization operator defined for the family $it \tan z$ where $t$ is real.  Are there renormalization operators that can be defined in $\calf_2$? 
\end{enumerate}
\item
 We know that at the virtual centers and Misiurewicz points the only Fatou component is the attracting basin of the origin.   Is the Julia set a Cantor bouquet in the sense of Devaney?  Does it have positive measure? area?
\item  In \cite{GK} the mapping class group of the Teichm\"uller space $Rat_2$ is analyzed.  The analogous space here is $\calf_2$ from which points with orbit relations have been removed.   Describe the mapping class group of this space.
\item  How do the results here extend to  parameter spaces of families of meromorphic functions with more than two asymptotic values, or those with both critical values and asymptotic values.
\end{itemize}

\vspace*{20pt}
\noindent Tao Chen, Department of Mathematics, Engineering and Computer Science,
Laguardia Community College, CUNY,
31-10 Thomson Ave. Long Island City, NY 11101.
Email: tchen@lagcc.cuny.edu

\vspace*{5pt}
\noindent Yunping Jiang, Department of Mathematics, Queens College of CUNY,
Flushing, NY 11367 and Department of Mathematics, CUNY Graduate
School, New York, NY 10016
Email: yunping.jiang@qc.cuny.edu

\vspace*{5pt}
\noindent Linda Keen, Department of Mathematics,  CUNY Graduate
School, New York, NY 10016,
Email: LKeen@gc.cuny.edu; linda.keenbrezin@gmail.com

\end{document}